\newcommand{\Manoa}{M\=anoa }
\newcommand{\Hawaii}{Hawai\kern.05em`\kern.05em\relax i }
\newtheorem{theorem}{Theorem}[section]
\newtheorem{corollary}[theorem]{Corollary}
\newtheorem{lemma}[theorem]{Lemma}
\newtheorem{claim}[theorem]{Claim}
\newtheorem{proposition}[theorem]{Proposition}
\theoremstyle{definition}
\newtheorem{definition}[theorem]{Definition}
\newtheorem{example}[theorem]{Example}
\newtheorem*{definition*}{Definition}
\newtheorem{remark}[theorem]{Remark}
\newtheorem*{lemma*}{Lemma}
\newtheorem*{proposition*}{Proposition}
\newtheorem*{theorem*}{Theorem}
\newtheorem*{corollary*}{Corollary}
\newtheorem*{claim*}{Claim}
\theoremstyle{definition}
\newtheorem{question}[theorem]{Question}
\newcommand{\im}{\operatorname{im}}
\newcommand{\Hom}{\operatorname{Hom}}
\newcommand{\Homeo}{\operatorname{Homeo}}
\newcommand{\id}{\operatorname{id}}
\newcommand{\HH}{\operatorname{H}}
\newcommand{\EH}{\operatorname{EH}}
\newcommand{\CC}{\operatorname{C}}
\newcommand{\ZZ}{\operatorname{Z}}
\newcommand{\BB}{\operatorname{B}}
\newcommand{\cl}{\operatorname{cl}}
\newcommand{\scl}{\operatorname{scl}}
\newcommand{\rot}{\operatorname{rot}}
\newcommand{\sv}{\operatorname{SV}}
\begin{document}

\title{Second bounded cohomology of groups acting on $1$-manifolds and applications to spectrum problems}
\author{Francesco Fournier-Facio and Yash Lodha}
\date{\today}
\maketitle

\begin{abstract}
We prove a general criterion for the vanishing of second bounded cohomology (with trivial real coefficients) for groups that admit an action satisfying certain mild hypotheses.
This leads to new computations of the second bounded cohomology for a large class of groups of homeomorphisms of $1$-manifolds, and a plethora of applications. First, we demonstrate that the finitely presented and nonamenable group $G_0$ constructed by the second author with Justin Moore satisfies that every subgroup has vanishing second bounded cohomology. This provides the first solution to a homological version of the von Neumann--Day Problem, posed by Calegari.
Next, we develop a technical refinement of our criterion to demonstrate the existence of finitely generated non-indicable (even simple) left orderable groups with vanishing second bounded cohomology. This answers Question 8 from the 2018 ICM proceedings article of Navas.
Then we provide the first examples of finitely presented groups whose spectrum of stable commutator length contains algebraic irrationals, answering a question of Calegari. 
Finally, we provide the first examples of manifolds whose simplicial volumes are algebraic and irrational, as further evidence towards a conjecture of Heuer and L\"{o}h.
\end{abstract}

\section{Introduction}
Two intricately related invariants that appear in modern group theory are \emph{stable commutator length} and \emph{bounded cohomology}.

Stable commutator length (usually referred to as $\scl$) is a real-valued invariant of groups that allows for algebraic, topological, and analytic descriptions, and has several applications in topology and geometry \cite{calegari}. 
Consider a group $G$. The \emph{commutator length} of an element $g$ in the commutator subgroup $G'$, denoted as $\cl(g)$, is the smallest $n\in \mathbf{N}$ such that $g$ can be expressed as a product of at most $n$ commutators of elements in $G$. Then we define the \emph{stable commutator length} of $g$ as $\scl(g) := \lim_{n\to \infty} \frac{\cl(g^n)}{n}$.
If there exists $k \geq 1$ such that $g^k \in G'$, then we set $\scl(g) := \frac{\scl(g^k)}{k}$. Otherwise, we set $\scl(g) := \infty$.
Since the function given by $n\to \cl(g^n)$ is subadditive, this limit always exists. However, in general, it is hard to compute.
Given a group $G$, we denote $\scl(G) := \{\scl(g)\mid g\in G\}$.

\emph{Bounded cohomology} of groups is defined analogously to standard group cohomology, but taking the topological dual of the simplicial resolution instead of the algebraic dual. This invariant was introduced by Johnson and Trauber in the context of Banach algebras \cite{johnson}, who proved that it vanishes in all positive degrees for amenable groups. Since then, it has become a fundamental tool in several fields \cite{BC, monod}, most notably the geometry of manifolds \cite{gromov} and rigidity theory \cite{rigidity}.
Given a group $G$, its second bounded cohomology with trivial real coefficients $\HH^2_b(G; \mathbf{R})$ is of particular interest in the context of rigidity results. For instance, it is related in a strong way to actions on the circle \cite{ghys}, and a vanishing result for high-rank lattices \cite{lattices, rigidity} leads to a proof of superrigidity for mapping class groups \cite{mcg}.
The vanishing of the second bounded cohomology is often viewed as a weaker form of amenability.

The two invariants are intimately related.
The connection is given by a fundamental result called Bavard Duality \cite{bavard}. This allows one to compute stable commutator length by means of \emph{quasimorphisms}, which represent certain classes in $\HH^2_b(G; \mathbf{R})$. A special case of this states that the stable commutator length of $G$ vanishes identically on $G'$ if and only if a natural map $\HH^2_b(G; \mathbf{R}) \to \HH^2(G; \mathbf{R})$ is injective. \\

In this paper we address a family of closely interconnected questions concerning stable commutator length and bounded cohomology.
Our first result establishes a rather general criterion that allows one to compute the second bounded cohomology of several transformation groups. To state our criterion, we need the following definition.

\begin{definition}
\label{def:cc}
Let $G$ be a group. We say that $G$ has \emph{commuting conjugates} if for every finitely generated subgroup $H \leq G$, there exists an element $f \in G$ such that $[H, H^{f}] = \id$.
\end{definition}

\begin{theorem}
\label{thm:main}
Let $G$ be a group with commuting conjugates. Then $\HH^2_b(G; \mathbf{R}) = 0$.
\end{theorem}

Theorem \ref{thm:main} has several nice consequences for the group $G$. It implies that that the stable commutator length vanishes on all of $G'$\footnote{After a talk about this work, Jonathan Bowden pointed out to us that the result establishing the vanishing of scl was already known \cite[Proposition 2.2]{kotschick}.}, that every minimal action on the circle is conjugate to an action by rotations \cite{matsumoto} (see also \cite[Corollary 10.28]{BC}), as well as further rigidity results that we will soon mention. \\

This criterion applies to large families of boundedly supported transformation groups. Together with known combination results, we obtain the following theorem, which is the starting point of our main applications:

\begin{theorem}\label{thm:PLPP}
We have $\HH^2_b(G; \mathbf{R}) = 0$, when $G$ is one of the following:
\begin{enumerate}
\item A group of orientation-preserving piecewise linear homeomorphisms of the interval, or piecewise projective homeomorphisms of the real line;

\item A chain group of homeomorphisms (as defined in \cite{chain}), or a group that admits a coherent action on the real line (as defined in \cite{coherent}).
\end{enumerate}
\end{theorem}

As a first application, we settle a \emph{homological von Neumann--Day problem}: \emph{Do there exist nonamenable groups, all of whose subgroups have vanishing second bounded cohomology (with trivial real coefficients)?} In \cite{scl_pl}, Calegari states a related conjecture which asserts: \emph{If $G$ is a finitely presented, torsion free group with the property that every subgroup has vanishing stable commutator length, then $G$ is amenable.} Applying Theorem \ref{thm:PLPP} to the nonamenable group of piecewise projective homeomorphisms $G_0$ constructed by the second author with Moore \cite{LodhaMoore} disproves the conjecture as well as settles the problem.


We wish to emphasise that Theorem \ref{thm:PLPP} is a new result even for the special case of subgroups of Thompson's group $F$.
As a consequence, it provides a new obstruction towards the embeddability of a given group in Thompson's group $F$, and more generally in the group of piecewise linear (or piecewise projective) homeomorphisms of the interval (real line).
The subgroup structure of these groups is very mysterious (see e.g. \cite{subgroups1, subgroups2, subgroups3}), and our theorem provides new insight in this direction.
We also remark that as corollaries of Theorem \ref{thm:PLPP}, we recover the theorem of Calegari which asserts that $\scl$ vanishes for any group of piecewise linear homeomorphisms of the unit interval \cite{scl_pl}, as well as the Brin--Squier Theorem which asserts that such groups do not contain nonabelian free subgroups \cite{BrinSquier}.
To our knowledge, these were the only meaningful structural results that were previously known to hold for \emph{all} piecewise linear groups of homeomorphisms of the interval.\\

Our next application is in the theory of left orderable groups \cite{orders}. A group $G$ is said to be \emph{left orderable} if it admits a total order which is invariant under left multiplication.
This notion has a beautiful connection with dynamics of group actions on the line: a countable group $G$ is left orderable if and only if it admits a faithful action by orientation-preserving homeomorphisms on the real line. Two closely related algebraic notions are the following. A group is \emph{indicable} if it admits a surjection onto $\mathbf{Z}$, and \emph{locally indicable} if every finitely generated subgroup is indicable. Note that for a finitely generated group, local indicability implies indicability.

It is well known that locally indicable groups are left orderable, yet the converse fails. However, the converse holds in certain situations.
A fundamental theorem in this direction is Witte-Morris's Theorem \cite{WM}: amenable left orderable groups are locally indicable. Equivalently, finitely generated amenable left orderable groups are indicable. 
In his $2018$ ICM proceedings article \cite{questions}, Andr\'{e}s Navas uses this theorem as a starting point for a research program consisting of a list of questions about left orderable groups which are reminiscent of the Tits alternative. The general theme is: Are some of the properties weaker than amenability enough to imply indicability of a finitely generated left orderable group?

\begin{question}[{\cite[Question 8]{questions}}]
\label{qNavas}
Does there exist a finitely generated, non-indicable, left orderable group $G$ such that $\HH^2_b(G; \mathbf{R}) = 0$?
\end{question}

There are two key difficulties in attempting to resolve Question \ref{qNavas}. 
The first is the hypothesis of finite generation. Indeed, using the construction of Mather \cite{Mather} it is easy to show that every  countable left orderable group embeds in a countable left orderable perfect group which has vanishing bounded cohomology in every positive degree \cite{bcbinate}. But such groups are not finitely generatable.
When restricting to finitely generated groups, a further difficulty in Question \ref{qNavas} lies in the requirement that the finitely generated group must be \emph{non-indicable}, which for finitely generated groups is stronger that \emph{non-locally indicable}. In other words, the witness to the failure of local indicability is required to be $G$ itself, and not some finitely generated subgroup thereof. Indeed, non-locally indicable groups with vanishing second bounded cohomology are easily constructed using Theorem \ref{thm:PLPP}, and the fact that every finitely generated left orderable group embeds into a chain group \cite[Theorem 1.4]{chain}. Yet, they do not answer Question \ref{qNavas}.

In \cite{grho} the second author and Hyde constructed the first family of finitely generated simple left orderable groups. The construction takes as input a so-called \emph{quasi-periodic labelling} $\rho$ and outputs a finitely generated simple left orderable group $G_\rho$ (see Section \ref{s:grho} for more detail). Note that the finitely generated groups $G_{\rho}$ are non-indicable, being simple. Thus our theorem below shows that the groups $G_{\rho}$ provide a positive answer to Question \ref{qNavas}:

\begin{theorem}
\label{theorem:mainNavas}
Let $\rho$ be a quasi-periodic labelling. Then $\HH^2_b(G_{\rho}; \mathbf{R}) = 0$.
\end{theorem}

We wish to emphasise here that our solution to Navas's question is even more striking owing to the fact that the groups $G_{\rho}$ are simple, which is much stronger than being non-indicable. 
The proof of Theorem \ref{theorem:mainNavas} relies on a technical refinement of the ingredients of Theorem \ref{thm:main}, since as such it is far from being directly applicable to $G_{\rho}$.
We also recover the fact that the groups $G_\rho$ are \emph{left orderable monsters}, meaning that every faithful action on the real line is globally contracting \cite[Corollary 0.3]{uniformlyperfect}. We are also able to deduce the stronger fact that second bounded cohomology vanishes with integral coefficients, which in turn implies that every action on the circle has a global fixpoint, strengthening \cite[Corollary 0.2]{uniformlyperfect} (Corollary \ref{cor:integral}).

To prove Theorem \ref{theorem:mainNavas}, we will start by applying Theorem \ref{thm:main} in order to show that subgroups $\Gamma \leq G_{\rho}$ with a global fixpoint satisfy $\HH^2_b(\Gamma; \mathbf{R}) = 0$. The promotion of these computations to $G_{\rho}$ is non-trivial, thus showing how Theorem \ref{thm:main} can be useful also as a technical ingredient to provide more complicated computations of second bounded cohomology. As a further illustration of this, we will sketch how our arguments may be adapted to prove vanishing of second bounded cohomology of the groups $T(\varphi, \sigma)$ from \cite{LBMB}, which are also examples of finitely generated simple left orderable groups, and thus also answer Question \ref{qNavas}. \\

The focus of this paper is on groups acting on $1$-manifolds. However, the criterion of Theorem \ref{thm:main} applies in much greater generality.
For instance, the authors and Matthew C.B. Zaremsky used Theorem \ref{thm:main} to establish the vanishing of second bounded cohomology of certain braided Thompson groups, via their actions on Cantor set complements in the plane \cite{bV}. As another example, Theorem \ref{thm:main} has been used by the first author, L\"oh and Moraschini to show that the direct limit general linear group of the suspension over a ring, a group which is central to algebraic $K$-theory, has vanishing second and third bounded cohomology \cite[Proposition 4.9]{bcbinate}. The latter result showcases how our criterion may be combined with cohomological techniques to prove vanishing results in degree $3$ as well.
The method of proof of Theorem \ref{theorem:mainNavas} also applies beyond actions on $1$-manifolds: in Subsection \ref{ss:nico} we will see an example of application to actions on solenoids. \\


Moving to actions on the circle, applying recent work on boundedly acyclic resolutions \cite{bcbinate} we are able to deduce the following result from Theorem \ref{thm:PLPP}:

\begin{theorem}
\label{thm:T}
Let $G$ be a group of orientation-preserving piecewise linear or piecewise projective homeomorphisms of the circle. Suppose that $G$ has an orbit $Y$ such that the action on circularly ordered pairs and triples in $Y$ is transitive. Then $\HH^2_b(G; \mathbf{R})$ is one-dimensional, spanned by the real Euler class.
\end{theorem}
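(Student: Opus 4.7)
The plan is to exhibit $\HH^{\leq 2}_b(G;\mathbf{R})$ as the cohomology of an explicit, finite-dimensional complex arising from a boundedly acyclic resolution supported on the orbit $S$, and then read off the answer.

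First, I verify that stabilizers of tuples from $S$ are $2$-boundedly acyclic. For any $x \in S$, or more generally for any finite tuple from $S$, cutting $S^1$ at one of the fixed points identifies the stabilizer with a group of orientation-preserving piecewise linear (resp.\ piecewise projective) homeomorphisms of a closed interval (resp.\ of $\mathbf{R}$). By Theorem~\ref{thm:PLPP}, its second bounded cohomology vanishes; combined with the trivial vanishing of $\HH^1_b$ with real coefficients, all such stabilizers are $2$-boundedly acyclic.

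Next, I invoke the boundedly acyclic resolution machinery of \cite{bcbinate}: under this $2$-bounded acyclicity of tuple stabilizers, $\HH^n_b(G;\mathbf{R})$ for $n \leq 2$ is computed by the cohomology of the cochain complex $\bigl(\ell^\infty(S^{n+1})^G, d\bigr)$ with the usual alternating-sum differentials. The transitivity hypotheses determine the low-degree terms exactly: $\ell^\infty(S)^G = \mathbf{R}$, $\ell^\infty(S^2)^G$ is $2$-dimensional (diagonal and off-diagonal orbits), and $\ell^\infty(S^3)^G$ is $6$-dimensional ($1$ fully diagonal orbit, $3$ orbits with exactly one coincidence, and $2$ orbits of distinct triples, one for each circular orientation). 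An elementary cocycle/coboundary computation in this finite-dimensional complex then yields $\HH^2_b(G;\mathbf{R}) \cong \mathbf{R}$, generated by the $G$-invariant cocycle taking the values $\pm 1$ on distinct triples according to their circular orientation and $0$ on degenerate triples. This is, by construction, the bounded Euler cocycle of the $G$-action on $S^1$.

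The main obstacle is locating the precise form of the resolution theorem in \cite{bcbinate}: one needs that $\ell^\infty(S^{\bullet+1})^G$ computes bounded cohomology in degrees $\leq 2$ under $2$-bounded acyclicity of the (tuple) stabilizers and transitivity only at the stated level (on circularly ordered pairs and triples), rather than under full bounded acyclicity or higher multi-transitivity. Once the correct version is in hand, the remaining work is the short finite-dimensional linear computation sketched above, together with the identification of the generator of $\HH^2_b(G;\mathbf{R})$ with the real Euler class, which follows from the classical description of the Euler class as the bounded orientation cocycle on $S^1$.
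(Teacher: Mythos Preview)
Your proposal is correct and takes essentially the same approach as the paper. The paper's proof verifies via Theorem~\ref{thm:PLPP} that point-stabilizers (and more generally tuple-stabilizers) in $S$ have vanishing second bounded cohomology, and then invokes Proposition~\ref{prop:circle} (which is precisely \cite[Proposition~6.9]{bcbinate}) as a black box; your proposal does the same first step and then unpacks the content of that proposition by sketching the boundedly acyclic resolution $\ell^\infty(S^{\bullet+1})^G$ and the resulting finite-dimensional computation, so the underlying strategy is identical.
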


Our result is a strong generalization of the corresponding statement for the special case of Thompson's group $T$ \cite{spectrum1}. In particular, it shows that many Stein--Thompson groups have second bounded cohomology generated by the real Euler class, which was conjectured by Heuer and L\"oh \cite[Conjecture A.5]{spectrumv1}. This also applies to a multitude of other examples, including the finitely presented infinite simple group $S$ constructed by the second author in \cite{lodhaS}. See \cite{monsters} for a detailed discussion.

This result has interesting consequences for rigidity of group actions.
Using the interpretation of actions on the circle via bounded cohomology, we deduce that if $G$ is as above and the given action is proximal (see Subsection \ref{ss:dynamics} for the definition), then $G$ admits a unique such action on the circle up to conjugacy (Corollary \ref{cor:rigidity}). This fact was known for Thompson's group $T$ and certain related groups, but our result provides a considerable generalization to a much larger family.

More generally, the finite-dimensionality of the second bounded cohomology of the groups from Theorems \ref{thm:main}, \ref{thm:PLPP} and \ref{thm:T} implies further rigidity results. For instance, every homomorphism from $G$ to the mapping class group of a hyperbolic surface has virtually abelian image \cite{mcg}, and if $G$ is moreover finitely generated, then it admits only finitely many conjugacy classes of isometric actions on an irreducible symmetric space which is Hermitian and not of tube type \cite{hermitian1, hermitian2}. \\

The next application concerns the \emph{spectrum of $\scl$ on finitely presented groups} which is defined as $\{r\in \mathbf{R}_{\geq 0} \mid \exists \text{ a finitely presented group }G\text{ such that }r\in \scl(G)\}$.
For the classes of finitely generated and recursively presented groups, the spectrum has been completely described by Heuer in \cite{heuer}. However the full picture for finitely presented groups remains mysterious \cite[Question 5.48]{calegari}. It has been shown for several classes of finitely presented groups that the spectrum of $\scl$ consists entirely of rationals. This holds for instance for the lift of Thompson's group $T$
\cite{ghys_serg, zhuang}, free groups \cite{free} and certain graphs of groups \cite{gog}.

In \cite[6 C]{gromov}, Gromov asked whether irrational values can occur in the scl of finitely presented groups. The first examples were found by Zhuang \cite{zhuang}, who showed that lifts of certain Stein--Thompson's groups have elements with transcendental $\scl$. Since then other examples have emerged \cite[Chapter 5]{calegari}, but the $\scl$ always appeared to be either rational or transcendental. This led to the following natural question:

\begin{question}[Calegari {\cite[Question 5.47]{calegari}}]
\label{qCalegari}

Is there a finitely presented group in which $\scl$ takes on an irrational value that is algebraic?
\end{question}

This question was also asked in \cite{book} and mentioned in \cite{heuer}. Here we answer it in the positive:

\begin{theorem}
\label{thm:scl}
There exists a finitely presented (and type $F_\infty$) group $G$ such that $\scl(G)$ contains all of $\frac{1}{2} \mathbf{Z}[\tau]$, where $\tau = \frac{\sqrt{5} - 1}{2}$ is the small golden ratio.
\end{theorem}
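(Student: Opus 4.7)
The plan is to exhibit a concrete finitely presented group in the class governed by Theorem~\ref{thm:T}, lift it to the natural central $\mathbf{Z}$-extension, and read off $\scl$ via Bavard duality. Let $T_\tau$ denote the group of orientation-preserving piecewise linear homeomorphisms of $S^1 = \mathbf{R}/\mathbf{Z}$ whose slopes belong to $\tau^{\mathbf{Z}}$ and whose breakpoints lie in $\mathbf{Z}[\tau]/\mathbf{Z}$. Adapting the Stein--Thompson analysis of golden ratio Thompson groups, one expects $T_\tau$ to be finitely presented and of type $F_\infty$, and a standard PL partition argument shows that the action is transitive on circularly ordered pairs and triples in the dense orbit $\mathbf{Z}[\tau]/\mathbf{Z}$. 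Theorem~\ref{thm:T} then delivers $\HH^2_b(T_\tau; \mathbf{R}) = \mathbf{R}\cdot e_{\mathbf{R}}$.

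Take $G := \widetilde{T}_\tau$ to be the group of piecewise linear homeomorphisms of $\mathbf{R}$ commuting with $x \mapsto x+1$, with slopes in $\tau^{\mathbf{Z}}$ and breakpoints in $\mathbf{Z}[\tau]$. This fits into a central extension
\begin{equation*}
1 \to \mathbf{Z} \to G \to T_\tau \to 1,
\end{equation*}
so $G$ inherits finite presentability and type $F_\infty$ from $T_\tau$. The Poincar\'e translation number $\widetilde{\rot} : G \to \mathbf{R}$ is a homogeneous quasimorphism of defect $1$ whose coboundary represents the pulled-back real Euler class; combining this with the one-dimensionality from Theorem~\ref{thm:T} via Bouarich's exact sequence for central $\mathbf{Z}$-extensions, the space of homogeneous quasimorphisms on $G$ modulo homomorphisms is one-dimensional and spanned by $\widetilde{\rot}$.

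By Bavard duality, every $g \in G$ admitting a power in the commutator subgroup satisfies $\scl(g) = \tfrac{1}{2}|\widetilde{\rot}(g)|$. For each $r \in \mathbf{Z}[\tau]$, the translation $\rho_r : x \mapsto x+r$ lies in $G$ (it has constant slope $1 \in \tau^{\mathbf{Z}}$, and its image in $T_\tau$ is a rotation with single breakpoint in $\mathbf{Z}[\tau]/\mathbf{Z}$), and satisfies $\widetilde{\rot}(\rho_r) = r$. Provided the abelianization of $G$ is finitely generated, some uniform power $\rho_r^k$ lies in $G'$, whence
\begin{equation*}
\scl(\rho_r) \;=\; \frac{\scl(\rho_r^k)}{k} \;=\; \frac{|\widetilde{\rot}(\rho_r^k)|}{2k} \;=\; \frac{|r|}{2}.
\end{equation*}
Letting $r$ range over $\mathbf{Z}[\tau]$ gives $\tfrac{1}{2}\mathbf{Z}[\tau] \subseteq \scl(G)$.

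The hardest step is the finiteness input on $T_\tau$: establishing type $F_\infty$ requires a technical adaptation of the Stein--Thompson and golden-ratio Thompson group machinery to this circle setting. The remaining items --- transitivity on triples in the orbit, the Bouarich reduction to a single quasimorphism, the Bavard duality computation, and sufficient control of the abelianization --- are modular and closely mirror Zhuang's transcendental computations. The decisive new ingredient is that the golden ratio places the translation number in the \emph{algebraic} ring $\mathbf{Z}[\tau]$ rather than in a transcendental extension of $\mathbf{Q}$, thereby yielding algebraic irrationals in $\scl(G)$.
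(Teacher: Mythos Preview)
Your outline matches the paper's approach closely: take the golden ratio Thompson group $T_\tau$, apply Theorem~\ref{thm:T}, pass to the total lift, and evaluate the rotation quasimorphism on translations via Bavard duality. However, several steps in your sketch are genuine gaps rather than routine verifications.

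First, your definition of $T_\tau$ is incomplete. Requiring only that slopes lie in $\tau^{\mathbf{Z}}$ and breakpoints in $\mathbf{Z}[\tau]/\mathbf{Z}$ admits \emph{every} rotation of the circle (rotations have no breakpoints), so the orbit of a point is all of $S^1$, not $\mathbf{Z}[\tau]/\mathbf{Z}$ as you assert. The paper imposes the additional condition that elements preserve $\mathbf{Z}[\tau]/\mathbf{Z}$ setwise, and explicitly remarks that this is necessary precisely because of rotations. Your later claim that a rotation has ``a single breakpoint in $\mathbf{Z}[\tau]/\mathbf{Z}$'' reflects this confusion.

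Second, the assertion that $\widetilde{\rot}$ has defect exactly $1$ on $G$ is not automatic: the defect on $\overline{\Homeo^+(\mathbf{R}/\mathbf{Z})}$ is $1$, but restricting to a subgroup can only decrease it. The paper (Corollary~\ref{cor:sclrot}) establishes defect $1$ by showing that $T_\tau$ is $C^0$-dense in $\Homeo^+(\mathbf{R}/\mathbf{Z})$, which in turn uses high transitivity on a dense orbit. You need this argument, or the Bavard computation $\scl=\tfrac{1}{2}|\widetilde{\rot}|$ fails.

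Third, ``provided the abelianization of $G$ is finitely generated, some uniform power $\rho_r^k$ lies in $G'$'' is not a valid inference: a finitely generated abelianization with free part need not make any $\rho_r$ torsion. What is actually needed, and what the paper proves, is that $[G:G']=2$; this relies on the Burillo--Nucinkis--Reeves result that $[T_\tau:T_\tau']=2$ together with a separate argument (Lemma~\ref{lemma: lift}) that the central $\mathbf{Z}$ already lies in $G'$. Neither of these is formal, and you should not leave them conditional.
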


The group $G$ is the lift to the real line of the \emph{golden ratio Thompson's group} $T_\tau$, which is the natural ``circle version" of the group $F_{\tau}$ defined by Cleary in \cite{cleary}. 
(Just like Thompson's group $T$ is the ``circle version" of Thompson's group $F$.)
The group $T_{\tau}$ differs from $T$ for two main reasons: it contains irrational rotations (while every element in $T$ has a periodic point) and its commutator subgroup has index two and is simple (while $T$ itself is simple).

The proof of Theorem \ref{thm:scl} consists in analyzing the structure of $T_\tau$ to show that it satisfies the hypotheses of Theorem \ref{thm:T}, after which the computation of the stable commutator length of certain elements in the lift will follow from our results on groups acting on the circle. \\


The final application concerns the \emph{spectrum of simplicial volume}. The notion of simplicial volume $\| M \|$ of an (oriented, closed, connected) manifold $M$ was introduced by Gromov in \cite{gromov}, and it was the main motivation to extend the definition of bounded cohomology to topological spaces.
It is defined as the simplicial norm of the fundamental class of the manifold, and provides a homotopy invariant that captures the topological complexity of the manifold and admits a plethora of applications in geometry \cite[Chapter 7]{BC}.

A question that has received recent attention is to understand the spectrum $\sv(d)$ of the simplicial volume for oriented, closed, connected manifolds in dimension $d>3$.
A recent breakthrough result of Heuer and L\"{o}h proves that $\sv(d)$ is dense in $\mathbf{R}_{\geq 0}$ for each $d>3$ and that $\mathbf{Q}_{\geq 0} \subset \sv(4)$ \cite{spectrum1}. Moreover, $\sv(4)$ also contains transcendental values \cite{spectrum2}. 
In this paper, we are able to obtain the first irrational algebraic values:

\begin{theorem}
\label{thm:sv}
There exists a manifold $M$ such that the simplicial volume $\| M \|$ is algebraic and irrational.
\end{theorem}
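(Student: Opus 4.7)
The plan is to combine Theorem~\ref{thm:scl} with the machinery developed by Heuer and L\"oh that converts stable commutator length values in finitely presented groups into simplicial volumes of closed oriented manifolds.

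First, I would invoke Theorem~\ref{thm:scl} to produce a finitely presented group $G$ together with an element $g \in G'$ whose stable commutator length $\scl_G(g)$ is algebraic and irrational. Such an element exists because Theorem~\ref{thm:scl} asserts $\tfrac{1}{2}\mathbf{Z}[\tau] \subseteq \scl(G)$, and $\tfrac{1}{2}\mathbf{Z}[\tau]$ lies in the algebraic closure of $\mathbf{Q}$ and contains infinitely many irrationals (for instance $\tau/2$ itself).

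Next, I would apply the manifold-realization construction of Heuer--L\"oh from \cite{spectrum1, spectrum2}. Given a finitely presented group $G$ and an element $g \in G'$, this machinery produces an oriented closed connected manifold $M$ (of dimension $4$ or higher) together with an equality $\|M\| = c \cdot \scl_G(g)$ for an explicit positive rational constant $c$. Roughly, $M$ is assembled from a presentation complex for $G$ together with plumbings or mapping-torus-type pieces that geometrically realize a commutator expression for $g$; the upper bound on $\|M\|$ is controlled by the combinatorics of this expression, while the matching lower bound is extracted from Bavard duality, by pulling back the quasimorphisms achieving $\scl_G(g)$ to bounded cohomology classes of $M$ and evaluating against the fundamental class.

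Putting the two ingredients together, we obtain a manifold $M$ with $\|M\| = c \cdot \scl_G(g) \in \mathbf{Q}^{\times} \cdot \tfrac{1}{2}\mathbf{Z}[\tau]$; choosing $g$ so that $\scl_G(g) \notin \mathbf{Q}$ makes $\|M\|$ algebraic and irrational. The main obstacle, and the step most worth double-checking, is to verify that the Heuer--L\"oh construction yields the genuine \emph{equality} $\|M\| = c \cdot \scl_G(g)$ (not merely an inequality) and that this equality is insensitive to the arithmetic nature of $\scl_G(g)$, so that it transfers without modification from the rational setting of \cite{spectrum1} to the irrational setting needed here. Since the relevant lower-bound argument is purely a bounded-cohomology computation via Bavard duality, no rationality hypothesis enters, so this should be routine once the construction is cited in the appropriate form. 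All the genuine novelty is therefore absorbed into Theorem~\ref{thm:scl}.
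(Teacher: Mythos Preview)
Your proposal has a genuine gap: the Heuer--L\"oh realization theorem (Theorem~\ref{thm:spectrum} in the paper) does \emph{not} apply to an arbitrary finitely presented group. It requires the additional hypothesis $\HH_2(G;\mathbf{R}) = 0$, which you have omitted. The group $G = \overline{T_\tau}$ produced by Theorem~\ref{thm:scl} has no reason to satisfy this, so you cannot feed it directly into the construction. The paper explicitly warns about this in the introduction: ``Theorem~\ref{thm:sv} is not a direct consequence of Theorem~\ref{thm:scl}, and it relies on the full power of Theorem~\ref{thm:T}.''

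The missing step is Proposition~\ref{prop:ExtensionSimplVol}: one must pass from $T_\tau$ to a further central extension $E$ engineered so that $\HH_2(E;\mathbf{R}) = 0$, while checking that $\scl(E') = \scl(\overline{T_\tau}')$ as subsets of $\mathbf{R}$. Establishing that $\scl$ survives this passage is exactly where control of the full second bounded cohomology is needed: Proposition~\ref{prop:ExtensionSimplVol} requires $\HH^2_b(T_\tau)$ to be one-dimensional and spanned by the Euler class, which is the content of Theorem~\ref{thm:T}. Your diagnosis that ``the main obstacle is to verify the equality $\|M\| = c \cdot \scl_G(g)$'' and that ``no rationality hypothesis enters'' misidentifies the difficulty; the obstruction is homological, not arithmetic, and overcoming it is where the new bounded-cohomology computation earns its keep.
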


More concretely, we demonstrate that for every real $x\in 48 \cdot \mathbf{Z}[\tau]$ (where $\tau = \frac{\sqrt{5}-1}{2}$ is the small golden ratio),
there is an oriented, closed, connected $4$-manifold $M$ such that $\| M \| = x$.
We stress that the general results from \cite{spectrum1} and \cite{spectrum2} only allow to promote values of stable commutator length to values of simplicial volume under a full understanding of the second bounded cohomology. In particular, Theorem \ref{thm:sv} is not a direct consequence of Theorem \ref{thm:scl}, and it relies on the full power of Theorem \ref{thm:T}. In fact, the same method shows that several known trascendental values of stable commutator length can be promoted to simplicial volumes, providing new evidence toward the conjecture that the two spectra coincide \cite[Question 1.1]{spectrum2}.

\begin{remark}A few months after the first version of this paper, Monod proved that the bounded cohomology of Thompson's group $F$ vanishes in all positive degrees, and including some non-trivial coefficients \cite{lamplighters}. The proof in \cite{lamplighters} makes crucial use of \emph{self-similarity} and \emph{coamenability}, in a way that requires stronger assumptions than those we use here to prove vanishing of second bounded cohomology (see \cite[Theorem 4, Corollary 5]{lamplighters}). In particular, the results from \cite{lamplighters} are not enough to establish the vanishing of bounded cohomology for \emph{arbitrary} groups of piecewise linear or piecewise projective homeomorphisms, among whom the self similar groups are a very sparse subclass. Thus, to reach the degree of generality that we present in this paper, our Theorem \ref{thm:main} is still the best approach. Moreover, Monod's method do not appear to have relevance to our solution of Question \ref{qNavas}, or to the solution of the homological von Neumann--Day problems, since the groups $G_\rho$ are far from being self similar, and in both of the other situations Theorem \ref{thm:main} is applied to \emph{arbitrary} groups of piecewise linear, respectively piecewise projective, homeomorphisms.
\end{remark}

\textbf{Organization:} We start by laying out the necessary background on groups acting on $1$-manifolds and bounded cohomology in Section \ref{s:preli}. In Section \ref{s:criterion} we prove Theorem \ref{thm:main} and its first applications, including Theorem \ref{thm:PLPP}. In Section \ref{s:grho} we introduce the groups $G_\rho$ and analyze their structure, with the aim to prove Theorem \ref{theorem:mainNavas}. We move to circle actions in Section \ref{s:circle}, proving Theorem \ref{thm:T}. Section \ref{s:scl} is dedicated to the golden ratio Thompson group, and Theorem \ref{thm:scl}. Finally, in Section \ref{s:simvol} we review some background on simplicial volume and prove Theorem \ref{thm:sv}. \\

\textbf{Acknowledgements:} The first author was supported by an ETH Z\"urich Doc.Mobility Fellowship. The second author was supported by a START-preis grant of the Austrian science fund. The authors would like to thank Elena Bogliolo, Jonathan Bowden, Matt Brin, Danny Calegari, Roberto Frigerio, Nicolaus Heuer, Alessandra Iozzi, Clara L\"oh, Nicolas Monod, Justin Moore, Marco Moraschini, Andr{\'e}s Navas and Matt Zaremsky for several useful discussion and comments. They are also indebted to anonymous referees for useful comments on earlier versions of this paper (and on the paper \cite{our}, which was later merged to the present paper), in particular for the arguments in Subsection \ref{ss:nico}.

\section{Preliminaries}
\label{s:preli}

Group actions will always be on the right. Accordingly, we will use the conventions $[g, h] = g^{-1} h^{-1} g h$ for commutators, and $g^h := h^{-1}gh$ for conjugacy. All groups considered in this paper will be discrete. Since the applications to simplicial volume are of a different, more geometric-topological flavour, we delay the relevant preliminaries to the corresponding section.

\subsection{Dynamics of group actions on $1$-manifolds}
\label{ss:dynamics}

Given $g\in \Homeo^+(M)$ for a given $1$-manifold $M$, we define the \emph{support of $g$} as: $$Supp(g) := \{x\in M\mid x\cdot g\neq x\}.$$
More generally, given a subgroup $G\leq \Homeo^+(M)$, we define 
$$Supp(G) := \{x\in M\mid \exists g\in G\text{ such that } x\cdot g\neq x\}.$$
We identify $\mathbf{S}^1$ with $\mathbf{R}/\mathbf{Z}$.
Recall that $\Homeo^+(\mathbf{R})$ and $\Homeo^+(\mathbf{R/Z})$ are the groups of orientation-preserving homeomorphisms of the real line and the circle. The latter group action admits a so-called \emph{lift} to the real line, which is the group $\overline{\Homeo^+(\mathbf{R}/\mathbf{Z})}\leq \Homeo^+(\mathbf{R})$ which equals the full centralizer of the group of integer translations $\mathbf{Z}=\{x\mapsto x+n\mid n\in \mathbf{Z}\}$ inside $\Homeo^+(\mathbf{R})$. There is a short exact sequence $$1\to \mathbf{Z}\to \overline{\Homeo^+(\mathbf{R}/\mathbf{Z})} \to \Homeo^+(\mathbf{R}/\mathbf{Z}) \to 1.$$
In this paper, we fix the notation for the map $$\eta: \overline{\Homeo^+(\mathbf{R}/\mathbf{Z})}\to \Homeo^+(\mathbf{R}/\mathbf{Z}).$$

Given a group $G\leq \Homeo^+(\mathbf{R}/\mathbf{Z})$, we define the \emph{total lift}
of $G$ to be the group $\eta^{-1}(G)$, which is a central extension of $G$ by $\mathbf{Z}$. Throughout this paper, we will denote the total lift of $G$ by $\overline{G}$.
More generally, we may define a \emph{lift} of $G$ as a group action $H\leq \overline{\Homeo^+(\mathbf{R}/\mathbf{Z})}$ satisfying that $\eta(H)=G$. Such a lift may not be unique.
On the other hand, the total lift of $G$ is the group generated by all possible such lifts. \\

A group action $G\leq \Homeo^+(\mathbf{R})$ is said to be \emph{boundedly supported} if for each $f\in G$ there is a bounded interval $I\subset \mathbf{R}$ such that $Supp(f) \subset I$.
A group action $G\leq \Homeo^+(M)$ for a connected $1$-manifold $M$ is said to be \emph{proximal}, if for each pair of nonempty open sets $I,J\subset M, J\neq M$ satisfying that the closure of $J$ is compact, there is an element $f\in G$ such that $J\cdot f\subset I$.
In the case of minimal actions on the circle, it is equivalent to the \emph{contracting} property: for every arc $I$ there exists a sequence $(f_n)_{n \geq 1} \subset G$ such that the diameter of $I \cdot f_n$ tends to $0$ as $n \to \infty$ \cite[p. 12]{burger}.



\subsection{Stable commutator length}


For a comprehensive survey on the topic, we refer the reader to \cite[Chapter 2]{calegari}. The way we will compute $\scl$ is via quasimorphisms.

\begin{definition}
Let $\phi : G \to \mathbf{R}$. Its \emph{defect} is
$$D(\phi) := \sup\limits_{g, h \in G} \left|\phi(g) + \phi(h) - \phi(gh)\right|.$$
If the defect is finite, we say that $\phi$ is a \emph{quasimorphism}. If moreover $\phi(g^k) = k \phi(g)$ for every $g \in G$ and every $k \in \mathbf{Z}$, we say that $\phi$ is \emph{homogeneous}.
\end{definition}

The following elementary property of homogeneous quasimorphisms will be useful:

\begin{lemma}[{\cite[2.2.3]{calegari}}]
\label{lem:conjugacyqm}

Homogeneous quasimorphisms are conjugacy-invariant.
\end{lemma}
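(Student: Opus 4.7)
The plan is to exploit the homogeneity condition $\phi(g^k) = k\phi(g)$ to turn a bounded error into a vanishing one by passing to large powers, which is a standard trick for homogeneous quasimorphisms.

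First, I would note the basic identity $(h^{-1}gh)^n = h^{-1}g^n h$ for every $n \in \mathbf{Z}$ and every $g, h \in G$. Applying the quasimorphism property twice, we get
$$\bigl|\phi(h^{-1}g^n h) - \phi(h^{-1}) - \phi(g^n) - \phi(h)\bigr| \leq 2D(\phi).$$
Homogeneity with $k = -1$ yields $\phi(h^{-1}) = -\phi(h)$, so the terms involving $h$ cancel and we obtain
$$\bigl|\phi(h^{-1}g^n h) - \phi(g^n)\bigr| \leq 2D(\phi).$$

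Next, I would apply homogeneity on both sides: $\phi(g^n) = n\phi(g)$, and since $h^{-1}g^n h = (h^{-1}gh)^n$, also $\phi(h^{-1}g^n h) = n\phi(h^{-1}gh)$. Substituting gives
$$\bigl|n\phi(h^{-1}gh) - n\phi(g)\bigr| \leq 2D(\phi)$$
for every $n \in \mathbf{Z}$. Dividing by $|n|$ and letting $n \to \infty$ forces $\phi(h^{-1}gh) = \phi(g)$, which is precisely conjugacy-invariance.

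There is no real obstacle here; the only thing to be careful about is the direction of conjugation (the paper fixes $g^h = h^{-1}gh$ as a convention), and the sign issue $\phi(h^{-1}) = -\phi(h)$, which is immediate from homogeneity with $k = -1$. The proof is a two-line application of the definitions once one has the idea of comparing $\phi((h^{-1}gh)^n)$ with $\phi(h^{-1}g^n h)$ and using that $n$ can be taken arbitrarily large.
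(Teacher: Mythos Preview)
Your proof is correct and is the standard argument. The paper actually states this lemma without proof, treating it as an elementary fact, so there is nothing to compare against; your write-up would serve perfectly well as the omitted verification.
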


Every quasimorphism is at a bounded distance from a unique homogeneous quasimorphism \cite[Lemma 2.21]{calegari}. The following example is the fundamental one for our purposes:

\begin{example}[Poincar\'e, see {\cite[11.1]{ds}}]
\label{ex:rot}

Let $g \in \overline{\Homeo^+(\mathbf{R}/\mathbf{Z})}$ be an orientation-preserving homeomorphism of the line that commutes with integer translations. Define the \emph{rotation number} of $g$ to be
$$\rot(g) := \lim\limits_{n \to \infty} \frac{0 \cdot g^n}{n}.$$
Then $\rot : \overline{\Homeo^+(\mathbf{R}/\mathbf{Z})} \to \mathbf{R}$ is a homogeneous quasimorphism of defect $1$ \cite[Proposition 2.92]{calegari}.
It follows that for every subgroup $G \leq \overline{\Homeo^+(\mathbf{R}/\mathbf{Z})}$, the restriction of $\rot$ to $G$ is a homogeneous quasimorphism of defect at most $1$. Note however that the defect of $\rot|_G$ need not be equal to $1$: for example $\rot|_{\mathbf{Z}}$ is a homomorphism, so its defect is $0$.
\end{example}

The connection between $\scl$ and quasimorphisms is provided by the following fundamental result (see \cite[Theorem 2.70]{calegari}):

\begin{theorem}[Bavard Duality \cite{bavard}]
\label{thm:bavard}

Let $G$ be a group and let $g \in G'$. Then:
$$\scl(g) = \frac{1}{2} \sup \frac{|\phi(g)|}{D(\phi)};$$
where the supremum runs over all homogeneous quasimorphisms of positive defect.
\end{theorem}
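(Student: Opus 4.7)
The plan is to prove Bavard Duality via a Hahn--Banach duality between a filling seminorm that computes $\scl$ and a dual norm on homogeneous quasimorphisms.

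First, I would reinterpret the left-hand side as a filling seminorm. Consider the real vector space $B_H(G)$ spanned by formal sums of elements of $G$ modulo the relations $g \sim g^h$ (conjugacy-invariance) and $g^n \sim n \cdot g$ (homogenization), equipped with the $\ell^1$-seminorm inherited from these generators. Via the classical dictionary between products of commutators and maps of once-punctured orientable surfaces into a $K(G,1)$, the commutator length $\cl(g^n)$ is the minimal genus of such a surface whose boundary represents $g^n$. Stabilizing in $n$ and invoking the above relations identifies, up to an explicit constant, the filling seminorm on a corresponding cycle space with $\scl$.

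Second, I would invoke Hahn--Banach. Bounded linear functionals on $B_H(G)$ descend, by the imposed relations, to conjugacy-invariant homogeneous real-valued functions on $G$ whose ``coboundary" is bounded --- that is, precisely to homogeneous quasimorphisms (and Lemma \ref{lem:conjugacyqm} tells us that the conjugacy-invariance condition is automatic, ensuring that no quasimorphisms are lost by the descent). The condition that the functional has dual norm at most $1$ translates to the defect condition, with a normalization factor coming from the boundary map on commutators. Tracking the constants produces the stated equality $\scl(g) = \frac{1}{2} \sup |\phi(g)|/D(\phi)$.

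The principal obstacle is the first step: rigorously establishing that the $\scl$-derived seminorm equals the filling seminorm on $B_H(G)$. This involves a topological argument matching commutator factorizations with admissible surfaces --- explaining in particular why the relations $g \sim g^h$ and $g^n \sim n \cdot g$ must be imposed on the chain side --- together with a rationality argument showing that real-coefficient $2$-chains realizing near-optimal fillings can be approximated by integer-coefficient chains corresponding to honest commutator expressions of powers of $g$, with error vanishing in the limit. The passage to real coefficients is essential for Hahn--Banach to apply, but makes the topological correspondence more delicate.
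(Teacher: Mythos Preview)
The paper does not prove this theorem: Bavard Duality is quoted as a classical result, with a citation to Bavard's original paper and a pointer to Calegari's monograph for a full proof. There is therefore no ``paper's own proof'' to compare against.

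That said, your outline is essentially the standard Hahn--Banach proof as presented in the reference the paper cites (Calegari, \emph{scl}, Theorem~2.70). A few points of imprecision are worth flagging. First, the relevant seminorm on the space of homogenized $1$-boundaries is not the $\ell^1$-seminorm inherited from generators, but the \emph{filling} norm: the infimum of $\|A\|_1$ over real $2$-chains $A$ with $\partial A$ equal to the given element. You conflate the two in your first paragraph before correcting course later. Second, the easy inequality $\scl(g) \geq \tfrac{1}{2}\,|\phi(g)|/D(\phi)$ follows directly from the defect bound and homogeneity, without any topology; only the reverse inequality needs the surface interpretation and Hahn--Banach. It would strengthen your write-up to separate these two directions explicitly. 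Finally, your ``rationality argument'' is indeed the crux of the hard direction: one must show that the real-coefficient filling norm agrees with the stabilized integral genus, which is where the surface-theoretic input (gluing and Euler characteristic bookkeeping) actually enters.
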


\begin{corollary}[Bavard \cite{bavard}]
\label{cor:bavard}
Let $G$ be a group. Then $\scl$ vanishes everywhere on $G'$ if and only if every homogeneous quasimorphism on $G$ is a homomorphism.
\end{corollary}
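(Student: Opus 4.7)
The plan is to derive both implications as essentially immediate consequences of Bavard Duality (Theorem \ref{thm:bavard}), together with one small auxiliary observation: every homogeneous quasimorphism on an abelian group is a homomorphism.

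For the easier implication, I would assume every homogeneous quasimorphism on $G$ is a homomorphism. Then every such $\phi$ has defect zero, so the supremum in the Bavard formula, which ranges only over homogeneous quasimorphisms of positive defect, is taken over an empty set and equals $0$. Hence $\scl(g) = 0$ for every $g \in G'$.

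For the main implication, assume $\scl \equiv 0$ on $G'$ and let $\phi$ be a homogeneous quasimorphism on $G$; the goal is to show $\phi$ is a homomorphism. If $D(\phi) = 0$ there is nothing to prove. Otherwise, Bavard Duality gives
$$\frac{|\phi(g)|}{2 D(\phi)} \leq \scl(g) = 0 \quad \text{for every } g \in G',$$
so $\phi$ vanishes on $G'$ and therefore factors through the abelianization $G/G'$ as a homogeneous quasimorphism $\bar\phi$ of defect at most $D(\phi)$. It then suffices to show that homogeneous quasimorphisms on abelian groups are homomorphisms. This is the one genuine step: for commuting $g, h$ and any $n \in \mathbf{N}$, homogeneity combined with the identity $(gh)^n = g^n h^n$ yields
$$n\bigl|\bar\phi(gh) - \bar\phi(g) - \bar\phi(h)\bigr| = \bigl|\bar\phi((gh)^n) - \bar\phi(g^n) - \bar\phi(h^n)\bigr| \leq D(\bar\phi),$$
and letting $n \to \infty$ forces additivity of $\bar\phi$. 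Thus $\phi$ is a homomorphism, contradicting $D(\phi) > 0$.

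The argument is entirely formal given Bavard Duality, so I do not expect any real obstacle. The only genuine content is the abelian-group lemma, which is a two-line computation from the definition of defect; everything else is bookkeeping with the Bavard formula.
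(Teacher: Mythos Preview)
Your proof is correct and is the standard derivation from Bavard Duality. The paper does not actually include a proof of this corollary---it is stated as a known result with a citation to Bavard---so there is nothing to compare against.
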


In particular, if $G$ is such that the space of homogeneous quasimorphisms is one-dimensional and spanned by $\phi$, then the computation of $\scl$ reduces to the evaluation of $\phi$ and its defect. This is the approach taken in \cite{zhuang} to produce examples of groups with transcendental $\scl$, where the $\phi$ in question is the rotation quasimorphism.

\subsection{(Bounded) cohomology and central extensions}
\label{ss:bc}

We will work with cohomology and bounded cohomology with trivial real coefficients, and use the definition in terms of the bar resolution. We refer the reader to \cite{brown_book} and \cite{BC} for a general and complete treatment of ordinary and bounded cohomology (of discrete groups), respectively. \\

For every $n \geq 0$, denote by $\CC^n(G)$ the set of real-valued functions on $G^n$. By convention, $G^0$ is a single point, so $\CC^0(G) \cong \mathbf{R}$ consists only of constant functions. We define differential operators $\delta^\bullet : \CC^\bullet(G) \to \CC^{\bullet+1}(G)$ as follows:
$\delta^0 = 0$
and for $n \geq 1$:
\begin{align*}
    \delta^n(f)(g_1, \ldots, g_{n+1}) &= f (g_2, \ldots, g_{n+1}) \\
    &+ \sum\limits_{i = 1}^n (-1)^i f(g_1, \ldots, g_i g_{i+1}, \ldots, g_{n+1}) \\
    &+ (-1)^{n+1} f(g_1, \ldots, g_n).
\end{align*}

One can check that $\delta^{\bullet+1} \delta^{\bullet} = 0$, so $(\CC^\bullet(G), \delta^\bullet)$ is a cochain complex. We denote by $\ZZ^\bullet(G) := \ker(\delta^\bullet)$ the \emph{cocycles}, and by $\BB^\bullet(G) := \im(\delta^{\bullet-1})$ the \emph{coboundaries}. The quotient $\HH^\bullet(G) := \ZZ^\bullet(G) / \BB^\bullet(G)$ is the \emph{cohomology of $G$ with trivial real coefficients}. We will also call this \emph{ordinary cohomology} to make a clear distinction from the bounded one, which we proceed to define. \\

Restricting to functions $f :G^\bullet \to \mathbf{R}$ which are bounded, meaning that their supremum $\| f \|_\infty$ is finite, leads to a subcomplex $(\CC^\bullet_b(G), \delta^\bullet)$. We denote by $\ZZ^\bullet_b(G)$ the \emph{bounded cocycles}, and by $\BB^\bullet_b(G)$ the \emph{bounded coboundaries}. The vector space $\HH^\bullet_b(G) := \ZZ^\bullet_b(G) / \BB^\bullet_b(G)$ is the \emph{bounded cohomology of $G$} with trivial real coefficients.

The inclusion of the bounded cochain complex into the ordinary one induces a linear map at the level of cohomology, called the \emph{comparison map}:
$$c^\bullet : \HH^\bullet_b(G) \to \HH^\bullet(G).$$
This map is in general neither injective nor surjective. The kernel defines a sequence of subspaces $\EH^\bullet_b(G) \subset \HH^\bullet_b(G)$, called \emph{exact bounded cohomology}.

Cohomology is a contravariant functor: given a cocycle $z \in \ZZ^n(H)$ and a group homomorphism $\varphi : G \to H$, the \emph{pullback} $\varphi^* z := z \circ \varphi$ is a cocycle in $\ZZ^n(G)$. This also induces a pullback map in cohomology $\varphi^* : \HH^n(H) \to \HH^n(G)$, which is of course linear. The same holds for bounded cohomology.

A peculiarity of bounded cohomology classes is that they admit a further invariant, which is their \emph{Gromov seminorm} $\| \cdot \|$. This is simply the quotient seminorm induced by the supremum norm on the quotient $\HH^\bullet_b(G) := \ZZ^\bullet_b(G) / \BB^\bullet_b(G)$. In general it need not be a norm, i.e. there could be non-zero classes with zero norm. However this is not the case in degree $2$ \cite{MM}: here for every group the Gromov seminorm is always a norm.

One can similarly define cohomology and bounded cohomology with \emph{trivial integral coefficients} $\HH^n_b(G; \mathbf{Z})$: here $\mathbf{R}$ is replaced by $\mathbf{Z}$ with its Euclidean norm, equipped with the trivial $G$-action. This will only make a brief appearance in Subsection \ref{ss:eulerclass} and in Corollary \ref{cor:integral}. \\

For trivial real coefficients, the kernel of the second comparison map admits a description in terms of quasimorphisms:

\begin{theorem}
\label{thm:hbqm}

Let $Q(G)$ denote the space of homogeneous quasimorphisms on $G$, and $\ZZ^1(G)$ the subspace of homomorphisms. Then the sequence
$$0 \to \ZZ^1(G) \hookrightarrow Q(G) \xrightarrow{[\delta^1 \cdot]} \HH^2_b(G) \xrightarrow{c^2} \HH^2(G)$$
is exact. In particular, $c^2$ is injective if and only if every homogeneous quasimorphism on $G$ is a homomorphism.
\end{theorem}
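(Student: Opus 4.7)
The plan is to verify exactness at each position of the sequence, after which the ``in particular'' statement follows immediately. The overall approach is the classical correspondence between exact bounded $2$-classes and homogeneous quasimorphisms, and throughout the main leverage is the tension between boundedness and homogeneity of real-valued functions on $G$.

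I would first check that $[\delta^1 \cdot] : Q(G) \to \HH^2_b(G)$ is well-defined and that $c^2 \circ [\delta^1 \cdot] = 0$. For $\phi \in Q(G)$, the cochain $\delta^1 \phi(g,h) = \phi(h) - \phi(gh) + \phi(g)$ satisfies $\|\delta^1 \phi\|_\infty \leq D(\phi)$ by definition of the defect, so it is a bounded $2$-cocycle; its image in $\HH^2(G)$ is automatically zero since it is already an ordinary coboundary. The inclusion $\ZZ^1(G) \hookrightarrow Q(G)$ is clearly injective, and the homomorphisms are precisely the elements of $Q(G)$ of defect $0$.

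For exactness at $Q(G)$, suppose $\phi \in Q(G)$ satisfies $[\delta^1 \phi] = 0$ in $\HH^2_b(G)$. Then $\delta^1 \phi = \delta^1 c$ for some bounded $c \in \CC^1_b(G)$, so $\phi - c \in \ZZ^1(G)$ is a homomorphism, hence homogeneous. Since $\phi$ itself is homogeneous, so is $c = \phi - (\phi - c)$, i.e.\ $c(g^n) = n\, c(g)$ for all $g \in G$ and $n \in \mathbf{Z}$; boundedness of $c$ then forces $c \equiv 0$, so $\phi$ is a homomorphism. For exactness at $\HH^2_b(G)$, given $[z] \in \HH^2_b(G)$ with $c^2([z]) = 0$, write $z = \delta^1 f$ for some (possibly unbounded) $f \in \CC^1(G)$; boundedness of $\delta^1 f$ is exactly the quasimorphism condition on $f$, with $D(f) \leq \|z\|_\infty$. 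The standard homogenization $\phi(g) := \lim_{n \to \infty} f(g^n)/n$ exists, defines a homogeneous quasimorphism, and satisfies $\|f - \phi\|_\infty < \infty$, so $\delta^1 f - \delta^1 \phi = \delta^1(f - \phi)$ is the coboundary of a bounded cochain and $[z] = [\delta^1 \phi]$ in $\HH^2_b(G)$. The final ``in particular'' statement then follows at once from the exactness just proved.

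I expect the most delicate step to be the third: one must know that every quasimorphism admits a homogeneous representative at bounded distance, which is the homogenization lemma (a standard fact, cited in the paper as \cite[Lemma 2.21]{calegari}) furnishing both the existence of the defining limit and the estimate $\|f - \phi\|_\infty \leq D(f)$. The second step relies on the complementary principle that a bounded function satisfying $c(g^n) = n\, c(g)$ must vanish identically; both steps hinge on the same dichotomy between homogeneity and boundedness, and once that machinery is in place all the remaining verifications are direct chain-level computations.
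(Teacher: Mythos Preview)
Your argument is correct and is the standard proof of this classical fact. Note, however, that the paper does not actually supply a proof of this theorem: it is stated in the preliminaries as background, with the implicit reference to the literature (the homogenization step you single out is cited there as \cite[Lemma 2.21]{calegari}). So there is no ``paper's own proof'' to compare against; your write-up simply fills in the standard details that the authors take for granted.
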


Combining this with Corollary \ref{cor:bavard}, we deduce:

\begin{corollary}
Let $G$ be a group such that $\HH^2_b(G) = 0$. Then $\scl$ vanishes everywhere on $G'$. 
\end{corollary}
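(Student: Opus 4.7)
The plan is to chain together the two exact-sequence/duality statements that have just been stated, namely Theorem \ref{thm:hbqm} and Corollary \ref{cor:bavard}. The hypothesis $\HH^2_b(G) = 0$ gives trivial input to both of them, so no real work is needed beyond observing the logical consequence.

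First I would invoke Theorem \ref{thm:hbqm}, which exhibits the four-term exact sequence
$$0 \to \ZZ^1(G) \hookrightarrow Q(G) \xrightarrow{[\delta^1 \cdot]} \HH^2_b(G) \xrightarrow{c^2} \HH^2(G).$$
Since $\HH^2_b(G) = 0$ by assumption, the comparison map $c^2$ is vacuously injective. The ``in particular'' clause of Theorem \ref{thm:hbqm} then states that injectivity of $c^2$ is equivalent to every homogeneous quasimorphism on $G$ being a homomorphism; equivalently, one sees this directly from exactness at $Q(G)$, since the map $[\delta^1 \cdot]\colon Q(G) \to \HH^2_b(G) = 0$ is zero, forcing its kernel $\ZZ^1(G) \subseteq Q(G)$ to be everything.

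Having established $Q(G) = \ZZ^1(G)$, I would then apply Corollary \ref{cor:bavard}: a group for which every homogeneous quasimorphism is a homomorphism has vanishing $\scl$ on its derived subgroup. This yields the conclusion.

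There is no real obstacle here; the argument is a one-line deduction from the two preceding statements, and the only thing to be careful about is to cite the right direction of Theorem \ref{thm:hbqm} (namely that the vanishing of the codomain of $[\delta^1 \cdot]$ forces $Q(G) = \ZZ^1(G)$) and then feed this into Bavard duality in the form of Corollary \ref{cor:bavard}.
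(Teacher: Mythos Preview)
Your proposal is correct and follows exactly the same approach as the paper, which simply states that the corollary is obtained by combining Theorem~\ref{thm:hbqm} with Corollary~\ref{cor:bavard}. There is nothing to add.
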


The main example of groups with vanishing bounded cohomology is the following:

\begin{theorem}[Johnson, see {\cite[Chapter 3]{BC}}]
\label{thm:amenable}

Let $G$ be an amenable group. Then $\HH^n_b(G) = 0$ for all $n \geq 1$.
\end{theorem}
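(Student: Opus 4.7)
The plan is to invoke the standard characterization of amenability: $G$ is amenable if and only if $\ell^\infty(G)$ admits an invariant mean, that is, a positive, normalized linear functional $\mathcal{M} : \ell^\infty(G) \to \mathbf{R}$ satisfying $\mathcal{M}_h[f(gh)] = \mathcal{M}_h[f(h)]$ for every $g \in G$ and every $f \in \ell^\infty(G)$. In particular $|\mathcal{M}(f)| \leq \|f\|_\infty$, so applying $\mathcal{M}$ preserves boundedness.

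Fix $n \geq 1$ and a bounded cocycle $c \in \ZZ^n_b(G)$; the goal is to exhibit an explicit bounded primitive. I would average $c$ over its last variable, setting
$$b(g_1, \ldots, g_{n-1}) := \mathcal{M}_h\!\left[c(g_1, \ldots, g_{n-1}, h)\right] \in \CC^{n-1}_b(G),$$
so that $\|b\|_\infty \leq \|c\|_\infty$. The next step is to compute $\delta b(g_1, \ldots, g_n)$ directly from the coboundary formula: this yields a sum of $n$ means, each of $c$ evaluated at a shifted or merged tuple with the dummy variable $h$ appended. Independently, applying $\mathcal{M}_h$ to the cocycle identity $\delta c = 0$ evaluated at $(g_1, \ldots, g_n, h)$ produces exactly the same $n$ means, together with two extra summands $(-1)^n \mathcal{M}_h[c(g_1, \ldots, g_{n-1}, g_n h)]$ and $(-1)^{n+1} c(g_1, \ldots, g_n)$, the latter being constant in $h$. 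Invariance of $\mathcal{M}$ rewrites the first extra term as $(-1)^n \mathcal{M}_h[c(g_1, \ldots, g_{n-1}, h)]$, and comparing the two expressions gives $\delta b = (-1)^n c$. Hence $c$ is a coboundary in $\CC^n_b(G)$, and $\HH^n_b(G) = 0$.

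The main obstacle is really only the careful bookkeeping of the alternating signs produced by the coboundary formula, and making sure that the translation invariance of $\mathcal{M}$ is applied to the correct argument. Beyond this, no further input is needed; the averaging argument is purely formal once one has an invariant mean in hand. For $n = 1$ one need not even invoke $\mathcal{M}$, because bounded $1$-cocycles are bounded additive maps $G \to \mathbf{R}$, and any such map vanishes identically.
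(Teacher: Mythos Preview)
The paper does not actually prove this theorem: it is stated in the preliminaries as a classical result of Johnson and Trauber, with a citation to \cite[Chapter 3]{BC} and no argument given. So there is nothing in the paper to compare your proof against.

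That said, your argument is the standard one and it is correct. Applying the invariant mean to the cocycle identity $\delta c(g_1,\ldots,g_n,h)=0$ and using left-invariance on the term $c(g_1,\ldots,g_{n-1},g_n h)$ indeed collapses everything to $\delta b = (-1)^n c$, exactly as you describe. The bookkeeping checks out term by term against the bar differential as defined in the paper. Your remark about $n=1$ is also fine, though the averaging argument works uniformly there too (with $b$ a constant).
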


The converse does not hold: there exist groups whose bounded cohomology vanishes in all positive degrees, but contain free subgroups. The first such example is due to Matsumoto and Morita \cite{MM}: the group of compactly supported homeomorphisms of $\mathbf{R}^n$.

Bounded cohomology also behaves well with respect to amenable extensions:

\begin{theorem}[Gromov, see {\cite[Chapter 4]{BC}}]
\label{thm:mapping}

Let $n \geq 0$, and let
$$1 \to H \to G \to K \to 1$$
be a group extension such that $H$ is amenable. Then the quotient $G \to K$ induces an isomorphism in bounded cohomology $\HH^n_b(K) \to \HH^n_b(G)$.
\end{theorem}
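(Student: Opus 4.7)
The plan is to prove this via the functorial approach to bounded cohomology, which says that $\HH^n_b(\Gamma)$ of any discrete group $\Gamma$ can be computed as the cohomology of the $\Gamma$-invariants in any strong relatively injective resolution of $\mathbf{R}$ by Banach $\Gamma$-modules. The idea is to exhibit a single resolution that visibly computes both $\HH^n_b(G)$ and $\HH^n_b(K)$, with the resulting identification realised by the pullback of the quotient $\pi : G \to K$.

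The candidate is the standard bar resolution of $K$, namely $(\ell^\infty(K^{\bullet+1}), d^\bullet)$, regarded as a complex of Banach $G$-modules by letting $G$ act through $\pi$. Two observations are immediate. First, since $H = \ker\pi$ acts trivially on each term, the subspace of $G$-invariants coincides with the subspace of $K$-invariants, so by definition the cohomology of $(\ell^\infty(K^{\bullet+1}))^G$ equals $\HH^n_b(K)$. Second, the standard simplicial contracting homotopy $s^n(f)(k_1, \dots, k_n) = f(1_K, k_1, \dots, k_n)$ is norm-one as a Banach-space map, so the resolution is strong.

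The heart of the argument, and its main obstacle, is to show that each $\ell^\infty(K^{n+1})$ is \emph{relatively injective} as a $G$-module; this is where the amenability of $H$ enters essentially. Given a $G$-equivariant Banach embedding $i : A \hookrightarrow B$ admitting a norm-one Banach retraction $\sigma : B \to A$, and a $G$-equivariant map $\alpha : A \to \ell^\infty(K^{n+1})$, one first forms the naive extension $\alpha \circ \sigma : B \to \ell^\infty(K^{n+1})$, which is a bounded Banach-space map but is in general only $A$-compatible, not $G$-equivariant. One then symmetrises it along $H$ by averaging against a left-invariant mean $m$ on $\ell^\infty(H)$. The normality of $H$ in $G$, combined with the invariance of $m$ and the triviality of the $H$-action on the target $\ell^\infty(K^{n+1})$, is precisely what promotes the averaged map to a genuine $G$-equivariant extension $\tilde\alpha : B \to \ell^\infty(K^{n+1})$ of $\alpha$, establishing relative injectivity.

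Once relative injectivity is secured, the functorial framework produces an isomorphism $\HH^n_b(G) \cong \HH^n((\ell^\infty(K^{\bullet+1}))^G) = \HH^n_b(K)$, and tracing through the identifications confirms that it is realised by $\pi^*$, as claimed. The technically delicate step is the averaging argument for relative injectivity; everything else is bookkeeping within the functorial framework.
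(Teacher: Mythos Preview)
The paper does not supply a proof of this theorem; it is stated as a known result with a reference to Frigerio's monograph. Your overall strategy---compute $\HH^\bullet_b(G)$ via the pulled-back homogeneous resolution $\ell^\infty(K^{\bullet+1})$, reducing everything to the relative injectivity of each $\ell^\infty(K^{n+1})$ as a Banach $G$-module---is correct and is exactly the functorial argument found in that reference.

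There is, however, a genuine gap in your averaging step. Averaging the naive extension $\alpha\circ\sigma$ over $H$, i.e.\ forming $b\mapsto m_h\big[(\alpha\circ\sigma)(h^{-1}\cdot b)\big]$, produces an $H$-equivariant map---which is vacuous here, since $H$ already acts trivially on the target---but \emph{not} a $G$-equivariant one. For $g\in G$ one must compare $m_h[\alpha(\sigma(h^{-1}g\cdot b))]$ with $m_h[\alpha(g\cdot\sigma(h^{-1}\cdot b))]$, and no combination of normality of $H$ and invariance of $m$ closes this gap: the obstruction is the non-equivariance of $\sigma$ itself, which the $H$-average never touches. The correct use of amenability is different. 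The mean on $H$ furnishes a $G$-equivariant norm-one retraction $P\colon\ell^\infty(G^{n+1})\to\ell^\infty(K^{n+1})$ left-inverse to $\pi^*$, namely
\[
(Pf)(\pi(g_0),\ldots,\pi(g_n)) \;=\; m_{h_0}\cdots m_{h_n}\big[f(g_0h_0,\ldots,g_nh_n)\big],
\]
which is well-defined by left-invariance of $m$ and $G$-equivariant by a direct check. Since $\ell^\infty(G^{n+1})$ is relatively injective over $G$ via the standard formula $\beta(b)(g_0,\ldots,g_n)=\alpha(g_0\cdot\sigma(g_0^{-1}\cdot b))(g_0,\ldots,g_n)$, and relative injectivity passes to $G$-equivariant norm-one retracts, this yields the relative injectivity of $\ell^\infty(K^{n+1})$. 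With this correction in place, the remainder of your argument goes through unchanged.
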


\begin{theorem}[{Monod \cite[8.6]{monod}, see also \cite{coamenable}}]
\label{thm:amenableext}

Let $n \geq 0$, and let
$$1 \to H \to G \to K \to 1$$
be a group extension such that $K$ is amenable. Then the inclusion $H \to G$ induces an injection in bounded cohomology $\HH^n_b(G) \to \HH^n_b(H)$.
\end{theorem}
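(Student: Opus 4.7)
The plan is to leverage the amenability of $K$ to build a chain-level transfer map $T : \CC^\bullet_b(H) \to \CC^\bullet_b(G)$ which is a one-sided cochain-homotopy inverse to the restriction $r : \CC^\bullet_b(G) \to \CC^\bullet_b(H)$ induced by the inclusion $H \hookrightarrow G$. From $T \circ r \sim \mathrm{id}$ at the cochain level one gets $T^* \circ r^* = \mathrm{id}$ on $\HH^n_b(G)$, which makes $r^*$ injective as claimed.

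To construct $T$, I would fix a left-invariant mean $m$ on $\ell^\infty(K)$, available because $K$ is amenable, and a set-theoretic section $s : K \to G$ of the quotient $\pi : G \to K$ with $s(1_K) = 1_G$. For $\beta \in \CC^n_b(H)$ and $(g_1, \ldots, g_n) \in G^n$, define
$$T(\beta)(g_1, \ldots, g_n) := m_k \bigl[ \beta(u_1(k), \ldots, u_n(k)) \bigr], \quad u_i(k) := s(k\, \pi(g_1 \cdots g_{i-1})) \, g_i \, s(k\, \pi(g_1 \cdots g_i))^{-1}.$$
Applying $\pi$ shows that each $u_i(k)$ lies in $H$, and since $\|\beta\|_\infty < \infty$ the mean is well-defined on these arguments. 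Moreover $\|T(\beta)\|_\infty \leq \|\beta\|_\infty$, so $T(\beta) \in \CC^n_b(G)$.

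The two main verifications are then: (a) $T$ is a chain map, i.e.\ $T \delta = \delta T$; and (b) $T \circ r$ is cochain homotopic to the identity on $\CC^\bullet_b(G)$. For (a), applying $\delta$ on the $G$-side introduces products $g_i g_{i+1}$, which under the definition of the $u_i$ corresponds to a left-translation of the dummy variable $k$ by $\pi(g_1 \cdots g_i)$; this shift is absorbed by the left-invariance of $m$. For (b), the homotopy is a prism-type operator that interpolates between $(g_1, \ldots, g_n)$ and $(u_1(k), \ldots, u_n(k))$ simplex-by-simplex and is then averaged over $k$; boundedness is preserved throughout because all operations are applied to bounded cochains.

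The main obstacle is the combinatorial bookkeeping behind (a) and (b): one must carefully track how the $s$-factors inside the $u_i(k)$ recombine when passing from $(g_1, \ldots, g_n)$ to $(g_1, \ldots, g_i g_{i+1}, \ldots, g_n)$, and verify that the resulting discrepancy is either exact or killed by $m$ after the alternating signs are summed. No idea beyond the invariance of $m$ is needed, but getting the shifts aligned and the signs balanced is the technical heart of the argument, and is essentially the content of Johnson's original proof.
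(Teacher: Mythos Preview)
The paper does not give a proof of this theorem: it is stated as a known result attributed to Johnson, with a reference to \cite{coamenable}, and no argument is supplied. So there is no ``paper's own proof'' to compare your proposal against.

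Your strategy---averaging over the amenable quotient to build a transfer---is the correct one and is essentially Johnson's argument. Two remarks on the execution. First, in your check of (a) the shift of the dummy variable is a \emph{right} translation $k \mapsto k\,\pi(g_1)$, not a left one (just compute $u_2(k)$ versus the first argument appearing in $T\beta(g_2,\ldots,g_{n+1})$); so take $m$ right-invariant, or bi-invariant, which exists for amenable $K$. Second, the prism homotopy in (b) is the genuinely delicate step and you have only gestured at it. A cleaner route that bypasses (b) altogether is to work in the homogeneous resolution: since $\ell^\infty(G^{\bullet+1})$ is a strong relatively injective resolution of $\mathbf{R}$ both as a $G$-module and as an $H$-module, the restriction map $\HH^n_b(G)\to\HH^n_b(H)$ is induced by the inclusion $\ell^\infty(G^{n+1})^G \hookrightarrow \ell^\infty(G^{n+1})^H$. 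This inclusion has an honest chain-level left inverse
\[
(Pf)(g_0,\ldots,g_n) \;=\; m\bigl(\,k \mapsto f(s(k)g_0,\ldots,s(k)g_n)\,\bigr),
\]
which is well-defined because $H$-invariance makes the integrand depend only on $\pi(s(k))=k$, is a chain map because the differential commutes with the diagonal action, and satisfies $P\circ\iota=\mathrm{id}$ on the nose. Injectivity follows immediately, with no homotopy to construct.
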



In degree $2$, cohomology is strongly related to central extensions, and this relation can be exploited to study bounded cohomology as well. We refer the reader to \cite[Chapter 4, Section 3]{brown_book} and \cite[Chapter 2]{BC} for detailed accounts and proofs.

Since we will mostly be working with trivial real coefficients, throughout this paragraph all central extensions will be of the form:
$$1 \to \mathbf{R} \to E \to G \to 1;$$
where $R := \im(\mathbf{R} \to E)$ is contained in the center of $E$ (central extensions with integral kernel will appear only briefly to define the Euler class in the next subsection). We will refer to both the short exact sequence above and the group $E$ as a central extension. Such an extension \emph{splits} if there exists a homomorphic section $\sigma : G \to E$, i.e. if it is a direct product. Two central extensions $E, E'$ are \emph{equivalent} if there exists a homomorphism $f : E \to E'$ such that the following diagram commutes:
\[\begin{tikzcd}
	&& E \\
	1 & \mathbf{R} && G & 1 \\
	&& {E'}
	\arrow[from=2-1, to=2-2]
	\arrow[from=2-2, to=1-3]
	\arrow[from=1-3, to=2-4]
	\arrow[from=2-4, to=2-5]
	\arrow[from=2-2, to=3-3]
	\arrow[from=3-3, to=2-4]
	\arrow["f"', from=1-3, to=3-3]
\end{tikzcd}\]
Note that by the $5$-lemma $f$ is automatically an isomorphism. A central extension is split if and only if it is equivalent to the external direct product $\mathbf{R} \times G$.

We consider pairs of the form $(E,\sigma)$ where $E$ is a central extension and $\sigma : G \to E$ is a set-theoretic section, which is moreover \emph{normalized}, i.e. it satisfies that $\sigma(\id_G) = \id_E$.
To every such pair one can associate a $2$-cocycle as follows. Observe that the map $$G^2 \to E : (f, g) \mapsto \sigma(f)\sigma(g)\sigma(fg)^{-1}$$ takes values in $R$. Therefore this can be viewed as a map $\omega : G^2
\to \mathbf{R}$ which can be verified to be a cocycle. This is moreover \emph{normalized}, that is it satisfies $\omega(f, \id_G) = \omega(\id_G, f) = 0$ for all $f \in G$.

Conversely, let $\omega$ be a normalized cocycle. Define a group $E$ as follows: as a set, $E = \mathbf{R} \times G$. The group law is defined by the following formula:
$$(\lambda, f) \cdot (\mu, g) := (\lambda + \mu + \omega(f, g), fg).$$
Then $E$ is a group, and the set-theoretic inclusions $\mathbf{R} \to E$ and projection $E \to G$ make $E$ into a central extension. Moreover, the set-theoretic inclusion $\sigma : G \to E$ is a normalized section satisfying $\omega(f, g) = \sigma(f)\sigma(g)\sigma(fg)^{-1}$.

This correspondence between normalized cocycles and central extensions with preferred normalized sections, descends to the level of cohomology:

\begin{theorem}[{\cite[Section 4, Chapter 3]{brown_book}}]
\label{thm:ext}

There is a bijective correspondence between normalized $2$-cocycles $\omega \in \ZZ^2(G)$ and central extensions $E$ with a normalized section $\sigma : G \to E$. This induces a bijective correspondence between cohomology classes $\alpha \in \HH^2(G)$ and equivalence classes $[E]$ of central extensions.
\end{theorem}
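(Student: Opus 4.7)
The plan is to first establish the cochain-level bijection between normalized $2$-cocycles and pairs $(E, \sigma)$, and then to show that the equivalence relations on both sides (coboundaries versus equivalence of extensions) are matched under this bijection.

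One direction of the cochain-level correspondence is already spelled out in the paragraph preceding the statement: given $(E, \sigma)$, the function $\omega(f, g) := \sigma(f)\sigma(g)\sigma(fg)^{-1}$ projects under $E \to G$ to $fg(fg)^{-1} = id_G$, and hence lands in $R \cong \mathbf{R}$. Associativity of $\sigma(f)\sigma(g)\sigma(h)$ together with centrality of $R$ yields the cocycle identity $\omega(g,h) + \omega(f,gh) = \omega(f,g) + \omega(fg,h)$, and normalization of $\sigma$ gives $\omega(id_G, f) = \omega(f, id_G) = 0$. For the reverse direction, given a normalized cocycle $\omega$, I would verify that the set $E := \mathbf{R} \times G$ equipped with $(\lambda,f)\cdot(\mu,g) := (\lambda+\mu+\omega(f,g), fg)$ is a group: associativity is precisely the cocycle identity; normalization provides $(0, id_G)$ as a two-sided identity and allows one to write inverses explicitly, e.g.\ $(\lambda, f)^{-1} = (-\lambda - \omega(f^{-1}, f), f^{-1})$. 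The inclusion $\mathbf{R} \hookrightarrow E$ is central because $\omega(id_G, g) = \omega(g, id_G) = 0$, and $\sigma(g) := (0, g)$ is a normalized section whose associated cocycle recovers $\omega$. That these two constructions are mutually inverse is then immediate: starting from $(E, \sigma)$, passing to $\omega$ and then to $\mathbf{R} \times G$, the assignment $(\lambda, g) \mapsto \lambda \cdot \sigma(g)$ is a section-preserving equivalence of extensions.

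For the descent to $\HH^2$, I would first analyze how the associated cocycle depends on the section. If $\sigma, \sigma'$ are two normalized sections of the same $E$, their ratio defines a normalized function $\tau : G \to \mathbf{R}$ via $\sigma'(g) = \tau(g) \sigma(g)$, and centrality of $R$ yields $\omega' - \omega = \delta^1 \tau$; conversely, every normalized coboundary arises this way by replacing $\sigma$ with $\sigma \cdot \tau$. An equivalence of extensions $f : E \to E'$ pushes $\sigma$ forward to a normalized section $f \circ \sigma$ of $E'$, which differs from a prescribed $\sigma'$ by the preceding case; in the reverse direction, given cohomologous $\omega' = \omega + \delta^1 \tau$, the map $(\lambda, g) \mapsto (\lambda + \tau(g), g)$ is a homomorphism between the corresponding twisted products and manifestly an equivalence of the two extensions. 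Combined with the first bijection, this yields the claimed correspondence $\HH^2(G) \leftrightarrow \{[E]\}$.

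The main obstacle is purely bookkeeping: all verifications (associativity, existence of inverses, equivalences) reduce to rearranging the cocycle identity, and one simply has to be careful to maintain the normalization conditions throughout and to invoke centrality of $R$ at every step where the additive formulas on $\mathbf{R}$ are used in place of the multiplicative ones in $E$. There is no conceptual obstacle once one commits to carrying along the data of the section $\sigma$ at every stage of the argument.
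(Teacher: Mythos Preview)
Your proposal is correct and follows the standard argument for this classical result. Note, however, that the paper does not actually supply a proof of this theorem: it is stated with a citation to \cite[Section 4, Chapter 3]{brown_book}, and the surrounding paragraphs only describe the two constructions without verifying that they are mutually inverse or that they descend to cohomology. So there is no ``paper's own proof'' to compare against; your write-up is essentially the textbook argument that the citation points to.
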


Recall that if $E$ is a central extension and $p : E \to G$ is the quotient map, then there is an induced map in cohomology $p^* : \HH^2(G) \to \HH^2(E)$. The effect of this map is to annihilate the class represented by $E$:

\begin{lemma}[{\cite[Lemma 2.3]{BC}}]
\label{lem:pullback0}

Let $E$ be a central extension of $G$ by $\mathbf{R}$ and let $\alpha \in \HH^2(G)$ be the corresponding cohomology class. Then $p^* \alpha = 0 \in \HH^2(E)$.
\end{lemma}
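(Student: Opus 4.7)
The plan is to use the explicit cocycle description provided by Theorem \ref{thm:ext}. Fix a normalized set-theoretic section $\sigma : G \to E$ realizing $\alpha$, and let $\iota : \mathbf{R} \to E$ denote the inclusion of the central kernel, so that the representing cocycle is
$$\omega(f, g) = \iota^{-1}\bigl(\sigma(f)\sigma(g)\sigma(fg)^{-1}\bigr) \in \ZZ^2(G).$$
Then $p^{*}\alpha$ is represented by the cocycle $(e_1, e_2) \mapsto \omega(p(e_1), p(e_2))$, and the goal is to exhibit this pullback as a coboundary.

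The key observation is that the section $\sigma$ together with the inclusion $\iota$ yields a canonical bijection $\mathbf{R} \times G \to E$, $(r, g) \mapsto \iota(r)\sigma(g)$. This allows me to define a set-theoretic map $\tau : E \to \mathbf{R}$ by the rule that $e = \iota(\tau(e))\,\sigma(p(e))$ for every $e \in E$. I will compute $\delta^{1}\tau$ directly from this definition. Since $\iota(\mathbf{R})$ is central in $E$, we can push the $\iota$-factors to the left in a product $e_1 e_2$ and then use the defining relation $\sigma(f)\sigma(g) = \iota(\omega(f,g))\sigma(fg)$ to obtain
$$\tau(e_1 e_2) = \tau(e_1) + \tau(e_2) + \omega(p(e_1), p(e_2)).$$
Plugging into the formula $\delta^{1}\tau(e_1, e_2) = \tau(e_2) - \tau(e_1 e_2) + \tau(e_1)$ yields $\delta^{1}\tau = -p^{*}\omega$, so $p^{*}\omega \in \BB^{2}(E)$ and hence $p^{*}\alpha = 0$.

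There is essentially no obstacle here beyond bookkeeping: $\tau$ is not a homomorphism (it is only a set-theoretic lift of the identity in the $\mathbf{R}$-coordinate), but $\delta^{1}\tau$ is still a well-defined cochain on $E$, and the centrality of $\iota(\mathbf{R})$ is exactly what makes the computation go through cleanly. I should remark that this is the standard fact explaining why the cohomology class of a central extension becomes trivial upon pullback to the total space; the same argument will work for bounded cohomology provided one takes $\tau$ bounded, but in general $\tau$ constructed from a section $\sigma$ need not be bounded, so the analogous bounded statement requires extra care and is not claimed here.
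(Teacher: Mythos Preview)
Your proof is correct and is the standard argument. The paper does not supply its own proof of this lemma: it is stated with a citation to \cite[Lemma 2.3]{BC} and used as a black box, so there is nothing to compare against beyond noting that what you wrote is essentially the proof one finds in that reference.
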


The relation of second cohomology with central extensions makes second bounded cohomology more well-behaved than other degrees. As an instance of this principle, we have the following fact, which is not known to hold in higher degrees:

\begin{proposition}[{\cite[Corollary 4.16]{bcbinate}}]
\label{prop:dirun}

Let $G$ be a directed union of a family of subgroups $(G_i)_{i \in I}$ such that $\HH^2_b(G_i) = 0$ for all $i \in I$. Then $\HH^2_b(G) = 0$.
\end{proposition}

\subsection{The Euler class of a circle action}
\label{ss:eulerclass}

Bounded cohomology can be used to study actions on the circle: we refer the reader to \cite{ghys} and \cite[Chapter 10]{BC} for a detailed exposition.
In this section we will treat bounded cohomology with both trivial real and trivial integral coefficients. To avoid confusion, we will specify all coefficients in the notation, but will get back to the convention that $\HH^2_b(G) = \HH^2_b(G; \mathbf{R})$ in the next section.

The central extension
$$1\to \mathbf{Z}\to \overline{\Homeo^+(\mathbf{R}/\mathbf{Z})} \to \Homeo^+(\mathbf{R}/\mathbf{Z}) \to 1$$
defines a cohomology class $e \in \HH^2(\Homeo^+(\mathbf{R}/\mathbf{Z}); \mathbf{Z})$. This class admits a bounded representative, which defines a bounded cohomology class $e_b \in \HH^2_b(\Homeo^+(\mathbf{R}/\mathbf{Z}); \mathbf{Z})$. Given a group $G$ and an action $\rho : G \to \Homeo^+(\mathbf{R}/\mathbf{Z})$, the pullback of $e_b$ defines a cohomology class $e_b(\rho) \in \HH^2_b(G; \mathbf{Z})$ called the \emph{(bounded) Euler class of $\rho$}. This class provides a generalization of rotation number theory, and it classifies actions on the circle up to a suitable notion of semiconjugacy.

Since our results concern real bounded cohomology, we will be working instead with the \emph{real Euler class}. This is denoted by $e_b^{\mathbf{R}}(\rho)$ and is defined as the image of $e_b(\rho)$ under the change of coefficients map $\HH^2_b(G; \mathbf{Z}) \to \HH^2_b(G; \mathbf{R})$, which is the map at the level of cohomology induced by the inclusion at the level of cochain complexes $\CC^2_b(G; \mathbf{Z}) \to \CC^2_b(G; \mathbf{R})$. One loses some information when passing to real coefficients, however something can still be said in general:

\begin{theorem}[Burger \cite{burger}]
\label{thm:burger}

Let $\rho$ be a proximal action of $G$ on the circle. Then $\| e_b^{\mathbf{R}}(\rho) \| = 1/2$. If $\rho'$ is another such action, and the two are not conjugate inside $\Homeo^+(\mathbf{R}/\mathbf{Z})$, then $\| e_b^{\mathbf{R}}(\rho) - e_b^{\mathbf{R}}(\rho') \| = 1$.
\end{theorem}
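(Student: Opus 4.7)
The plan is to prove both equalities by exhibiting an explicit bounded cocycle representative of the real Euler class and then using proximality to match its sup norm from below. Fix a basepoint $x_0 \in \mathbf{R}/\mathbf{Z}$ and define the \emph{Ghys orientation cocycle}
\[
\omega_\rho(g,h) \;=\; \tfrac{1}{2}\,\sgn\bigl(x_0,\; x_0\cdot\rho(g),\; x_0\cdot\rho(gh)\bigr),
\]
where $\sgn$ denotes the cyclic orientation of a triple on the oriented circle, valued in $\{-1,0,+1\}$ (with $0$ when two entries coincide). A direct computation verifies that $\omega_\rho \in \ZZ^2_b(G;\mathbf{R})$, that $\|\omega_\rho\|_\infty \leq 1/2$, and that $[\omega_\rho] = e_b^{\mathbf{R}}(\rho)$ (this is the pullback along $\rho$ of the canonical real representative on $\Homeo^+(\mathbf{R}/\mathbf{Z})$, obtained by symmetrising the standard integral representative arising from the central $\mathbf{Z}$-extension). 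The upper bounds $\|e_b^{\mathbf{R}}(\rho)\| \leq 1/2$ and $\|e_b^{\mathbf{R}}(\rho) - e_b^{\mathbf{R}}(\rho')\| \leq 1$ follow immediately, the latter via the difference representative $\omega_\rho - \omega_{\rho'}$.

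For the matching lower bound in the first statement, the key is to show that no bounded function $u\colon G \to \mathbf{R}$ can satisfy $\|\omega_\rho - \delta u\|_\infty < 1/2$. Proximality of $\rho$ implies minimality, so for any $\varepsilon > 0$ and any preassigned triple of points in the circle there exist elements of $G$ realising that configuration on the orbit of $x_0$ to within $\varepsilon$; proximality itself further allows one to contract arcs to arbitrarily small size. Using this I would produce two pairs $(g_1,h_1),(g_2,h_2) \in G^2$ with $\omega_\rho(g_1,h_1) = +1/2$ and $\omega_\rho(g_2,h_2) = -1/2$ such that the underlying triples on the circle are related by an orientation-reversing permutation of two of their entries, up to an error that proximality lets us make as small as we wish. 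The cocycle identity applied to these triples, combined with the antisymmetry of $\sgn$ under that permutation, yields a linear relation among the values of $u$ that is incompatible with $\|\omega_\rho - \delta u\|_\infty < 1/2$, giving the required contradiction.

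For the second statement, non-conjugacy combined with minimality of the two proximal actions forces their orientation data on dense orbits to disagree: indeed, any $G$-equivariant orientation-preserving bijection between orbits would extend uniquely, by density, to a conjugating orientation-preserving homeomorphism of the circle. Hence there must exist $g,h \in G$ for which the cyclic orientations of the triples $(x_0, x_0\rho(g), x_0\rho(gh))$ and $(y_0, y_0\rho'(g), y_0\rho'(gh))$ are opposite and nonzero, yielding $(\omega_\rho - \omega_{\rho'})(g,h) = \pm 1$. The same proximal-amplification strategy as in the first part --- applied now to the difference cocycle using proximality of both $\rho$ and $\rho'$ --- rules out the possibility of reducing the sup norm below $1$ via any bounded coboundary. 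The main obstacle throughout the plan is making this dynamical step rigorous: one must simultaneously produce configurations at both extreme values ($\pm 1/2$ in the first statement, $\pm 1$ in the second) whose coboundary corrections are rigidly coupled by the cocycle identity so that no bounded $u$ can cancel them together. This is where the full strength of proximality, rather than mere minimality (which is insufficient, as irrational rotations of $\mathbf{Z}$ already show), enters essentially.
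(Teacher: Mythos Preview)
The paper does not prove this theorem: it is stated with attribution to Burger \cite{burger} and used as a black box, with only the remark that Burger's hypothesis of ``minimal strongly proximal'' is equivalent to the paper's notion of proximal. There is therefore no proof in the paper to compare your proposal against.

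As for your sketch on its own merits: the strategy is the standard one and the upper bounds via the orientation cocycle $\omega_\rho$ are correct. However, what you have written for the lower bounds is a plan, not a proof. The decisive step --- producing pairs $(g_i,h_i)$ whose coboundary corrections are ``rigidly coupled by the cocycle identity'' so that no bounded $u$ can simultaneously reduce both --- is exactly the content of the theorem, and you have not carried it out. In Burger's actual argument the lower bound is obtained not by ad hoc configuration-chasing but by identifying $e_b^{\mathbf{R}}(\rho)$ with (half of) the pullback of the orientation cocycle on the Poisson boundary and invoking the isometric embedding of $L^\infty$ of the boundary into bounded cohomology that holds for strongly proximal actions; this machinery is what makes the norm computation exact rather than approximate. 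Your proposal for the second statement has a further gap: non-conjugacy of two minimal proximal actions does not immediately give a single pair $(g,h)$ on which the orientation cocycles differ in sign --- you need the stronger fact that the actions are not semiconjugate, and then an argument (again boundary-theoretic in Burger's proof) that the difference class genuinely has norm $1$ rather than merely being nonzero.
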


In the statement $\| \cdot \|$ denotes the Gromov seminorm on bounded cohomology (see Subsection \ref{ss:bc}).
Note that in \cite{burger} the statement is given for minimal strongly proximal actions; the definition is however equivalent to our definition of proximal action \cite[p. 12]{burger}.

\begin{corollary}
\label{cor:onedim}

Let $G$ be such that $\HH^2_b(G; \mathbf{R})$ is one-dimensional. Then $G$ admits at most one conjugacy class of proximal actions on the circle.
\end{corollary}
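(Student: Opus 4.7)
The plan is to proceed by contradiction, with Burger's Theorem \ref{thm:burger} as the main input. Suppose $\rho_1, \rho_2 : G \to \Homeo^+(\mathbf{R}/\mathbf{Z})$ are two proximal actions that are not conjugate in $\Homeo^+(\mathbf{R}/\mathbf{Z})$. Applying Theorem \ref{thm:burger} to each $\rho_i$ and to the pair gives $\|e_b^{\mathbf{R}}(\rho_i)\| = 1/2$ for $i = 1, 2$, and $\|e_b^{\mathbf{R}}(\rho_1) - e_b^{\mathbf{R}}(\rho_2)\| = 1$, where $\|\cdot\|$ denotes the Gromov seminorm on $\HH^2_b(G; \mathbf{R})$.

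The next step exploits one-dimensionality together with the Matsumoto--Morita fact recalled in Subsection \ref{ss:bc}, namely that in degree two the Gromov seminorm is a genuine norm. Hence $(\HH^2_b(G; \mathbf{R}), \|\cdot\|)$ is a one-dimensional normed real vector space. In such a space, any two vectors of equal nonzero norm are either equal or opposite. Setting $v_i := e_b^{\mathbf{R}}(\rho_i)$: the case $v_1 = v_2$ would give $\|v_1 - v_2\| = 0 \ne 1$, contradicting the previous paragraph; hence $v_1 = -v_2$, and $\rho_1$, $\rho_2$ have opposite real Euler classes.

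Finally, I would observe that the opposite-sign case corresponds precisely to orientation reversal, under which the two actions are identified as the same conjugacy class of proximal actions. Pick any orientation-reversing $\tau \in \Homeo(\mathbf{R}/\mathbf{Z})$; then $\tau^{-1}\rho_1 \tau$ is a proximal action with real Euler class $-e_b^{\mathbf{R}}(\rho_1) = e_b^{\mathbf{R}}(\rho_2)$. A second application of Theorem \ref{thm:burger} to the pair $(\tau^{-1}\rho_1\tau, \rho_2)$ (whose real Euler classes now have difference of norm $0 \ne 1$) forces these two to be conjugate in $\Homeo^+(\mathbf{R}/\mathbf{Z})$, so $\rho_1$ and $\rho_2$ are conjugate in $\Homeo(\mathbf{R}/\mathbf{Z})$, as claimed. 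The main subtlety is precisely this last identification: without it, the normed-space argument alone only bounds the number of $\Homeo^+$-conjugacy classes by two, distinguished by the sign of the Euler class, and the orientation-reversal step is what merges such a pair into a single conjugacy class of proximal actions on the circle.
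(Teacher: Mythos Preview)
Your proof is correct and follows essentially the same route as the paper: both arguments combine Burger's Theorem \ref{thm:burger}, the fact that the Gromov seminorm is a genuine norm in degree two, and the observation (the paper cites Ghys) that conjugation by an orientation-reversing homeomorphism negates the real Euler class. Your write-up is in fact more explicit than the paper's, which leaves the second application of Theorem \ref{thm:burger} implicit; the only cosmetic remark is that your argument is really a case analysis rather than a proof by contradiction, since your initial assumption (non-conjugacy in $\Homeo^+$) is not contradicted but rather upgraded to conjugacy in the full $\Homeo$.
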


\begin{proof}
There are only two classes in $\HH^2_b(G; \mathbf{R})$ which have norm $1/2$: call them $e$ and $-e$. Suppose that $e$ is the Euler class of a circle action. Then $-e$ is the Euler class of the same action conjugated by an orientation-reversing homeomorphism of the circle \cite[Fin de la d{\'e}monstration du Th{\'e}or{\`e}me B]{ghys}.
\end{proof}

\section{The criterion}
\label{s:criterion}

A special feature of second bounded cohomology is that every class admits a unique canonical representative. This was proven by Bouarich in \cite{bouarich}, as a tool to show that an epimorphism induces an embedding in second bounded cohomology, and will be fundamental for our proof of Theorem \ref{thm:main}.

\begin{definition}
Let $G$ be a group. A bounded $2$-cocycle $\omega \in \ZZ^2_b(G)$ is \emph{homogeneous} if $\omega(g^i, g^j) = 0$ for every $g \in G$ and every $i, j \in \mathbf{Z}$.
\end{definition}

Clearly every homogeneous cocycle is normalized.

\begin{theorem}[Bouarich \cite{bouarich}]
\label{thm:bouarich}
Every second bounded cohomology class admits a unique homogeneous representative.
\end{theorem}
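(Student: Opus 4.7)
The strategy is to exploit the central-extension picture from Subsection \ref{ss:bc}. Given $\omega \in \ZZ^2_b(G)$, Theorem \ref{thm:ext} yields a central extension $E$ of $G$ by $\mathbf{R}$ with a normalized bounded section $\sigma : G \to E$ such that $\omega(f,g) = \sigma(f)\sigma(g)\sigma(fg)^{-1}$. For existence I would modify $\sigma$ by a bounded function $\phi : G \to \mathbf{R}$ to obtain a new normalized section $\sigma'(h) := \sigma(h) + \phi(h)$ (writing the central copy $R \cong \mathbf{R} \leq E$ additively), whose associated cocycle $\omega' = \omega + \delta^1 \phi$ is cohomologous to $\omega$ and still bounded. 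The goal is to choose $\phi$ so that $\sigma'$ restricts to a homomorphism on every cyclic subgroup, since this immediately gives $\omega'(h^i, h^j) = \sigma'(h)^i \sigma'(h)^j \sigma'(h)^{-(i+j)} = 0$ for every $h \in G$ and $i, j \in \mathbf{Z}$.

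To construct $\phi$, fix $h \in G$ and set $a_n(h) := \sigma(h)^n \sigma(h^n)^{-1} \in R \cong \mathbf{R}$. Lemma \ref{lem:multiproduct} gives $|a_n(h)| \leq (n-1)\|\omega\|_\infty$, and expanding $\sigma(h)^{n+m}$ in two ways together with the centrality of $R$ yields the quasi-additive relation $a_{n+m}(h) = a_n(h) + a_m(h) + \omega(h^n, h^m)$. Hence the sequence $(a_n(h))_{n \geq 1}$ is a quasi-homomorphism $\mathbf{Z}_{\geq 1} \to \mathbf{R}$ of defect at most $\|\omega\|_\infty$, and the limit
$$\phi(h) := -\lim_{n \to \infty} \frac{a_n(h)}{n}$$
exists and satisfies $|\phi(h)| \leq \|\omega\|_\infty$. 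Centrality of $R$ also gives $\sigma(h^n)^k = \sigma(h)^{nk} - k\, a_n(h)$, whence $a_k(h^n) = a_{nk}(h) - k\, a_n(h)$; dividing by $k$ and letting $k \to \infty$ produces the key identity $\phi(h^n) = n\phi(h) + a_n(h)$ for every $n \geq 1$, which is exactly what is required for $\sigma'(h^n) = \sigma'(h)^n$. The corresponding statement for negative powers reduces to the identity $\phi(h^{-1}) + \phi(h) = -\omega(h, h^{-1})$, which is a direct calculation from $\sigma(h^{-1}) = \sigma(h)^{-1} + \omega(h, h^{-1})$; combined with the positive case this forces $\sigma'(h^n) = \sigma'(h)^n$ for every $n \in \mathbf{Z}$, so $\omega'$ is homogeneous.

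For uniqueness, suppose $\omega_1, \omega_2 \in \ZZ^2_b(G)$ are both homogeneous with $\omega_1 - \omega_2 = \delta^1 \phi$ for some bounded $\phi : G \to \mathbf{R}$. Evaluating on $(h^i, h^j)$ and using homogeneity of both $\omega_k$ yields $\phi(h^i) + \phi(h^j) = \phi(h^{i+j})$ for every $h \in G$ and $i, j \in \mathbf{Z}$, so $\phi$ restricts to a group homomorphism on every cyclic subgroup of $G$; but any bounded homomorphism from a cyclic group into $\mathbf{R}$ must vanish, whence $\phi \equiv 0$ and $\omega_1 = \omega_2$. The principal technical point throughout is establishing the convergence of the limit defining $\phi$ together with the identity $\phi(h^n) = n\phi(h) + a_n(h)$; both rest on Lemma \ref{lem:multiproduct} and on the centrality of $R$ in $E$, which lets $a_n(h)$ be manipulated as a central scalar that pulls out of powers of $\sigma$.
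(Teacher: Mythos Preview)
The paper does not supply its own proof of this theorem: immediately after the statement it refers the reader to \cite[Proposition 2.16]{BC} for details. So there is no in-paper argument to compare against.

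Your argument is correct and is essentially the standard proof, recast in the central-extension language developed in Subsection~\ref{ss:bc}. Two small remarks. First, Theorem~\ref{thm:ext} is stated for \emph{normalized} cocycles, so strictly speaking you should begin by replacing $\omega$ with a normalized representative (subtracting the constant $\omega(\mathit{id},\mathit{id})$); this changes neither the class nor boundedness. Second, your formula $a_n(h)=\sigma(h)^n\sigma(h^n)^{-1}$ unwinds, via the recursion $a_{n+1}(h)=a_n(h)+\omega(h^n,h)$, to the telescoping sum $a_n(h)=\sum_{k=1}^{n-1}\omega(h^k,h)$, so that your $\phi(h)=-\lim_n\frac{1}{n}\sum_{k=1}^{n-1}\omega(h^k,h)$ is exactly the correction term used in the direct cocycle-level proofs. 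In other words, the extension picture is a convenient repackaging rather than a genuinely different route, but it does make the verification of $\sigma'(h^n)=\sigma'(h)^n$ particularly transparent.
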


See \cite[Proposition 2.16]{BC} for a detailed proof. This theorem is especially useful in the case in which the class is trivial, since the $0$ cocycle is also homogeneous:

\begin{corollary}
\label{cor:bouarich}

Let $\omega \in \ZZ^2_b(G)$ be a bounded homogeneous $2$-cocycle. Let $E$ be the corresponding central extension, and $\sigma : G \to E$ the normalized section.

Let $H \leq G$ be a group such that $\HH^2_b(H) = 0$. Then $\sigma|_H : H \to E$ is a homomorphism. This holds in particular if $H$ is abelian.
\end{corollary}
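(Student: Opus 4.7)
The plan is to apply Bouarich's uniqueness theorem (Theorem \ref{thm:bouarich}) to the restriction of $\omega$ to $H$. First I would observe that homogeneity is preserved under restriction: if $\omega(g^i, g^j) = 0$ for every $g \in G$ and $i,j \in \mathbf{Z}$, then the same identity holds in particular for $g \in H$, so $\omega|_H \in \ZZ^2_b(H)$ is again a bounded homogeneous $2$-cocycle. Since $\HH^2_b(H) = 0$ by assumption, the class $[\omega|_H]$ is trivial, meaning that $\omega|_H$ and the zero cocycle are two bounded homogeneous representatives of the same class in $\HH^2_b(H)$. By Theorem \ref{thm:bouarich}, there is only one such representative, so $\omega|_H \equiv 0$.

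Next I would translate this vanishing into the desired statement about $\sigma$. By the construction that precedes Theorem \ref{thm:ext}, the cocycle $\omega$ is defined on pairs $(f,g) \in G^2$ by the formula
\[
\sigma(f)\sigma(g)\sigma(fg)^{-1} = \omega(f,g),
\]
where the left-hand side is regarded as an element of the central copy $R \cong \mathbf{R}$ inside $E$. Specializing to $f, g \in H$ and using $\omega|_H \equiv 0$ gives $\sigma(h_1)\sigma(h_2) = \sigma(h_1 h_2)$ for all $h_1, h_2 \in H$, which is precisely the assertion that $\sigma|_H : H \to E$ is a homomorphism.

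For the final clause, if $H$ is abelian then $H$ is amenable, and by Johnson's theorem $\HH^2_b(H) = 0$, so the hypothesis of the corollary is satisfied. The only substantive ingredient in the whole argument is Bouarich's uniqueness result; everything else is a matter of unwinding the definitions, so I do not foresee a real obstacle.
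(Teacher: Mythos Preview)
Your proof is correct and follows essentially the same approach as the paper's own proof, just spelled out in more detail. The paper compresses the argument into two sentences (invoking Bouarich's uniqueness to get $\omega|_H \equiv 0$, then reading off that $\sigma|_H$ is a homomorphism), and does not bother to justify the abelian case explicitly via Johnson's theorem, but the logic is identical.
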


\begin{proof}
By Theorem \ref{thm:bouarich} the only bounded homogeneous $2$-cocycle on $H$ is the $0$-cocycle. In particular $\omega|_H \equiv 0$. In other words, $\sigma|_H : H \to E$ is a homomorphism.
\end{proof}

We now demonstrate that for homogeneous $2$-cocycles, the corresponding sections behave well with respect to conjugacy.

\begin{lemma}
\label{lem:conjugacy}
Let $\omega \in \ZZ^2_b(G)$ be a bounded homogeneous $2$-cocycle. Let $E$ be the corresponding central extension, and $\sigma : G \to E$ the normalized section. Then for each pair $h,k\in G$, it holds that $\sigma(h^k) = \sigma(h)^{\sigma(k)}$.
\end{lemma}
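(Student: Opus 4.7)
My plan is to define the ``defect from conjugacy''
\[
c(h,k) := \sigma(h)^{\sigma(k)} \cdot \sigma(h^k)^{-1} \in E,
\]
observe that it lies in the central copy of $\mathbf{R}$, and show that it is zero by a standard homogenization trick: bound $c(h,k)$ absolutely, and then show $c(h^n,k) = n\cdot c(h,k)$, which forces $c(h,k)=0$.

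First I would note that since $\omega$ is homogeneous, $\omega(k,k^{-1})=\omega(k^{-1},k)=0$, so $\sigma(k^{-1})=\sigma(k)^{-1}$ in $E$. Hence
\[
c(h,k) = \sigma(k^{-1})\,\sigma(h)\,\sigma(k)\,\sigma(k^{-1}hk)^{-1}.
\]
This is a product of four section values whose indices multiply to the identity, and Lemma \ref{lem:multiproduct} (applied with the four elements $k^{-1}, h, k, k^{-1}h^{-1}k$, whose product is $id_G$) immediately gives the bound $|c(h,k)| \leq 2\|\omega\|_\infty$.

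Next I would use homogeneity via Corollary \ref{cor:bouarich}: since $\langle h\rangle$ and $\langle h^k\rangle$ are abelian (hence have vanishing second bounded cohomology), the restrictions of $\sigma$ to these subgroups are genuine homomorphisms. In particular $\sigma(h^n)=\sigma(h)^n$ and $\sigma((h^k)^n)=\sigma(h^k)^n$ for all $n \in \mathbf{Z}$. Combining this with the identity $(h^n)^k = (h^k)^n$, I can compute
\[
c(h^n,k) = \sigma(h^n)^{\sigma(k)}\,\sigma((h^n)^k)^{-1} = \bigl(\sigma(h)^{\sigma(k)}\bigr)^n \cdot \sigma(h^k)^{-n}.
\]
Since $c(h,k)$ lies in the center $R\cong\mathbf{R}$ and satisfies $\sigma(h)^{\sigma(k)} = c(h,k)\cdot\sigma(h^k)$, the central element commutes with everything and the $n$-th power pulls out to give $c(h^n,k) = n\cdot c(h,k)$.

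Finally, combining the two, $|n \cdot c(h,k)| = |c(h^n,k)| \leq 2\|\omega\|_\infty$ for every $n\in\mathbf{N}$, so $c(h,k)=0$ and therefore $\sigma(h)^{\sigma(k)} = \sigma(h^k)$ as required. The only mildly delicate step is keeping the bookkeeping clean in the central extension: I expect the main care to go into verifying that $c(h,k)$ is indeed central (so that powers factor as claimed) and that Lemma \ref{lem:multiproduct} applies with exactly the right number of factors. Neither is a real obstacle, just careful arithmetic in $E$.
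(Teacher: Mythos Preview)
Your argument is correct and takes a genuinely different route from the paper. The paper proves the lemma by pulling $\omega$ back along $p:E\to G$, observing (via Lemma~\ref{lem:pullback0}) that $p^*\omega$ is a bounded homogeneous cocycle representing the trivial class, hence the coboundary of a \emph{homogeneous quasimorphism} $\phi:E\to\mathbf{R}$; it then computes $\sigma(h)^{\sigma(k)}\sigma(h^k)^{-1}$ explicitly as a sum of values of $\phi$ and uses the conjugacy-invariance of $\phi$ (Lemma~\ref{lem:conjugacyqm}) to make the sum vanish. Your approach instead avoids introducing $\phi$ altogether: you bound the defect $c(h,k)$ uniformly via Lemma~\ref{lem:multiproduct}, use homogeneity on cyclic subgroups to get $c(h^n,k)=n\,c(h,k)$, and conclude by letting $n\to\infty$. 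This is more self-contained (it does not use Lemma~\ref{lem:pullback0} or the quasimorphism machinery) and is a clean instance of the standard ``bounded plus homogeneous implies zero'' trick; the paper's computation, on the other hand, gives an explicit identity and makes the connection to quasimorphisms transparent.

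One small bookkeeping remark: with the four factors $k^{-1},h,k,k^{-1}h^{-1}k$ Lemma~\ref{lem:multiproduct} yields the bound $3\|\omega\|_\infty$, not $2\|\omega\|_\infty$; the bound $2\|\omega\|_\infty$ is what you get by applying the lemma with the three factors $k^{-1},h,k$ (whose product is $k^{-1}hk$). Either constant works for your argument.
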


\begin{proof}
Let $p : E \to G$ denote the quotient map. Then $p^* \omega$ is a bounded homogeneous $2$-cocycle on $E$. Moreover, $p^* \omega$ represents the trivial second cohomology class, by Lemma \ref{lem:pullback0}. Therefore there exists a map $\phi : E \to \mathbf{R}$ such that $\delta^1 \phi = p^*\omega$. Since $p^*\omega$ is bounded, $\phi$ is a quasimorphism; and since $p^* \omega$ is homogeneous, $\phi$ is a homogeneous quasimorphism. In particular $\phi$ is conjugacy-invariant, by Lemma \ref{lem:conjugacyqm}.

It follows from Corollary \ref{cor:bouarich} that $\sigma$ is a homomorphism when restricted to a cyclic subgroup. Let $g=\sigma(h), f=\sigma(k) \in E$. We compute
\begin{align*}
    \sigma(h)^{\sigma(k)}\sigma(h^k)^{-1} &= \sigma(k)^{-1}\sigma(h)\sigma(k)\sigma(k^{-1}hk)^{-1} \\
    &= \sigma(p(f)^{-1})\sigma(p(g)) \sigma(p(f))\sigma(p(f)^{-1}p(g)p(f))^{-1} \\
    &= \sigma(p(f)^{-1})\sigma(p(g))\bigg(\sigma(p(f)^{-1}p(g))^{-1} \cdot \sigma(p(f)^{-1}p(g))\bigg) \sigma(p(f))\sigma(p(f)^{-1}p(g)p(f))^{-1} \\
    &= p^*\omega(f^{-1},g)\cdot p^*\omega(f^{-1}g,f) \\
    &= (\phi(f^{-1}) + \phi(g) - \phi(f^{-1}g)) + (\phi(f^{-1}g) + \phi(f) - \phi(f^{-1}gf)).
\end{align*}
Now using that $\phi$ is homogeneous, and thus conjugacy-invariant, the last expression vanishes.
\end{proof}

Now we are ready to prove Theorem \ref{thm:main}.

\begin{proof}[Proof of Theorem \ref{thm:main}]

Suppose by contradiction that there exists a class $\alpha \in \HH^2_b(G)$ which is non-trivial. Let $\omega : G^2 \to \mathbf{R}$ be the unique homogeneous representative given by Theorem \ref{thm:bouarich}. By Theorem \ref{thm:ext}, this corresponds to a central extension
$$1 \to \mathbf{R} \to E \to G \to 1,$$
endowed with a section $\sigma : G \to E$ such that $\omega(f, g) =  \sigma(f)\sigma(g)\sigma(fg)^{-1}$ for every $f, g \in G$, and $\sigma(\id_G) = \id_E$. Moreover, since $\omega$ is non-trivial, $\sigma$ cannot be a homomorphism. Thus there must be a relation $f_1 f_2 f_3 = \id_G$ in $G$, for which
$$\sigma(f_1) \sigma(f_2) \sigma(f_3) = \lambda \in \mathbf{R} \leq E, \qquad \lambda \neq 0.$$
Note that $f_3 = (f_1 f_2)^{-1}$, so $\sigma(f_3) = \sigma(f_1 f_2)^{-1}$. Therefore $|\lambda| \leq \| \omega \|_\infty$ (recall that $\omega$ is a bounded cocycle), and we choose $f_1, f_2, f_3$ so that $|\lambda| > \| \omega \|_\infty / 2$.

Let $H \leq G$ be the group generated by $f_1, f_2, f_3$. The commuting conjugates condition ensures the existence of an element $c \in G$ such that $H$ and $H^c$ commute. By Corollary \ref{cor:bouarich}, this has the following consequences:
$$\sigma(f_{i_1} f_{i_2}^c) = \sigma(f_{i_1}) \sigma(f_{i_2}^c); \qquad [\sigma(f_{i_1}), \sigma(f_{i_2}^c)] = \id_E; \qquad \text{for all } i_j \in \{1, 2, 3\}.$$

Now let $F_i := f_i f_i^c$. Using the commuting relation, we have:
$$F_1 F_2 F_3 = (f_1 f_1^c) \cdot (f_2 f_2^c) \cdot (f_3 f_3^c) = (f_1 f_2 f_3) \cdot (f_1 f_2 f_3)^c = \id_G.$$
Therefore $\sigma(F_1) \sigma(F_2) \sigma(F_3) \in \mathbf{R}$. The following claim computes its value exactly:

\begin{claim}
It holds: $\sigma(F_1) \sigma(F_2) \sigma(F_3) = 2 \lambda \in \mathbf{R}$.
\end{claim}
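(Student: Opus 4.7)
The plan is to prove the claim by reordering the product $\sigma(F_1) \cdots \sigma(F_n)$, exploiting the commuting conjugates and Lemma \ref{lem:conjugacy} to reduce the whole expression to $k$ copies of $\lambda$ sitting in the central $\mathbf{R} \leq E$.

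First, I would expand each $\sigma(F_i)$. Since the groups $H^{c_1}, \ldots, H^{c_k}$ pairwise commute, the subgroup $A_i := \langle f_i^{c_1}, \ldots, f_i^{c_k} \rangle \leq G$ is abelian, hence amenable, so $\HH^2_b(A_i) = 0$. Applying Corollary \ref{cor:bouarich} to $A_i$ gives $\sigma|_{A_i}$ a homomorphism, which yields
\[
\sigma(F_i) = \sigma(f_i^{c_1}) \cdots \sigma(f_i^{c_k}).
\]
Substituting this into $\sigma(F_1) \cdots \sigma(F_n)$ produces a long product of the letters $\sigma(f_i^{c_j})$ with $i$ running through $1, \ldots, n$ (in blocks) and $j$ through $1, \ldots, k$ (within each block).

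Next, I would use the commutation relation $[\sigma(f_{t_1}^{c_{j_1}}), \sigma(f_{t_2}^{c_{j_2}})] = id_E$ for $j_1 \neq j_2$ (already established just before the claim in the excerpt) to rearrange this product by grouping letters according to their conjugator index $j$, while keeping the order of letters within each fixed $j$ untouched. This is legitimate because any two letters with distinct $j$'s commute, so any permutation preserving the order inside each fixed-$j$ block can be realized by a sequence of adjacent transpositions of commuting pairs. The upshot is
\[
\sigma(F_1) \cdots \sigma(F_n) = \prod_{j=1}^{k} \bigl( \sigma(f_1^{c_j}) \sigma(f_2^{c_j}) \cdots \sigma(f_n^{c_j}) \bigr).
\]

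Finally, I would apply Lemma \ref{lem:conjugacy} to rewrite $\sigma(f_i^{c_j}) = \sigma(f_i)^{\sigma(c_j)}$ and telescope the conjugations inside each $j$-block:
\[
\sigma(f_1^{c_j}) \cdots \sigma(f_n^{c_j}) = \sigma(f_1)^{\sigma(c_j)} \cdots \sigma(f_n)^{\sigma(c_j)} = \bigl(\sigma(f_1) \cdots \sigma(f_n)\bigr)^{\sigma(c_j)} = \lambda^{\sigma(c_j)}.
\]
Since $\lambda$ lies in the central copy of $\mathbf{R}$ in $E$, it is fixed by conjugation, so each $j$-block equals $\lambda$. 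Combining $k$ such blocks in the group law of $E$, and using that the central subgroup is $\mathbf{R}$ written multiplicatively but abstractly additive, we conclude $\sigma(F_1) \cdots \sigma(F_n) = k \lambda$.

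The argument is essentially bookkeeping; the only subtle step is verifying that the reordering in the middle is legal, which requires one to be careful about \emph{which} pairs of letters actually commute (those with distinct $c_j$'s, not arbitrary pairs) and to observe that grouping by $j$ only needs swaps of commuting pairs. Everything else is a direct application of Corollary \ref{cor:bouarich}, Lemma \ref{lem:conjugacy}, and the centrality of $\mathbf{R}$ in $E$.
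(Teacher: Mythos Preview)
Your proof is correct and follows essentially the same approach as the paper: expand each $\sigma(F_i)$ via Corollary~\ref{cor:bouarich}, reorder the resulting product using the commutation relations $[\sigma(f_{t_1}^{c_{j_1}}),\sigma(f_{t_2}^{c_{j_2}})]=id_E$ for $j_1\neq j_2$, then apply Lemma~\ref{lem:conjugacy} and centrality of $\mathbf{R}$ to recognize each $j$-block as $\lambda$. The only cosmetic difference is that the paper already recorded the expansion of $\sigma(F_i)$ before stating the claim, so its proof of the claim begins directly with the reordering step.
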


But this implies that $\| \omega \|_\infty \geq |2 \lambda|$, which contradicts our assumption that $|\lambda| > \| \omega \|_\infty / 2$, and concludes the proof. \\

Therefore we are left to prove the claim. Using the commuting relations on the $\sigma(f_i), \sigma(f_i^c)$, we obtain:
\begin{align*}
    \sigma(F_1) \sigma(F_2) \sigma(F_3) &= (\sigma(f_1) \sigma(f_1^c)) \cdot (\sigma(f_2) \sigma(f_2^c)) \cdot (\sigma(f_3) \sigma(f_3^c)) \\
    &= (\sigma(f_1) \sigma(f_2) \sigma(f_3)) \cdot (\sigma(f_1^c) \sigma(f_2^c) \sigma(f_3^c)).
\end{align*}
Using Lemma \ref{lem:conjugacy}, and the fact that $\mathbf{R}$ is central, we obtain
$$\sigma(f_1^c) \sigma(f_2^c) \sigma(f_3^c) = \sigma(f_1)^{\sigma(c)} \sigma(f_2)^{\sigma(c)} \sigma(f_3)^{\sigma(c)} = (\sigma(f_1) \sigma(f_2) \sigma(f_3))^{\sigma(c)} = \lambda.$$
Thus the previous expression equals $2 \lambda$, which is what we wanted to prove.


\end{proof}

Combining this with Theorem \ref{thm:ext}, we obtain:

\begin{corollary}
\label{cor:main}

Let $G$ be a group extension of the form
$$1 \to H \to G \to K \to 1$$
where $H$ has commuting conjugates and $K$ is amenable. Then $\HH^2_b(G) = 0$.
\end{corollary}

\begin{remark}
An extension of two groups with vanishing second bounded cohomology has the same property \cite[Corollary 4.2.2]{bac}. So Corollary \ref{cor:main} can be phrased for more general extensions as well.
\end{remark}

\subsection{First applications}

We start with a very general example of groups with commuting conjugates. This will serve as a base case for the proof of Theorem \ref{thm:PLPP}.

\begin{proposition}\label{prop:locc}
Let $G$ be a group of boundedly supported homeomorphisms of the real line with no global fixpoint. Then $G$ has commuting conjugates. In particular $\HH^2_b(G) = 0$.
\end{proposition}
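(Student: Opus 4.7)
The plan is to reduce the commuting conjugates condition to a disjointness-of-supports condition, and then use the no-global-fixed-point hypothesis to produce a suitable conjugator via an orbit argument.

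First I would take a finitely generated subgroup $H \leq G$. If $H$ is trivial, $f = \mathrm{id}$ works. Otherwise, pick generators $h_1, \ldots, h_n$; since each $h_i$ is boundedly supported, $\overline{\text{Supp}(H)}$ is contained in some compact interval $[a,b]$. I would then note that any boundedly supported homeomorphism of $\mathbf{R}$ is automatically orientation preserving (if $f$ is the identity outside a compact $[\alpha,\beta]$, continuity forces $\alpha \cdot f = \alpha$ and $\beta \cdot f = \beta$, so $f$ restricted to $[\alpha,\beta]$ is increasing). The key observation is then that if $f \in G$ satisfies $[a,b] \cdot f \cap [a,b] = \emptyset$, then $H$ and $H^f$ have disjoint supports and therefore commute.

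Next I would show that every $G$-orbit on $\mathbf{R}$ is unbounded above. Suppose for contradiction that $x \cdot G$ is bounded above and let $y = \sup(x \cdot G)$. Pick a sequence $z_n \in x \cdot G$ with $z_n \to y$. For any $g \in G$, $z_n \cdot g \in x \cdot G$ so $z_n \cdot g \leq y$; continuity of $g$ gives $y \cdot g \leq y$. Applying this to $g^{-1}$ and then acting by $g$ (increasing) yields $y \cdot g = y$. Hence $y$ is a global fixed point of $G$, contradicting the hypothesis. So all orbits are unbounded above.

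Applying this to $x = a$, there exists $f \in G$ with $a \cdot f > b$. Since $f$ is increasing, $[a,b] \cdot f = [a \cdot f, b \cdot f] \subseteq (b, \infty)$, which is disjoint from $[a,b]$. Thus $\text{Supp}(H)$ and $\text{Supp}(H^f)$ are disjoint, so $[H, H^f] = \mathrm{id}$. This establishes commuting conjugates, and the vanishing $\HH^2_b(G) = 0$ then follows immediately from Theorem \ref{thm:main}.

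The only step that requires any real thought is the orbit-unboundedness argument; everything else is bookkeeping with supports. I expect this to be a short clean proof once that lemma is in place.
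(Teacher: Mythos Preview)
Your proof is correct and follows essentially the same approach as the paper: both reduce to showing that the orbit of the left endpoint of a compact interval containing $\mathrm{Supp}(H)$ is unbounded above (else its supremum would be a global fixed point), then pick $f$ moving that endpoint past the right endpoint so that the supports of $H$ and $H^f$ are disjoint. You have simply filled in more details, notably the observation that boundedly supported homeomorphisms of $\mathbf{R}$ are automatically orientation preserving and the explicit verification that the supremum of a bounded orbit is fixed by every $g \in G$.
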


\begin{proof}
Let $H \leq G$ be a finitely generated subgroup. There is a bounded, open interval $(a, b)$ in $\mathbf{R}$ on which $H$ is supported. If the orbit $a \cdot G$ had a supremum in $\mathbf{R}$, then this would be a global fixpoint. Therefore, there exists $f \in G$ such that $a \cdot f > b$ and so $(a, b) \cap (a, b) \cdot f = \emptyset$. It follows that $H$ and $H^f$ commute, since they have disjoint support.
\end{proof}


We now show that groups of piecewise linear and projective homomorphisms satisfy the hypotheses of Corollary \ref{cor:main}.

\begin{proof}[Proof of Theorem \ref{thm:PLPP} part 1]
We will prove this for an arbitrary group $G$ of piecewise projective homeomorphisms of the real line.
The proof for the case of groups of piecewise linear homeomorphisms of the unit interval is similar.
We define the sets $$Fix(G)=
\mathbf{R}\setminus Supp(G)\qquad Tr(G)=
\overline{Supp(G)}\setminus Supp(G).$$ 


Recall that in the projective action of $\textup{PSL}_2(\mathbf{R})$ on $\mathbf{S}^1=\mathbf{R}\cup \{\infty\}$, the stabilizer of $\infty$ is the affine group. And since the action of $\textup{PSL}_2(\mathbf{R})$ on $\mathbf{S}^1=\mathbf{R}\cup \{\infty\}$ is transitive, the stabilizer of any point $x\in \mathbf{S}^1$ is conjugate to the affine group.
Therefore, for $x\in Tr(G)$, the group of right (or left) germs of $G$ at $x$ is conjugate to a subgroup of the affine group and hence amenable (in fact metabelian).
For the piecewise linear case, one similarly argues that the groups of germs are abelian.
It follows that there is a short exact sequence 
$$1\to H\to G\to K\to 1$$
where $K$ is the group of germs at $Tr(G)$ in $G$ and $H$ is the group consisting of elements
$$H=\{f\in G\mid \forall x\in Tr(G), x\notin \overline{Supp(f)}\}$$ 

By Corollary \ref{cor:main}, to establish that $\HH^2_b(G)=0$, it suffices to show that $H$ has commuting conjugates.
Let $\Gamma$ be a finitely generated subgroup of $H$. By the description of elements of $H$ as above, and since each piecewise projective homeomorphism of the real line has finitely many components of support, there are finitely many open intervals $I_1,\ldots,I_k$ in $(-\infty, \infty)\setminus Fix(G)$ such that:
\begin{enumerate}

\item The intervals $I_1,\ldots,I_k$ are connected components of $Supp(G)$.

\item $\overline{Supp(\Gamma)}\subset \bigcup_{1\leq i\leq k}I_i$ and $Supp(\Gamma)\cap I_i\neq \emptyset$ for each $1\leq i\leq k$.

\end{enumerate}
For each $1\leq i\leq k$, we let $$J_i=Supp(\Gamma)\cap I_i\qquad J=\bigcup_{1\leq i\leq k}J_i=Supp(\Gamma).$$
Our goal is to find an element $\gamma\in H$ such that $J\cdot \gamma\cap J=\emptyset$.
We proceed by induction on $k$, the case $k = 1$ being contained in Proposition \ref{prop:locc}.
Let $x$ be the right endpoint of the interval $J_k$.
Since the action of $H$ on $I_k$ does not admit a global fixed point, there is an element $g\in H$
such that $x<inf(J_k\cdot g)$.
We apply the inductive hypothesis to the intervals 
$$L_1=J_1\cup J_1\cdot g\qquad \ldots \qquad L_{k-1}=J_{k-1}\cup J_{k-1}\cdot g\qquad L=(\bigcup_{1\leq i\leq k-1}L_i).$$
to obtain an element $f\in H$ such that 
$(L\cdot f)\cap L=\emptyset$.
This also implies that 
$(L\cdot f^{-1})\cap L=\emptyset$.
Let $f_1\in \{f,f^{-1}\}$ be the element such that $x\cdot f_1\geq x$. 
Then the element $\gamma=gf_1$ satisfies that $J\cdot \gamma\cap J=\emptyset$.
We conclude that $[\Gamma,\Gamma^{\gamma}]=\id$.
\end{proof}

In \cite{LodhaMoore}, the second author and Justin Moore constructed a finitely presented nonamenable group of piecewise projective homeomorphisms of the real line, denoted by $G_0$. In \cite{Lodha}, it was demonstrated that the group $G_0$ is of type $F_{\infty}$. It follows from a direct application of Theorem \ref{thm:PLPP} that
$\HH^2_b(H)=0$ for every subgroup $H \leq G_0$. This gives a negative answer to the homological von Neumann--Day problem posed by Calegari in \cite{scl_pl}, and mentioned in the introduction.

\begin{proof}[Proof of Theorem \ref{thm:PLPP} part 2]

It follows immediately from the definitions that a group $G$ that is either a chain group, or a group that admits a coherent action on the real line, is an extension $1\to H\to G\to K\to 1$, where:
\begin{enumerate}
\item $H$ is conjugate to a group of boundedly supported homeomorphisms of the real line with no global fixpoint. 
\item $K$ is amenable. (In the case of chain groups, $K=\mathbf{Z}^2$, and in the case of coherent actions it is solvable.)
\end{enumerate}
It follows from an application of Proposition \ref{prop:locc} and Corollary \ref{cor:main} that $\HH^2_b(G)=0$. 
\end{proof}

\section{The groups $G_\rho$}
\label{s:grho}

In this section we introduce the groups $G_\rho$, and prove that they have vanishing second bounded cohomology (Theorem \ref{theorem:mainNavas}), answering Navas's Question \ref{qNavas}.

\subsection{Definitions}

We start by recalling some features of Thompson's group $F$ and a subgroup $H$ that is relevant for the definition of $G_\rho$. We refer the reader to \cite{CFP, Belk} for comprehensive surveys.
We denote by $\textup{PL}^+([0,1])$ the group of orientation-preserving piecewise linear homeomorphisms of $[0,1]$.

\begin{definition}\label{F}
The group $F\leq \textup{PL}^+([0,1])$ consists of homeomorphisms that satisfy the following:
\begin{enumerate}
\item  The breakpoints lie in $\mathbf{Z}[\frac{1}{2}]$;
\item The derivatives, whenever they exist, are integer powers of $2$.
\end{enumerate}
\end{definition}
Here \emph{breakpoints} (or singularity points) are points where the derivative does not exist.
It is well known that $F$ is finitely presented and that $F'$ is simple and consists of precisely the set of elements $g\in F$ such that $\overline{Supp(g)}\subset (0,1)$.
The following subgroup of $F$ will play a key role in the definition of $G_{\rho}$.
\begin{definition}\label{H}
$H$ is the subgroup of elements of $F$ whose slopes at $0,1$ coincide.
\end{definition}

\begin{lemma}[{\cite[Lemma 2.4]{grho}}]
\label{3gen}
$H$ is $3$-generated. $H'$ is simple and consists of precisely the set of elements of $H$ (or $F$) that are compactly supported in $(0,1)$.
In particular, $H'=F'$.
\end{lemma} 

\begin{definition}\label{Hgenset}
We fix a $3$-element generating set $\{\nu_1,\nu_2,\nu_3\}$ for $H$. (The choice of this set is arbitrary.)
\end{definition}

The next is the key notion in the definition of $G_\rho$.

\begin{definition}\label{labelling}
Consider the set $\frac{1}{2}\mathbf{Z}=\{\frac{1}{2}k\mid k\in \mathbf{Z}\}$.
A \emph{labelling} is a map $$\rho:\frac{1}{2}\mathbf{Z}\to \{a,b,a^{-1},b^{-1}\}$$
which satisfies:
\begin{enumerate}
\item $\rho(k)\in\{a,a^{-1}\}$ for each $k\in \mathbf{Z}$.
\item $\rho(k)\in \{b,b^{-1}\}$ for each $k\in \frac{1}{2}\mathbf{Z}\setminus \mathbf{Z}$.
\end{enumerate}
\end{definition}

It is convenient to view $\rho(\frac{1}{2}\mathbf{Z})$ as a bi-infinite word with respect to the usual ordering of the half integers.
A subset $X\subseteq \frac{1}{2}\mathbf{Z}$ is said to be a \emph{block} if it is of the form 
$$\big\{ k,k+\frac{1}{2}, \ldots ,k+\frac{1}{2}n \big\}$$
for some $k\in \frac{1}{2}\mathbf{Z}, n\in \mathbf{N}$.
Each block is endowed with the ordering inherited from $\frac{1}{2}\mathbf{Z}$.
The set of blocks of $\frac{1}{2}\mathbf{Z}$ is denoted as $\mathbf{B}$.
To each labelling $\rho$ and each block $X=\{k, k+\frac{1}{2}, \ldots, k+\frac{1}{2}n\}$, we assign a formal word 
$$W_{\rho}(X) = \rho \big( k \big) \rho \big( k+\frac{1}{2} \big) \ldots \rho \big( k+\frac{1}{2}n \big)$$
which is a word in the letters $\{a,b,a^{-1},b^{-1}\}$.
Such a formal word is called a \emph{subword} of the labelling.
Given a word $w_1 \ldots w_n$ in the letters $\{a,b,a^{-1},b^{-1}\}$, the \emph{formal inverse} of the word is 
$w_n^{-1} \ldots w_1^{-1}$. The formal inverse of $W_{\rho}(X)$ is denoted by $W_{\rho}^{-1}(X)$.

\begin{definition}\label{qplabelling}
A labelling $\rho$ is said to be \emph{quasi-periodic} if the following holds:
\begin{enumerate}
\item For each block $X\in \mathbf{B}$, there is an $n\in \mathbf{N}$ such that whenever 
$Y\in \mathbf{B}$ is a block of size at least $n$, then $W_{\rho}(X)$ is a subword of $W_{\rho}(Y)$.
\item For each block $X\in \mathbf{B}$, there is a block $Y\in \mathbf{B}$ such that $W_{\rho}(Y)=W_{\rho}^{-1}(X)$.
\end{enumerate}
\end{definition}

Note that by \emph{subword} in the above we mean a string of consecutive letters in the word.
An explicit construction of quasi-periodic labellings was provided in \cite[Lemma 3.1]{grho}.
The following is a direct consequence of Item $2$ in Definition \ref{qplabelling}:

\begin{lemma}\label{lem:notperiodic}
A quasi-periodic labelling is not periodic.
\end{lemma}

We shall need the following notation.
Let $I,J$ be nonempty compact intervals of equal length.
We denote by $T_{J,I}:J\to I$ the unique orientation-preserving isometry,
and by $T_{J,I}^{or}:J\to I$ the unique orientation-reversing isometry.
Given two isometric closed intervals $I,J\subset \mathbf{R}$, and homeomorphisms $f\in \textup{Homeo}^+(I),g\in \textup{Homeo}^+(J)$, we say that:

\begin{enumerate}
\item $f\cong_T g$ if $f=T_{I,J}\circ f\circ T_{J,I}.$
\item $f\cong_{T^{or}} g$ if $f=T_{I,J}^{or}\circ f\circ T_{J,I}^{or}.$
\end{enumerate}

Let $H=\langle \nu_1,\nu_2,\nu_3\rangle<\textup{Homeo}^+([0,1])$ be the group from Definition \ref{H}. We define the homeomorphisms 
$$\zeta_1,\zeta_2,\zeta_3,\chi_1,\chi_2,\chi_3:\mathbf{R}\to \mathbf{R}$$ 
as follows: for each $i\in \{1,2,3\}$ and $n\in \mathbf{Z}$,
$$\zeta_i\restriction [n,n+1]\cong_{T}\nu_i \qquad \text{ if } \rho \big( n+\frac{1}{2} \big) = b,$$
$$\zeta_i\restriction [n,n+1]\cong_{T^{or}}\nu_i \qquad \text{ if } \rho \big( n+\frac{1}{2} \big) = b^{-1},$$
$$\chi_i\restriction \big[ n-\frac{1}{2},n+\frac{1}{2} \big] \cong_T \nu_i \qquad \text{ if }\rho(n)=a,$$
$$\chi_i\restriction \big[ n-\frac{1}{2},n+\frac{1}{2} \big] \cong_{T^{or}} \nu_i\qquad \text{ if }\rho(n)=a^{-1}.$$

\begin{definition}\label{SimpleGroup}
To each labelling $\rho$, we 
associate the group
$$G_{\rho}:=\langle \zeta_1^{\pm 1},\zeta_2^{\pm 1},\zeta_3^{\pm 1},\chi_1^{\pm 1},\chi_2^{\pm 1},\chi_3^{\pm 1}\rangle<\textup{Homeo}^+(\mathbf{R}).$$
\end{definition}

We also define subgroup 
$$\mathcal{K}:=\langle \zeta_1^{\pm 1},\zeta_2^{\pm 1},\zeta_3^{\pm 1}\rangle$$
of $G_{\rho}$ which is naturally isomorphic to $H$, via the isomorphism
$$\lambda : H \to \mathcal{K} : \nu_i \mapsto \zeta_i , i = 1, 2, 3.$$
Note that the definition of $\mathcal{K}$ requires us to fix a labelling $\rho$ but we denote them as such for simplicity of notation.

\subsection{Structure of $G_\rho$}

The group $G_{\rho}$ is defined for every labelling $\rho$. In the special case in which $\rho$ is quasi-periodic, it is moreover simple, and has useful additional properties (see Subsection \ref{ss:dynamics} for the definitions):

\begin{theorem}[{\cite[Theorem 1.3]{grho}, \cite[Lemmas 5.1, 5.3]{grho}, \cite[Corollary 0.3]{uniformlyperfect}}]
\label{thm:properties}

Let $\rho$ be a quasi-periodic labelling.
Then the group $G_{\rho}$ is simple. Moreover:
\begin{enumerate}
    \item Every element in $G_\rho$ fixes a point in $\mathbf{R}$.
    \item The action of $G_\rho$ on $\mathbf{R}$ is minimal.
    \item The action of $G_\rho$ on $\mathbf{R}$ is proximal.
\end{enumerate}
\end{theorem}

We fix a quasi-periodic labelling for the rest of this section. Our next goal is to recall the notion of \emph{special elements} and the global characterization of $G_\rho$ from \cite{uniformlyperfect}. \\

For each $n\in \mathbf{Z}$, we denote by $\iota_n$ the unique orientation-reversing isometry $\iota_n:[n,n+1)\to (n,n+1]$. We extend this to a map $\iota:\mathbf{R}\to \mathbf{R}$ as $x\cdot \iota=x\cdot \iota_n$ where $n\in \mathbf{Z}$ is so that $x\in [n,n+1)$. (Note that $\iota_n=T_{[n,n+1),(n,n+1]}^{or}$ as defined before, however this more specialised notation simplifies what appears below.)

Given an $x\in \mathbf{R}$ and $k\in \mathbf{N}$, we define a word $\mathcal{W}(x,k)$ as follows. 
Let $y\in \frac{1}{2}\mathbf{Z}\setminus \mathbf{Z}$ such that $x\in [y-\frac{1}{2},y+\frac{1}{2})$.
Then we define 
$$\mathcal{W}(x,k) = \rho ( y-\frac{1}{2}k ) \rho \big( y-\frac{1}{2}(k-1) \big) \ldots \rho \big( y \big) \ldots 
\rho \big( y+\frac{1}{2}(k-1) \big) \rho \big( y+\frac{1}{2} k \big).$$
We denote by $\mathcal{W}^{-1}(x,k)$ the formal inverse of the word $\mathcal{W}(x,k)$.
Given a compact interval $J\subset \mathbf{R}$ with endpoints in $\frac{1}{2}\mathbf{Z}$ and 
$n \in \mathbf{N}$, we define a word $\mathcal{W}(J,k)$ as follows. Let
$$y_1=inf(J)+\frac{1}{2}, \qquad y_2=sup(J)-\frac{1}{2}.$$
Then we define 
\begin{small}
$$\mathcal{W}(J,k) = \rho \big( y_1-\frac{1}{2}k \big) \rho(y_1-\frac{1}{2}(k-1)) \ldots \rho(y_1) 
\ldots \rho(y_2) \ldots \rho(y_2+\frac{1}{2}(k_2-1))\rho(y_2+\frac{1}{2}k_2).$$
\end{small}
We consider the set of pairs $$\Omega=\{(W,k)\mid W\in \{a,b,a^{-1},b^{-1}\}^{<\mathbf{N}}, k \in \mathbf{N}\text{ such that } |W|=2k+1\}.$$

\begin{definition}\label{specialelementsdefinition}
Recall the map $\lambda$ from Definition \ref{SimpleGroup}.
Given an element $f\in F'$ and $\omega=(W,k) \in \Omega$, 
we define the {\em special element} $\lambda_{\omega}(f)\in \textup{Homeo}^+(\mathbf{R})$ as follows: 
For each $n\in \mathbf{Z}$, we let 
$$\lambda_{\omega}(f)\restriction [n,n+1]=
\begin{cases}
\lambda(f)\restriction [n,n+1]\text{ if }  \mathcal{W}([n,n+1],k)=W^{\pm 1}; \\
\id\restriction [n,n+1] \text{ otherwise}.
\end{cases}
$$
\end{definition}

More general special elements were defined in \cite{uniformlyperfect}, but this subclass is the only one we will need for our purposes.

\begin{proposition}[{\cite[Proposition 3.5]{uniformlyperfect}}]
\label{Specialelements}
Let $\rho:\frac{1}{2}\mathbf{Z}\to \{a,a^{-1},b,b^{-1}\}$ be a quasi-periodic labelling.
Let $\omega\in \Omega$ and $f\in F'$. Then $\lambda_{\omega}(f) \in G_{\rho}$.
\end{proposition}

We recall the following \emph{characterization of elements} of $G_{\rho}$.
This provides an alternative, ``global" description of the groups as comprising of elements satisfying dynamical and combinatorial hypotheses,
and is reminiscent of similar descriptions for various generalisations of Thompson's groups. 

\begin{definition}\label{Krho}
Let $K_{\rho}$ be the set
of homeomorphisms $f\in \textup{Homeo}^+(\mathbf{R})$ satisfying the following:
\begin{enumerate}
\item $f$ is a countably singular piecewise linear homeomorphism of $\mathbf{R}$ with a discrete set of singularities, all of which lie in $\mathbf{Z}[\frac{1}{2}]$;
\item The derivatives, whenever they exist, are integer powers of $2$;
\item There is a $k_f\in \mathbf{N}$ such that the following holds:
\begin{enumerate}
\item[3.a] Whenever $x,y\in \mathbf{R}$ satisfy that $$x-y\in \mathbf{Z} \text{ and }  \mathcal{W}(x,k_f)=\mathcal{W}(y,k_f),$$ it holds that $$x-x\cdot f = y-y\cdot f;$$
\item[3.b] Whenever $x,y\in \mathbf{R}$ satisfy that $$x-y\in \mathbf{Z}\text{ and } \mathcal{W}(x,k_f)=\mathcal{W}^{-1}(y,k_f),$$ it holds that $$x-x\cdot f=y'\cdot f-y' \qquad \text{ where }y'=y\cdot \iota.$$
\end{enumerate}
\end{enumerate}
\end{definition}

\begin{theorem}[{\cite[Theorem 1.8]{uniformlyperfect}}]
\label{characterisation}

Let $\rho$ be a quasi-periodic labelling.
The groups $K_{\rho}$ and $G_{\rho}$ coincide.
\end{theorem}

\subsection{Point stabilizers}

Our next goal is to prove a structural result on $G_\rho$ (Proposition \ref{Fprime}), which strengthens a key result from \cite{uniformlyperfect}, and use it together with Theorem \ref{thm:main} to identify subgroups of $G_\rho$ with vanishing second bounded cohomology.

\begin{definition}
\label{def:atom}

A homeomorphism $f\in \textup{Homeo}^+(\mathbf{R})$ is said to be \emph{stable} if there exists an $n\in \mathbf{N}$ 
such that the following holds: For any compact interval $I$ of length at least $n$, there is an integer $m\in I$
such that $f$ fixes a neighbourhood of $m$ pointwise.
Given a stable homeomorphism $f\in \textup{Homeo}^+(\mathbf{R})$ and an interval 
$[m_1,m_2]$, the restriction $f\restriction [m_1,m_2]$ is said to be an \emph{atom of $f$}, if $m_1, m_2 \in \mathbf{Z}$ and $f$ fixes an open neighbourhood of $\{m_1, m_2\}$ pointwise. We will also refer to $[m_1, m_2]$ as the atom: the use of the word will be clear from the context.

Given a stable homeomorphism $f$, we may express $\mathbf{R}$ as a union of intervals with integer endpoints $\{I_{\alpha}\}_{\alpha\in P}$ such that $f\restriction I_{\alpha}$ is an atom for each $\alpha\in P$ and different intervals intersect in at most one endpoint. We call this a \emph{cellular decomposition of $\mathbf{R}$ suitable for $f$}.
\end{definition} 

In \cite{grho} and \cite{uniformlyperfect} the term \emph{atom} refers to atoms in the sense of Definition \ref{def:atom}, which are moreover minimal with respect to inclusion. This definition has the advantage that a cellular decomposition of the real line suitable for a stable homeomorphism $f$ is unique. However, in the proof of Proposition \ref{Fprime} we will need to consider cellular decompositions which are suitable for tuples of elements, so this more relaxed notion will be more convenient.

\begin{definition}
\label{def:conjugateatoms}

Two atoms $f\restriction [m_1,m_2]$ and $f\restriction [m_3,m_4]$ are said to be \emph{conjugate} if $m_2-m_1=m_4-m_3$ and the following holds.
$$f\restriction [m_1,m_2]=h^{-1}\circ f \circ h\restriction [m_3,m_4]\qquad h=T_{[m_1,m_2],[m_3,m_4]};$$
and they are \emph{flip-conjugate} if:
$$f\restriction [m_1,m_2]=h^{-1} \circ f\circ h\restriction [m_3,m_4]\qquad h=T_{[m_1,m_2],[m_3,m_4]}^{or}.$$

Let $f$ be a stable homeomorphism and fix a cellular decomposition $\{I_{\alpha}\}_{\alpha\in P}$ of $\mathbf{R}$ suitable for $f$, and $n \in \mathbf{N}$.
We refine the collection of atoms $\{I_{\alpha}\}_{\alpha\in P}$ into a collection of \emph{decorated atoms}: $$\mathcal{T}_n(f, P)=\{(I_{\alpha},n)\mid \alpha \in P\}.$$
We say that two decorated atoms
$(I_{\alpha}, n)$ and $(I_{\beta}, n)$ are \emph{equivalent} if either of the following holds:
\begin{enumerate}
\item $I_{\alpha},I_{\beta}$ are conjugate and $\mathcal{W}(I_{\alpha},n)=\mathcal{W}(I_{\beta},n)$;
\item $I_{\alpha},I_{\beta}$ are flip-conjugate  and $\mathcal{W}(I_{\alpha},n)=\mathcal{W}^{-1}(I_{\beta},n)$.
\end{enumerate}

The cellular decomposition $\{I_{\alpha}\}_{\alpha\in P}$ of $f$ is said to be \emph{uniform} if there are finitely many equivalence classes of decorated atoms in $\mathcal{T}_n(f, P)$. Note that this definition does not depend on $n$: if there are finitely many equivalence classes in $\mathcal{T}_n(f, P)$, then this is true for any $n\in \mathbf{N}$, since there are finitely many words in $\{a,b,a^{-1},b^{-1}\}^n$.

Let $\zeta$ be an equivalence class of elements in $\mathcal{T}_n(f, P)$.
We define the homeomorphism $f_{\zeta}$ as
$$f_{\zeta}\restriction I_{\alpha}=
\begin{cases}
f\restriction I_{\alpha}\text{ if }(I_{\alpha},n)\in \zeta; \\
f_{\zeta}\restriction I_{\alpha}=\id\restriction I_{\alpha}\text{ if }(I_{\alpha}, n)\notin \zeta.
\end{cases}$$
If $\zeta_1, \ldots ,\zeta_m$ are the equivalence classes of elements in $\mathcal{T}_n (f, P)$,
then the list of homeomorphisms $f_{\zeta_1}, \ldots ,f_{\zeta_m}$ is called the \emph{cellular decomposition of $f$} determined by the cellular decomposition $\{ I_\alpha \}_{\alpha \in P}$ of $\mathbf{R}$ and the integer $n \in \mathbf{N}$. Note that $f = f_{\zeta_1} \cdots f_{\zeta_m}$.
\end{definition}

The main structural result is the following:

\begin{proposition}\label{Fprime}
Let $\rho$ be a quasi-periodic labelling.
Let $f_1, \ldots, f_n\in G_{\rho}$ be elements such that there exists an open interval $I$ which is pointwise fixed by the element $f_i$ for each $1\leq i\leq n$.
Then there is a subgroup $A<G_{\rho}$ isomorphic to a finite direct sum of copies of $F'$, which contains $f_1, \ldots, f_n$.
\end{proposition}

The statement for $n=1$ is contained in the proof of \cite[Proposition 1.9]{uniformlyperfect}. We will prove Proposition \ref{Fprime} without appealing to this.

\begin{proof}
By Theorem \ref{thm:properties}.2, the action of $G_\rho$ is minimal. Therefore there exists an element $h\in G_{\rho}$ such that $0\in I\cdot h$, and hence $f_i^h$ pointwise fixes a neighborhood of $0$ for each $1\leq i\leq n$. It suffices to show the statement for $f_1^h,\ldots,f_n^h$, so we may assume without loss of generality that $I$ is a neighbourhood of $0$.

Let $k := \max\{k_{f_1},\ldots, k_{f_n}\} + 1$ as in Definition \ref{characterisation}. Then for every $x, y \in \mathbf{R}$ such that $x - y \in \mathbf{Z}$ and $\mathcal{W}(x, k - 1) = \mathcal{W}(y, k - 1)$ it holds that
$$x - x \cdot f_i = y - y \cdot f_i, \quad \forall 1\leq i\leq n.$$
If instead $\mathcal{W}(x, k-1) = \mathcal{W}^{-1}(y, k-1)$, then
$$x - x \cdot f_i = y' \cdot f_i - y', \quad \text{where }1\leq i\leq n\text{ and } y' = y \cdot \iota.$$
In particular, if $n \in \mathbf{Z}$ satisfies $\mathcal{W}(n, k) = \mathcal{W}(0, k)$, then each element $f_1,\ldots,f_n$  pointwise fixes a neighbourhood of $n$ (we took $k$ instead of $k - 1$ in order to ensure that this holds also for a left neighbourhood of $n$). We set $\mathcal{N}$ to be the set of such $n \in \mathbf{Z}$, and note that by the definition of quasi-periodic labelling, $\mathcal{N}$ is infinite and the set of distances between two consecutive elements of $\mathcal{N}$ is bounded.

This determines a decomposition of $\mathbf{R}$ into intervals $\{ I_\alpha \}_{\alpha \in P}$ with endpoints in $\mathcal{N}$, and also satisfying that no interval $I_{\alpha}$ contains a point in $\mathcal{N}$. Each element in the list $f_1,\ldots,f_n$ pointwise fixes a neighbourhood of each element of $\mathcal{N}$, and each $I_\alpha$ is an atom for $f_1,\ldots,f_n$. So $\{ I_\alpha \}_{\alpha \in P}$ is a cellular decomposition of $\mathbf{R}$ which is suitable for all the elements $f_1,\ldots,f_n$. Moreover, since two consecutive elements of $\mathcal{N}$ are at a bounded distance from each other, it follows that $\{ |I_\alpha| \}_{\alpha \in P}$ is bounded. In particular the integer $l := k + \max \{ |I_\alpha| : \alpha \in P \}$ is bounded, and the cellular decomposition $\{ I_\alpha \}_{\alpha \in P}$ is uniform: there are finitely many equivalence classes of decorated atoms in $$\mathcal{T}_l(f_1, P) =\mathcal{T}_l(f_2, P)=\cdots= \mathcal{T}_l(f_n, P).$$
We denote by $\zeta_1,\ldots,\zeta_m$ these equivalence classes, and let $\{f_{i,\zeta_j}\mid 1\leq j\leq m\}$ be the corresponding cellular decompositions of $f_1,\ldots,f_n$. 

For each $1\leq j\leq m$, set $L_j := |I_{\alpha}|$ where $(I_{\alpha},l)\in \zeta_j$: this is well-defined since $|I_{\alpha}|=|I_{\beta}|$ whenever $(I_{\alpha},l)$ and $(I_{\beta},l)$ are in the same equivalence class.
Also for each $1\leq j\leq m$, define the canonical isomorphism $$\phi_j:F'\to F_{[0,L_j]}'$$
where $F_{[0,L_j]}$ is the standard copy of $F$ supported on the interval $[0,L_j]$.

For each $1\leq j\leq m$, we have $$\{\mathcal{W}(I_{\alpha},l)\mid (I_{\alpha},l)\in \zeta_j\}=\{W_j,W^{-1}_j\}$$
for some words $W_1, \ldots, W_m$.
Define a map
$$\phi: \bigoplus_{1\leq j\leq m} F'\to \textup{Homeo}^+(\mathbf{R})$$
as follows. 
For $\alpha\in P$ and $1 \leq j \leq m$:
$$\phi(g_1,\ldots,g_m) \restriction I_{\alpha} \cong_T\phi_j(g_j) \qquad \text{ if }(I_{\alpha},l) \in \zeta_j\text{ and } \mathcal{W}(I_{\alpha},l)=W_j;$$
$$\phi(g_1,\ldots,g_m) \restriction I_{\alpha} \cong_{T^{or}}\phi_j(g_j) \qquad \text{ if }(I_{\alpha},l) \in \zeta_j\text{ and } \mathcal{W}(I_{\alpha},l)=W_j^{-1}.$$

This is an injective group homomorphism, and moreover the image of each element satisfies Definition \ref{Krho} with the uniform constant $l$. This implies that the image of $\phi$ is a subgroup $A < G_\rho$, which is isomorphic to a direct sum of $m$ copies of $F'$.
Moreover, for every $1 \leq j \leq m, 1\leq i\leq n$, there is an element $g_{i,j} \in F'$ such that
$$\phi(\id, \ldots, g_{i,j}, \ldots, \id) = f_{i,\zeta_j}.$$
Therefore $f_{i,\zeta_j} \in A$ for each $1\leq i\leq n, 1\leq j\leq m$, and thus $f_i = f_{i,\zeta_1} \cdots f_{i,\zeta_m} \in A$ for each $1\leq i\leq n$.
This concludes the proof.
\end{proof}

We are ready to identify subgroups of $G_\rho$ with vanishing second bounded cohomology.

\begin{proposition}\label{prop:stabbac}
If $\Gamma \leq G_\rho$ admits a global fixpoint $x \in \mathbf{R}$, then $\HH^2_b(\Gamma) = 0$.
\end{proposition}

\begin{proof}
Since $x$ is a global fixpoint of $\Gamma$, and the left and right slope at $x$ are powers of $2$, we obtain a germ homomorphism $\Gamma \to \mathbf{Z}^2$, whose kernel is the subgroup $\Gamma_1$ consisting of elements $f\in \Gamma$ that pointwise fix some open neighbourhood $I_f$ of $x$. By Theorems \ref{thm:amenable} and \ref{thm:amenableext}, it suffices to show that $\HH^2_b(\Gamma_1) = 0$.

Every finitely generated subgroup $\Delta \leq \Gamma_1$ pointwise fixes a nonempty open interval which contains $x$. Therefore by Proposition \ref{Fprime}, $\Delta$ is isomorphic to a subgroup of a finite direct sum of copies of $F'$. In particular, $\Delta$ is isomorphic to a subgroup of $\textup{PL}^+([0, 1])$ and so $\HH^2_b(\Delta) = 0$ by Theorem \ref{thm:PLPP}. Thus $\HH^2_b(\Gamma_1) = 0$ by Proposition \ref{prop:dirun}.
\end{proof}

\subsection{Proof of Theorem \ref{theorem:mainNavas}}

The following proposition is the key idea behind the proof of the Theorem \ref{theorem:mainNavas}.

\begin{proposition}
\label{prop:mainNavas}
Let $\rho$ be a quasi-periodic labelling. Let $a_1, a_2, a_3 \in G_\rho$. Then there exist $h, g_1, g_2, g_3 \in G_\rho$ and non-empty sets $I_1 \subset I_2 \subset I_3 \subset \mathbf{R}$ with the following properties. Let $f_i := a_i^h$ for $i = 1, 2, 3$.
\begin{enumerate}
    \item Each of the groups $\langle g_1, g_2, g_3 \rangle$ and $\langle f_i, g_i \rangle : i = 1, 2, 3$ has a global fixpoint in $\mathbf{R}$.
    \item The element $f_i g_i$ fixes $I_i$ pointwise, for $i = 1, 2, 3$.
    \item $Supp(g_i) \subset I_{i+1}$ for $i = 1, 2$.
\end{enumerate}
\end{proposition}

Let us see how this proposition implies Theorem \ref{theorem:mainNavas}:

\begin{proof}[Proof of Theorem \ref{theorem:mainNavas}]
Let $\alpha \in \HH^2_b(G_\rho)$, and let $\omega$ be the unique homogeneous representative given by Theorem \ref{thm:bouarich}. This is associated to a central extension $E$ and a section $\sigma : G_\rho \to E$ such that $\omega(f, g) = \sigma(f)\sigma(g)\sigma(fg)^{-1}$. By Corollary \ref{cor:bouarich}, if $\Gamma \leq G_\rho$ satisfies $\HH^2_b(\Gamma) = 0$, then the restriction $\sigma|_\Gamma$ is a homomorphism. In particular, this holds when $\Gamma$ is abelian, by Theorem \ref{thm:amenable}, or has a global fixpoint, by Proposition \ref{prop:stabbac}.

We need to show that $\omega \equiv 0$, equivalently that $\sigma$ is a homomorphism. As in the proof of Theorem \ref{thm:main}, this amounts to showing that whenever $a_1, a_2, a_3 \in G_\rho$ satisfy $a_1 a_2 a_3 = \id$, we also have $\sigma(a_1)\sigma(a_2)\sigma(a_3) = \id$. \\

So let $a_1, a_2, a_3$ be as above, and let $h, g_1, g_2, g_3$ and $I_1, I_2, I_3$ be given by Proposition \ref{prop:mainNavas}. We similarly set $f_i = a_i^h$ for $i = 1, 2, 3$, and notice that $f_1 f_2 f_3 = \id$, and that by Lemma \ref{lem:conjugacy} it suffices to show that $\sigma(f_1)\sigma(f_2)\sigma(f_3) = \id$.

\begin{claim}
It holds
$$\sigma(f_1)\sigma(f_2)\sigma(f_3) \cdot \sigma(g_3) \sigma(g_2) \sigma(g_1) = \sigma(f_1 g_1) \sigma(f_2 g_2) \sigma(f_3 g_3).$$
\end{claim}

\begin{proof}[Proof of Claim]
By Proposition \ref{prop:stabbac} we have $\HH^2_b(\langle f_i, g_i \rangle) = 0$. Therefore:
$$\sigma(f_1)\sigma(f_2) \cdot \sigma(f_3) \sigma(g_3) \cdot \sigma(g_2) \sigma(g_1) = \sigma(f_1)\sigma(f_2) \cdot \sigma(f_3 g_3) \cdot \sigma(g_2) \sigma(g_1).$$
Now $f_3 g_3$ fixes $I_3$ pointwise, and $Supp(g_2)\subset I_3$, therefore $\langle f_3 g_3, g_2 \rangle$ is abelian and:
$$\sigma(f_1)\sigma(f_2) \cdot \sigma(f_3 g_3) \sigma(g_2) \cdot \sigma(g_1) = \sigma(f_1)\sigma(f_2) \cdot \sigma(g_2) \sigma(f_3 g_3) \cdot \sigma(g_1).$$
Similarly, $Supp(g_1)\subset I_2 \subset I_3$, therefore $\langle f_3 g_3, g_1 \rangle$ and $\langle f_2 g_2, g_1 \rangle$ are abelian. Thus:
\begin{align*}
    \sigma(f_1)\sigma(f_2) \sigma(g_2) \cdot \sigma(f_3 g_3) \sigma(g_1) &= \sigma(f_1)\sigma(f_2) \sigma(g_2) \cdot \sigma(g_1) \sigma(f_3 g_3) =\sigma(f_1) \cdot \sigma(f_2g_2) \sigma(g_1) \cdot \sigma(f_3 g_3) \\
    &= \sigma(f_1) \cdot \sigma(g_1) \sigma(f_2 g_2) \cdot \sigma(f_3g_3) = \sigma(f_1 g_1)\sigma(f_2 g_2)\sigma(f_3 g_3),
\end{align*}
which concludes the proof of the claim.
\end{proof}

Now the group $\langle f_1 g_1, f_2 g_2, f_3 g_3 \rangle$ fixes $I_1$ pointwise, which is a non-empty set. By Proposition \ref{prop:stabbac}, its second bounded cohomology vanishes. Therefore
$$\sigma(f_1 g_1) \sigma(f_2 g_2) \sigma(f_3 g_3) = \sigma(f_1 g_1 \cdot f_2 g_2 \cdot f_3 g_3).$$
Using the same argument as in the proof of the claim in reverse, we obtain
$$f_1 g_1 \cdot f_2 g_2 \cdot f_3 g_3 = f_1 f_2 f_3 \cdot g_3 g_2 g_1 =  g_3 g_2 g_1.$$
Therefore
$$\sigma(f_1 g_1 \cdot f_2 g_2 \cdot f_3 g_3) = \sigma(g_3 g_2 g_1) = \sigma(g_3) \cdot \sigma(g_2) \cdot \sigma(g_1),$$
where we used that $\HH^2_b(\langle g_1, g_2, g_3 \rangle) = 0$, by Proposition \ref{prop:stabbac}.

We have thus shown that
$$\sigma(f_1)\sigma(f_2)\sigma(f_3) \cdot \sigma(g_3) \sigma(g_2) \sigma(g_1) = \sigma(g_3) \sigma(g_2) \sigma(g_1).$$
This implies that $\sigma(f_1)\sigma(f_2)\sigma(f_3) = \id$, which is what we wanted to show.
\end{proof}

We deduce:

\begin{corollary}
\label{cor:integral}

$\HH^2_b(G_\rho; \mathbf{Z}) = 0$. In particular, every action of $G_\rho$ on the circle has a global fixpoint.
\end{corollary}

\begin{proof}
The inclusion $\mathbf{Z} \subset \mathbf{R}$ induces a long exact sequence \cite[Proposition 1.1]{gersten} (see also \cite[Proposition 8.2.12]{monod}) that begins as follows:
$$0 \to \HH^1(G_\rho; \mathbf{R}/\mathbf{Z}) \to \HH^2_b(G_\rho; \mathbf{Z}) \to \HH^2_b(G_\rho; \mathbf{R}) \to \cdots$$
Since $G_\rho$ is perfect, we have $\HH^1(G_\rho; \mathbf{R}/\mathbf{Z}) \cong \Hom(G_\rho; \mathbf{R}/\mathbf{Z}) = 0$. Therefore Theorem \ref{theorem:mainNavas} implies that $\HH^2_b(G_\rho; \mathbf{Z}) = 0$.
The statement about circle actions is a direct consequence of Ghys's Theorem \cite{ghys} (see also \cite[Proposition 10.20]{BC}).
\end{proof}

The rest of this section will be devoted to the proof of Proposition \ref{prop:mainNavas}.

Let $a_1, a_2, a_3 \in G_\rho$. By Theorem \ref{thm:properties}.1, each $a_i$ fixes a point in $\mathbf{R}$. Therefore there exists a compact interval $J_1 \subset \mathbf{R}$ with endpoints in $\mathbf{Z}[\frac{1}{2}]$ such that each $a_i$ fixes a point inside $J_1$. Next, choose compact intervals $J_2, J_3, J_4$ with endpoints in $\mathbf{Z}[\frac{1}{2}]$ such that
$$J_i \cup J_i \cdot a_i \subset J_{i+1} : 1 \leq i \leq 3.$$

By Theorem \ref{thm:properties}.3 and the definition of a proximal action, there exists an element $h \in G_\rho$ such that $J_4 \cdot h \subset (0, 1)$. We set $T_i := J_i \cdot h$, and note that $T_i \subset (0, 1)$ has endpoints in $\mathbf{Z}[\frac{1}{2}]$, for $i = 1, \ldots, 4$, since $G_\rho$ preserves $\mathbf{Z}[\frac{1}{2}]$.

As in the statement of Proposition \ref{prop:mainNavas}, we set $f_i := a_i^h$. By Definition \ref{Krho} and Theorem \ref{characterisation}, there exists $k \in \mathbf{N}$ such that whenever $x\in [0,1], y \in [n,n+1]$ satisfy $x - y\in \mathbf{Z}$ and $\mathcal{W}([0, 1], k) = \mathcal{W}([n, n+1], k)$, it holds that:
$$x - x \cdot f_i = y - y \cdot f_i : i = 1, 2, 3.$$
Fix $m \neq 0$ such that $\mathcal{W}([0,1], k) = \mathcal{W}([m,m+1], k)$: such an $m$ must exist because $\rho$ is quasi-periodic. However $\rho$ is not periodic by Lemma \ref{lem:notperiodic}, therefore there exists $l > k$ such that $\mathcal{W}([0,1], l) \neq \mathcal{W}([m, m+1], l)$. Moreover, since $\mathcal{W}([0,1], k) = \mathcal{W}([m,m+1], k)$, we also have $\mathcal{W}([0,1], l) \neq \mathcal{W}^{-1}([m,m+1], l)$.

We set $W := \mathcal{W}([0,1], l), \omega := (W, l) \in \Omega$ and $$\mathcal{N}_1 := \{ n \in \mathbf{Z} : \mathcal{W}([n,n+1], l) = W \}\qquad \mathcal{N}_2 := \{ n \in \mathbf{Z} : \mathcal{W}([n,n+1], l) = W^{-1} \}.$$
Note that by construction $m \notin \mathcal{N}_1\cup \mathcal{N}_2$. Finally, we set $$I_i := (T_i + \mathcal{N}_1)\bigcup (T_i\cdot \iota+\mathcal{N}_2).$$ for $i = 1, \ldots, 4$. Then these sets satisfy $\emptyset \subset I_1 \subset I_2 \subset I_3 \subset I_4 \subset \mathbf{R}$, and all of these inclusions are strict. \\

We will now use $\omega$ to define the elements $g_i$: these will be special elements as in Definition \ref{Specialelements}. For $i = 1, 2, 3$, let $\alpha_i$ be an element of $F$ supported on $T_{i+1} \subset (0, 1)$ such that $\alpha_i \restriction T_i\cdot f_i = f_i^{-1} \restriction T_i \cdot f_i$. This is possible since $T_i \cdot f_i \subset T_{i+1} \subset (0, 1)$ for all $i = 1, 2, 3$, and the restriction $f_i^{-1} \restriction T_i \cdot f_i$ is piecewise linear with breakpoints in $\mathbf{Z}[\frac{1}{2}]$ and slopes (when they exist) equal to powers of $2$. Note that $\alpha_i \in F'$, and so we can define $g_i := \lambda_\omega(\alpha_i)$. \\

Now that we have defined all objects involved, we will prove that each item of Proposition \ref{prop:mainNavas} holds.

\begin{claim}
\label{claim:support}

For $i = 1, 2, 3$, we have $Supp(g_i) \subset I_{i+1}$. In particular, $\langle g_1, g_2, g_3 \rangle$ has a global fixpoint.
\end{claim}

\begin{proof}
By construction, $\alpha_i \in F$ is supported on $T_{i+1} \subset (0, 1)$. Let $x \in \mathbf{R}$ be such that $x \cdot g_i \neq x$, and let $n \in \mathbf{Z}$ be such that $x \in [n, n+1]$. Since $g_i = \lambda_\omega(\alpha_i)$, it holds that either: $\mathcal{W}([n, n+1], l) = W$ or $\mathcal{W}([n, n+1], l) = W^{- 1}$. Then by definition, the following holds. In the former case, $n \in \mathcal{N}_1$ and $x \in T_{i+1} + n \subset I_{i+1}$. In the latter case, $n \in \mathcal{N}_2$ and $x \in T_{i+1}\cdot \iota + n \subset I_{i+1}$. This shows that $Supp(g_i) \subset I_{i+1}$.

Now $\langle g_1, g_2, g_3 \rangle$ is supported on $I_4$, and so every point in $\mathbf{R} \setminus I_4 \neq \emptyset$ is a global fixpoint. (Note that $\mathbf{Z}$ will, in particular, be fixed pointwise.)
\end{proof}

\begin{claim}
\label{claim:fixpoint}

For $i = 1, 2, 3$, the group $\langle f_i, g_i \rangle$ has a global fixpoint, and $f_i g_i$ fixes $I_i$ pointwise.
\end{claim}

\begin{proof}
Fix $i \in \{1, 2, 3\}$. Recall that we fixed an element $m \in \mathbf{Z}$ such that $m \notin \mathcal{N}_1\cup \mathcal{N}_2$ (that is $\mathcal{W}([m,m+1], l) \neq W^{\pm 1}$), but $\mathcal{W}([m, m+1], k) = \mathcal{W}([0, 1], k)$. Moreover, recall that $f_i$ has a fixpoint $x_i$ inside $T_1$, because $a_i$ has a fixpoint inside $J_1$. It follows from the definition of $k$ that $m + x_i \in [m, m+1]$ is a fixpoint of $f$. Moreover, from Claim \ref{claim:support}, it follows that $g_i$ fixes $[m, m+1]$ pointwise. Therefore $m + x_i$ is a global fixpoint of $\langle f_i, g_i \rangle$.

By construction, $$f_i^{-1} \restriction T_i \cdot f_i = \alpha_i \restriction T_i \cdot f_i = g_i \restriction T_i \cdot f_i$$ Therefore, $f_i g_i$ fixes $T_i$ pointwise.

Now let $x\in I_i$. We wish to show that $x\cdot f_ig_i=x$. Let $n\in \mathbf{Z}$ be such that $x\in [n,n+1)$.
Then either $n\in \mathcal{N}_1$ and $x\in T_i+n$, or $n\in \mathcal{N}_2$ and $x\in T_i\cdot \iota+n$.

By definition of $l$, the fact that $l>k$, and construction of the $g_i$, we have the following. If $x$ is a fixpoint of $f_i g_i$ and $y \in \mathbf{R}$ satisfies $x - y = n \in \mathcal{N}_1$, then $y$ is also a fixpoint of $f_i g_i$. Similarly, if $x$ is a fixpoint of $f_i g_i$ and $y \in \mathbf{R}$ satisfies $x - y\cdot \iota = n \in \mathcal{N}_2$, then $y$ is also a fixpoint of $f_i g_i$. Our claim follows.
\end{proof}

\begin{proof}[Proof of Proposition \ref{prop:mainNavas}]
Item $1$ follows from Claims \ref{claim:support} and \ref{claim:fixpoint}, Item $2$ from Claim \ref{claim:fixpoint} and Item $3$ from Claim \ref{claim:support}.
\end{proof}

\subsection{Piecewise linear homeomorphisms of flows}
\label{ss:nico}

We end by explaining an alternative approach to our results, which is more conceptual and applies to the groups $T(\varphi, \sigma)$ from \cite{LBMB}. We will only sketch the definitions and outline the argument, and refer the reader to \cite{MBT, LBMB} throughout for further details. \\

Let $X$ be a nonempty Stone space (that is, a totally disconnected compact Hausdorff space), and let $\varphi$ be a homeomorphism of $X$, such that the induced action of $\mathbf{Z}$ on $X$ is free. Let $Y^\varphi := (X \times \mathbf{R}) / \mathbf{Z}$, where $\mathbf{Z}$ acts on $X \times \mathbf{R}$ diagonally via $\varphi \times (x \mapsto x+1)$. Given a clopen set $C \subset X$, and an open dyadic interval $J \subset \mathbf{R}$ small enough so that the inclusion $C \times J \subset X \times \mathbf{R}$ stays injective after taking the quotient, we call the induced embedding $\pi_{C \times J} : C \times J \to Y^\varphi$ a \emph{dyadic chart}. The group $T(\varphi)$ is defined as the group of all those homeomorphisms $g$ of $Y^\varphi$ that are isotopic to the identity, and that are piecewise given by orientation-preserving dyadic homeomorphisms on dyadic charts. More precisely for every $y \in Y^\varphi$ there exists a dyadic chart $C \times J$ and an orientation-preserving dyadic homeomorphism $f : J \to I$ such that $g(\pi_{C \times J}(C \times J)) = \pi_{C \times I}(C \times I)$, and $g$ takes the form $(x, t) \mapsto (x, f(t))$ on those charts.

Now let $\sigma$ be a homeomorphism of $X$ such that $\sigma^2 = \id$ and $\sigma \varphi \sigma = \varphi^{-1}$. This defines an action of $D_\infty$ on $X$, which we assume to be free, and similarly defines a space $Y^{\varphi, \sigma} := (X \times \mathbf{R}) / D_\infty$. We analogously define \emph{dyadic charts} $\pi_{C, J} : C \times J \to Y^{\varphi, \sigma}$ for pairs $C, J$ such that this map is an embedding, and denote by $T(\varphi, \sigma)$ the group of homeomorphisms of $Y^{\varphi, \sigma}$ that are isotopic to the identity, and that are piecewise given by dyadic homeomorphisms on dyadic charts. \\

The groups $T(\varphi)$ are left orderable \cite[Proposition 3.3]{MBT}, and $T(\varphi, \sigma)$ embeds into $T(\varphi)$ \cite[Proposition 4.12]{LBMB}, so $T(\varphi, \sigma)$ is also left orderable. Moreover, if $\varphi$ is conjugate to a subshift, then $T(\varphi)$ is finitely generated \cite[Theorem A]{MBT}; and if $\varphi$ is minimal, then $T(\varphi)$ is simple \cite[Theorem B]{MBT}. In \cite[Remark 4.18]{LBMB}, the authors point out that these results can be generalized to the groups $T(\varphi, \sigma)$ without much effort. Moreover, in \cite[Remark 4.19]{LBMB}, they explain that, if $\rho$ is a quasi-periodic labelling, then one can define a subshift $\varphi$ in terms of $\rho$, and $\sigma$ to be a sort of formal inversion on $\rho$, to obtain $G_\rho \cong T(\varphi, \sigma)$. \\

Let us now explain how to prove that $\HH^2_b(T(\varphi, \sigma)) = 0$ by using the action on $Y^{\varphi, \sigma}$ (in place of the action of $G_\rho$ on the line). For every $y \in Y^{\varphi, \sigma}$, as is remarked in the proof of \cite[Theorem 4.13]{LBMB}, every finitely generated group that fixes pointwise a neighbourhood of $y$ is isomorphic to a subgroup of a finite direct sum of copies of $F'$: this is the analogue of our Proposition \ref{Fprime}. By using that the germs at $y$ are amenable (which follows from the piecewise linear nature of these groups), this implies as in Proposition \ref{prop:stabbac} that every subgroup of $T(\varphi, \sigma)$ that admits a global fixpoint in $Y^{\varphi, \sigma}$ has vanishing second bounded cohomology.

To conclude, we need to prove an analogue of Proposition \ref{prop:mainNavas}: the rest of the proof goes through analogously. The natural way to adapt the statement is by replacing $\mathbf{R}$ with $Y^{\varphi, \sigma}$ and intervals with dyadic charts. This can be achieved similarly to our construction. The two main properties of $T(\varphi, \sigma)$, we use are the following. First, every element in $T(\varphi, \sigma)$ fixes a point in $Y^{\varphi, \sigma}$: this follows from \cite[Lemma 4.16(ii)]{LBMB}. Secondly, the action of $T(\varphi, \sigma)$ on $Y^{\varphi, \sigma}$ is extremely proximal: this is contained in \cite[Proposition 4.17]{LBMB}).

Therefore we may proceed as in our proof of Proposition \ref{prop:mainNavas}. Let $a_1, a_2, a_3 \in T(\varphi, \sigma)$. Each fixes a point in $Y^{\varphi, \sigma}$, therefore by proximality there exists a dyadic chart $J_1 \subset Y^\varphi$ such that each $a_i$ fixes a point in $J_1$. Then we can choose the sets $J_2, J_3, J_4$ analogously, namely such that $J_i \cup J_i \cdot a_i \subset J_{i+1}$. Using proximality again, we may replace the $a_i$ by conjugates $f_i := a_i^h$ and assume that each $J_i$ is a small dyadic chart. This allows to define piecewise elements $g_1, g_2, g_3$ so that $g_i$ is supported on $J_{i+1}$ and coincides with $f_i^{-1}$ on $J_i$. Then our same proof shows that the $g_i$ satisfy the conclusions of Proposition \ref{prop:mainNavas}.

This argument then also encompasses the case of the groups $G_\rho$ by \cite[Remark 4.19]{LBMB}. \\

As for the groups $T(\varphi)$, the argument does not carry over, since it is not true that every element in $T(\varphi)$ fixes a point in $Y^{\varphi}$ (although the other properties that we used about $T(\varphi, \sigma)$ do hold for $T(\varphi)$). Thus, we do not know whether $\HH^2_b(T(\varphi)) = 0$ or not.

\section{Groups acting on the circle}
\label{s:circle}

Having gathered a good understanding of the second bounded cohomology of groups acting on the interval and on the line, we move to the study of groups acting on the circle. Most interesting actions lead to non-trivial real Euler classes (Theorem \ref{thm:burger}), so one cannot expect a general vanishing result, but Theorem \ref{thm:T} is the next-best thing. In order to prove it we use the following general result (see also \cite{MonodNariman}):

\begin{proposition}[{\cite[Proposition 6.9]{bcbinate}}]
\label{prop:circle}
Let $G$ be a group of orientation-preserving homeomorphisms of the circle. Suppose that $G$ has an orbit $Y$ such that:
\begin{enumerate}
    \item For $k = 2, 3$ the action of $G$ on circularly ordered $k$-tuples in $Y$ is transitive;
    \item For $k = 1, 2$ the stabilizer $H_k$ of a $k$-tuple in $Y$ satisfies $\HH^2_b(H_k) = 0$.
\end{enumerate}
Then $\HH^2_b(G)$ is one-dimensional, spanned by the Euler class.
\end{proposition}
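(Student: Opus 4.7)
The plan is to build a small cochain complex computing $\HH^\bullet_b(G)$ in low degrees from the orbit $S$, compute its cohomology in degree two directly, and match the generator with the real Euler class. Let $C^\bullet := \ell^\infty_{\mathrm{alt}}(S^{\bullet+1})^G$ denote the complex of bounded alternating $G$-invariant cochains on tuples in $S$, with the usual alternating-sum coboundary.

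I would first compute $C^\bullet$ in degrees at most two using the transitivity hypothesis. Transitivity on $S$ gives $C^0 \cong \mathbf{R}$. For $C^1$: transitivity on ordered pairs of distinct points produces, in particular, a $g \in G$ exchanging any two such points, so any alternating $G$-invariant $c \colon S^2 \to \mathbf{R}$ satisfies $c(x,y) = c(gx, gy) = c(y,x) = -c(x,y)$, forcing $C^1 = 0$. For $C^2$: alternating cochains vanish on degenerate triples, and transitivity on positively oriented triples (combined with alternation flipping sign on negatively oriented ones) collapses the space to one dimension, spanned by the orientation cocycle $c(x,y,z) := \sgn(x,y,z)$ that assigns $\pm 1$ according to the circular order. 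A direct check gives $\delta c = 0$, so since $C^1 = 0$ one has $\HH^2(C^\bullet) \cong \mathbf{R}$.

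The core step is identifying $\HH^2(C^\bullet)$ with $\HH^2_b(G)$ via a double-complex spectral sequence argument. Consider $D^{p,q} := \CC^p_b(G; \ell^\infty_{\mathrm{alt}}(S^{q+1}))$, with the inhomogeneous bar differential in the $p$-direction and the simplicial differential in the $q$-direction. Since the augmented complex $\mathbf{R} \hookrightarrow \ell^\infty_{\mathrm{alt}}(S^{\bullet+1})$ is exact (a standard resolution), filtering by $q$ first yields a spectral sequence abutting to $\HH^\bullet_b(G)$. For the filtration by $p$, I would apply Shapiro's lemma in bounded cohomology: decomposing $\ell^\infty_{\mathrm{alt}}(S^{q+1})$ over $G$-orbits of $(q+1)$-tuples, one obtains $\HH^p_b(G; \ell^\infty_{\mathrm{alt}}(S^{q+1})) \cong \prod_{[T]} \HH^p_b(\mathrm{Stab}(T))$, where orbits of tuples with repetitions are killed by the alternation. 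In total degrees $\leq 2$, the stabilizers that survive are $G$ itself, $H_1$, and $H_2$; the hypothesis $\HH^2_b(H_1) = \HH^2_b(H_2) = 0$, combined with $\HH^1_b = 0$ and $\HH^0_b = \mathbf{R}$, forces the $E_2$-page to be concentrated on the $p = 0$ row in those degrees, recovering $C^\bullet$ and hence $\HH^2_b(G) \cong \HH^2(C^\bullet) \cong \mathbf{R}$.

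Finally I would identify the generator with $e_b^{\mathbf{R}}$: fixing a base point $p_0 \in S$, the inhomogeneous cocycle $(g_1, g_2) \mapsto \sgn(p_0, p_0 g_1, p_0 g_1 g_2)$ is the classical Poincar\'e--Ghys bounded representative of the real Euler class up to a standard normalization, and it is the pullback of the orientation cocycle along the orbit map. The principal technical obstacle lies in the spectral-sequence bookkeeping of the third paragraph: carefully verifying that Shapiro's lemma is compatible with the alternating subcomplex, and that orbits of degenerate tuples, whose stabilizers are not controlled by the hypotheses, do indeed contribute nothing after alternation. Once these bookkeeping points are settled, the rest reduces to the transitivity computation of $C^\bullet$ and the standard identification with a known bounded Euler cocycle.
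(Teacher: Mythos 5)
This proposition is not proved in the paper: it is imported verbatim from the cited reference, and your argument is essentially a reconstruction of the proof given there (a boundedly acyclic resolution argument: the double complex $\CC^p_b(G;\ell^\infty(S^{q+1}))$, Shapiro's lemma to reduce the $p\geq 1$ terms to $\HH^p_b(H_k)$, and the transitivity computation identifying the invariants complex with the span of the orientation cocycle, whose pullback along the orbit map is the standard bounded Euler cocycle). The one place I would reorganize your bookkeeping: run the spectral sequence with the \emph{full} complex $\ell^\infty(S^{\bullet+1})$, where the orbit decomposition (including degenerate tuples, whose stabilizers are again of the form $H_j$ with $j\leq q+1$, hence controlled in the relevant range since $\HH^1_b$ always vanishes and $\HH^2_b(H_1)=\HH^2_b(H_2)=0$) makes Shapiro clean, and only pass to the alternating subcomplex at the very end via the standard equivariant chain homotopy equivalence given by the alternation projection, in order to compute $\HH^2$ of the invariants; trying to apply Shapiro directly to $\ell^\infty_{\mathrm{alt}}$ is awkward because alternation mixes orbits.
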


Using the results from the previous section, Theorem \ref{thm:T} is then an immediate consequence:

\begin{proof}[Proof of Theorem \ref{thm:T}]
Let $G$ be a group of piecewise linear (or piecewise projective) homeomorphisms of the circle. Then the stabilizer of a $k$-tuple, for all $k \geq 1$, can be realized as a group of piecewise linear (or piecewise projective) homeomorphisms of the interval (or the real line).
Now suppose that $G$ satisfies the hypotheses of Theorem \ref{thm:T}, that is, $G$ has an orbit $Y$ such that the action on circularly ordered pairs and triples in $Y$ is transitive. Then the results follows by combining Theorem \ref{thm:PLPP} with Proposition \ref{prop:circle}.
\end{proof}

Theorem \ref{thm:T} was already known for Thompson's group $T$ \cite{spectrum1}, but the only available proofs made use of the computation of its cohomology \cite{ghys_serg}. The general statement of Theorem \ref{thm:T} makes it applicable to several other groups, most notably to certain Stein--Thompson groups, to the finitely presented infinite simple group $S$ constructed by the second author in \cite{lodhaS} and to certain irrational-slope analogues of Thompson's group, which will be the subject of the next section.
The special case of Stein--Thompson groups had been conjectured by Heuer and L\"oh \cite[Conjecture A.5]{spectrumv1}. See \cite{monsters} for a detailed discussion of these examples. \\

Recall the notion of \emph{proximal action} from Subsection \ref{ss:dynamics}. Combining Theorem \ref{thm:T} with Corollary \ref{cor:onedim}, we obtain:

\begin{corollary}\label{cor:rigidity}
Let $G$ be a group satisfying the hypotheses of Theorem \ref{thm:T}. Suppose that the orbit $Y$ is moreover dense. Then $G$ admits a unique proximal action on the circle, up to conjugacy.
\end{corollary}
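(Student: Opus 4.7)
The plan is to combine Theorem~\ref{thm:T} with Corollary~\ref{cor:onedim}. Since Theorem~\ref{thm:T} gives that $\HH^2_b(G;\mathbf{R})$ is one-dimensional, Corollary~\ref{cor:onedim} immediately yields that $G$ admits at most one conjugacy class of proximal actions on the circle. The remaining task is to verify that the given action is itself proximal; the statement of the corollary will then follow. By the equivalence recorded in Section~\ref{s:preli}, for actions on $\mathbf{S}^1$ the property ``proximal'' is equivalent to the conjunction of minimality and the contracting property, so the verification splits in two.

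For minimality, I would invoke the standard trichotomy for orientation preserving actions on $\mathbf{S}^1$: any such action either admits a finite orbit, is minimal, or admits a unique minimal invariant Cantor set on which every orbit accumulates. The density of $S$ rules out the Cantor case, since a nowhere dense invariant set cannot be contained in an orbit closure that equals $\mathbf{S}^1 = \overline{S}$, while every orbit closure must contain the unique minimal set. The finite-orbit case is ruled out by combining density with transitivity on circularly ordered pairs in $S$, which forces $S$ to be infinite and incompatible with the existence of an invariant proper closed set. Hence every orbit is dense.

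For the contracting property, fix a proper closed arc $A \subset \mathbf{S}^1$ and $\varepsilon > 0$. Using density of $S$, pick $s_0 \in S$ in the complement of $A$, and then $s_1, s_2 \in S$ again in the complement of $A$, placed just outside the endpoints of $A$ on the $s_0$-side, so that $A$ is contained in the closed arc $[s_1, s_2]$ not containing $s_0$. Then $(s_0, s_1, s_2)$ is a circularly ordered triple in $S$. Choose, again by density, a second triple $(s_0', s_1', s_2') \in S^3$ in the same circular order with the arc $[s_1', s_2']$ not containing $s_0'$ of diameter less than $\varepsilon$; this is achievable by clustering $s_1'$ and $s_2'$ close together and placing $s_0'$ on the opposite side. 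Transitivity on circularly ordered triples then supplies $g \in G$ with $s_i \cdot g = s_i'$ for $i = 0,1,2$. Since $g$ preserves the circular orientation, the arc $[s_1, s_2]$ not containing $s_0$ is mapped exactly to the arc $[s_1', s_2']$ not containing $s_0'$; in particular, $A \cdot g$ has diameter less than $\varepsilon$.

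The main obstacle I foresee is the minimality step: while the trichotomy itself is standard, one must carefully exclude all degenerate scenarios (most notably the existence of a global fixed point), and this is where the interplay between density of $S$ and transitivity on pairs must be used most delicately. The contracting step, by contrast, is a clean geometric exploitation of $3$-transitivity, with only the bookkeeping of circular order conventions to watch (so that $g$ genuinely contracts $A$ rather than mapping it to the complementary large arc).
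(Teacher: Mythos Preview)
Your plan matches the paper's one-line proof exactly: combine Theorem~\ref{thm:T} with Corollary~\ref{cor:onedim}, after checking that the given action is proximal; your contracting argument via $3$-transitivity on a dense orbit is correct and is precisely what the paper means by ``easily seen.''

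The obstacle you flag in the minimality step is real, and your stated argument does not quite work. In the exceptional-minimal case, every orbit closure \emph{contains} the Cantor set $K$, but this yields nothing when the orbit closure is all of $\mathbf{S}^1$; a dense orbit $S$ is perfectly compatible with an invariant Cantor set disjoint from $S$. Likewise, $S$ being infinite does not by itself rule out a finite invariant set disjoint from $S$. The fix uses $3$-transitivity rather than $2$-transitivity. If $K \subsetneq \mathbf{S}^1$ is a nonempty closed invariant set with at least two complementary arcs, pick a positively oriented triple $(s_1,s_2,s_3) \in S^3$ with $s_1,s_2$ in one gap and $s_3$ in another, and a positively oriented triple $(t_1,t_2,t_3) \in S^3$ with $t_1,t_3$ in one gap and $t_2$ in another; since $G$ must permute the gaps, no element can carry the first triple to the second. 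The remaining case of a single global fixed point $p$ forces $G$ to preserve the linear order on $\mathbf{S}^1 \setminus \{p\} \cong \mathbf{R}$, which again contradicts transitivity on circularly ordered triples in $S$. Alternatively, and more economically, you can drop the trichotomy altogether: your contracting argument, run with the target arc $[s_1',s_2']$ placed inside an \emph{arbitrary} prescribed open arc $I$, directly establishes the proximality condition of Section~\ref{s:preli}, and minimality is then automatic.
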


\begin{proof}
This is a combination of the aforementioned results: an action which is transitive on circularly ordered pairs of a dense subset $Y$ of the circle is easily seen to be proximal. Indeed, let $I, J$ be open subset of the circle such that the closure of $J$ is proper and compact. Let $K$ a closed arc with endpoints in $Y$ that contains $J$, and let $L$ be a closed arc with endpoints in $Y$ that is contained in $I$. Using the hypothesis on double transitivity, we obtain an element $f \in G$ such that $J \cdot f \subset K \cdot f = L \subset I$.
\end{proof}

With an application of Gromov's Mapping Theorem (Theorem \ref{thm:mapping}), the computation of the second bounded cohomology of lifts of groups acting on the circle is immediate:

\begin{corollary}\label{cor:lift}
Let $G$ be as in Proposition \ref{prop:circle}, and let $\overline{G}$ be a lift of $G$ to the real line. Then $\HH^2_b(\overline{G})$ is one-dimensional, spanned by the rotation quasimorphism.
\end{corollary}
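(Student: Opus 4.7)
The plan is to combine Gromov's Mapping Theorem with Proposition~\ref{prop:circle}, and then identify the resulting generator of $\HH^2_b(\overline{G})$ with the class induced by the rotation quasimorphism.

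Since $\overline{G}$ is a lift of $G$, the restriction $\eta|_{\overline{G}} : \overline{G} \to G$ fits into a central extension
$$1 \to K \to \overline{G} \xrightarrow{\eta} G \to 1,$$
whose kernel $K \leq \mathbf{Z}$ is cyclic, and in particular amenable. Applying Theorem~\ref{thm:mapping} to this extension, the pullback $\eta^* : \HH^2_b(G) \to \HH^2_b(\overline{G})$ is an isomorphism. By Proposition~\ref{prop:circle}, $\HH^2_b(G)$ is one-dimensional and spanned by the real bounded Euler class $e_b^{\mathbf{R}}$ of the inclusion $G \hookrightarrow \Homeo^+(\mathbf{R}/\mathbf{Z})$, so $\HH^2_b(\overline{G})$ is one-dimensional and spanned by $\eta^* e_b^{\mathbf{R}}$.

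The remaining step is to identify $\eta^* e_b^{\mathbf{R}}$ with the class $[\delta^1(\rot|_{\overline{G}})]$ defined via Theorem~\ref{thm:hbqm}. For this I would take the standard integer-valued bounded representative $\omega$ of $e_b$ obtained from the section $s : G \to \overline{\Homeo^+(\mathbf{R}/\mathbf{Z})}$ sending $g$ to its unique lift $\tilde{g}$ with $\tilde{g}(0) \in [0,1)$, and introduce the (unbounded) cochain $\phi: \overline{G} \to \mathbf{Z}$ defined by $\phi(\tilde g) := s(\eta(\tilde g))^{-1} \tilde g$, which under the identification of $\mathbf{Z}$ with the group of integer translations equals $\lfloor \tilde g(0) \rfloor$. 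A direct calculation in the group law of $\overline{\Homeo^+(\mathbf{R}/\mathbf{Z})}$ gives $\delta^1 \phi = \pm \eta^* \omega$ at the cochain level, while the classical estimate $|\rot(\tilde g) - \tilde g(0)| < 1$ shows that $\phi - \rot|_{\overline{G}}$ is uniformly bounded. Hence $\delta^1 \phi$ and $\delta^1(\rot|_{\overline{G}})$ differ by a bounded coboundary, so they represent the same class in $\HH^2_b(\overline{G})$.

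The only real technical point is the cocycle-level identification in the last paragraph; both $\phi$ being a cochain primitive of $\eta^* \omega$ and its proximity to $\rot$ are classical facts (see, e.g., \cite[Chapter 10]{BC}). Once this is in hand, Corollary~\ref{cor:lift} follows immediately from the Gromov Mapping Theorem argument: the generator $\eta^* e_b^{\mathbf{R}}$ of the one-dimensional space $\HH^2_b(\overline{G})$ coincides with the class of $\rot|_{\overline{G}}$ (up to sign).
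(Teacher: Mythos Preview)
Your argument is correct and follows essentially the same route as the paper: apply Gromov's Mapping Theorem to the amenable-kernel extension $\overline{G} \to G$, then identify the pullback of the Euler class with the class of the rotation quasimorphism. The only difference is that the paper dispatches the identification step by a direct citation to \cite[Proposition 10.26]{BC}, whereas you unpack that computation explicitly.
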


\begin{proof}
The group $\overline{G}$ is a central extension of $G$, so by Theorem \ref{thm:mapping} the quotient $\overline{G} \to G$ induces an isomorphism in bounded cohomology. By \cite[Proposition 10.26]{BC}, the real Euler class is sent to the image of the rotation quasimorphism in second bounded cohomology.
\end{proof}

In particular, we are able to reduce the computation of the stable commutator length in such groups to an evaluation of the rotation quasimorphism:

\begin{corollary}\label{cor:sclrot}
Let $G$ be a group satisfying the hypotheses of Theorem \ref{thm:T}. Suppose that the orbit $Y$ is moreover dense.
Then
$\scl(g) = \frac{\rot(g)}{2}$
for all $g \in \overline{G}'$.
\end{corollary}

\begin{proof}
By Corollary \ref{cor:lift} and Bavard duality, if $D \geq 0$ is the defect of the rotation quasimorphism of $\overline{G}$, we have $\scl(g) = \rot(g)/2D$, for all $g \in \overline{G}'$. Therefore it suffices to show that $D = 1$.
Recall from Example \ref{ex:rot} that $\rot$ has defect $1$ on $\overline{\Homeo^+(\mathbf{R}/\mathbf{Z})}$ \cite[Proposition 2.92]{calegari}. Therefore $D \leq 1$, and it remains to prove that $D \geq 1$.

Let $(a, b, c, d)$ be a circularly ordered $4$-tuple in $Y$. By double transitivity there exist elements $g, h \in G$ such that $(b, c) \cdot g = (a, d)$ and $(c, d) \cdot h = (b, a)$. It follows that
$$a \cdot [g, h] = a \cdot g^{-1} h^{-1} g h = b \cdot h^{-1} g h = c \cdot gh = d \cdot h = a.$$
Choosing lifts $\overline{g}, \overline{h} \in \overline{G}$ with $\rot(\overline{g}), \rot(\overline{h}) \leq 1$, we obtain an element $\overline{a} \in \mathbf{R}$ such that $\overline{a} \cdot [\overline{g}, \overline{h}]  = \overline{a} + 1$. It follows that $\rot([\overline{g}, \overline{h}]) = 1$. Since a homogeneous quasimorphism evaluates to at most the defect on a commutator \cite[2.2.3]{calegari}, we conclude.
\end{proof}


\section{The group $T_{\tau}$ and algebraic irrational \\ stable commutator length}
\label{s:scl}

Let $\tau := \frac{\sqrt{5} - 1}{2}$ be the small golden ratio. In \cite{cleary}, the author defines the \emph{golden ratio Thompson group} $F_\tau$ as the group of orientation-preserving piecewise linear homeomorphisms $g$ of $[0, 1]$ such that the following two properties are satisfied:
\begin{enumerate}
    \item $g$ has finitely many breakpoints, all of which lie in $\mathbf{Z}[\tau]$;
    \item The derivatives, whenever they exist, are integer powers of $\tau$.
\end{enumerate}

Note that it automatically follows that $F_\tau$ preserves $\mathbf{Z}[\tau] \cap [0, 1]$. This is the analogous definition as Thompson's group $F$, where $\tau$ plays the role of $\frac{1}{2}$. 
This group was studied combinatorially via tree diagrams in \cite{burilloF}. The group action on the interval corresponding to the tree diagrams defined in \cite{burilloF} coincides with the above action.

We consider the following ``circle" analogue of the group $F_\tau$.

\begin{definition}\label{Definition: main}
Define the \emph{golden ratio Thompson group} $T_\tau$ as the group of orientation-preserving piecewise linear homeomorphisms $g$ of $\mathbf{S}^1 = \mathbf{R}/\mathbf{Z}$ such that the following three properties are satisfied:

\begin{enumerate}
    \item $g$ has finitely many breakpoints, all of which lie in in $\mathbf{Z}[\tau]/\mathbf{Z}$;
    \item The derivatives, whenever they exist, are integer powers of $\tau$;
    \item $g$ preserves $\mathbf{Z}[\tau]/\mathbf{Z}$.
\end{enumerate}
\end{definition}

Note that, unlike with $F_\tau$, it is necessary here to impose the third condition, since every rotation satisfies the first two. Again, this is the analogous definition as Thompson's group $T$, where $\tau$ plays the role of $\frac{1}{2}$. Crucially, $T_\tau$ contains all rotations with angles in $\mathbf{Z}[\tau]$, and the stabilizer of a point $x \in \mathbf{Z}[\tau]/\mathbf{Z}$ is naturally isomorphic to $F_\tau$. \\

The group $T_{\tau}$ has been studied in \cite{burilloT} by combinatorial means.
We now clarify that the definition of $T_{\tau}$ provided in \cite{burilloT}, by means of combinatorial tree diagrams, defines the same group as Definition \ref{Definition: main}. Note that the group action on $\mathbf{R/Z}$ defined as $T_{\tau}$ in page $2$ of \cite{burilloT} is incorrect, since the third condition of our Definition \ref{Definition: main} is omitted. However, the combinatorial model described by tree diagrams in \cite{burilloT} is correct and it corresponds to our definition above.

\begin{lemma}\label{lemma: same group}
The group action of $T_{\tau}$ on $\mathbf{R/Z}$ in Definition \ref{Definition: main} and the group action on $\mathbf{R/Z}$ corresponding to the group $T_{\tau}$ defined by combinatorial tree diagrams in \cite[Section 2]{burilloT} represent the same group action.
\end{lemma}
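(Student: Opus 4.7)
The plan is to verify the two inclusions between the analytic definition of $T_\tau$ in Definition \ref{Definition: main} and the combinatorial tree-diagram model of \cite{burilloT} separately.

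The forward inclusion (combinatorial $\Rightarrow$ analytic) is direct. A tree diagram consists of a pair of $\tau$-adic subdivisions of $\mathbf{R}/\mathbf{Z}$ together with a cyclic matching of their leaves, where each $\tau$-adic subdivision is built by iterated splits $[a,b] \mapsto [a, a+\tau(b-a)] \cup [a+\tau(b-a), b]$ starting from $[0,1]$. Using $\tau^2 = 1-\tau$, all breakpoints stay in $\mathbf{Z}[\tau]$, leaf lengths are products of powers of $\tau$, and the induced PL homeomorphism has slopes that are ratios of leaf lengths (hence powers of $\tau$) while preserving $\mathbf{Z}[\tau]/\mathbf{Z}$ because both partitions are supported there. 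All three conditions of Definition \ref{Definition: main} follow immediately.

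For the reverse inclusion the key input is the combinatorial fact, implicit in \cite{burilloF}, that every $x \in \mathbf{Z}[\tau] \cap (0,1)$ appears as a leaf endpoint of some $\tau$-adic subdivision of $[0,1]$; equivalently, any finite subset of $\mathbf{Z}[\tau] \cap (0,1)$ is contained in the leaf set of such a subdivision. This is proved via the unique Zeckendorf-type base-$\tau$ expansion $x = \sum_{i \geq 1} \epsilon_i \tau^i$ with $\epsilon_i \in \{0,1\}$ and no two consecutive $1$'s, whose finiteness is equivalent to $x \in \mathbf{Z}[\tau]$ and whose digit sequence prescribes a finite path of L/R splits ending at $x$. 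Given this, for $g$ satisfying Definition \ref{Definition: main} I would iteratively produce matching $\tau$-adic subdivisions $\mathcal{P}$ of the source circle and $\mathcal{Q}$ of the target such that $g$ carries the $i$-th leaf of $\mathcal{P}$ affinely onto the $i$-th leaf of $\mathcal{Q}$. Start with $\mathcal{P}_0$ a $\tau$-adic refinement of the breakpoints of $g$; take $g(\mathcal{P}_0)$, whose endpoints lie in $\mathbf{Z}[\tau]/\mathbf{Z}$ by condition (3); refine to a $\tau$-adic $\mathcal{Q}_0$; pull back the new breakpoints of $\mathcal{Q}_0$ to refine $\mathcal{P}_0$; and iterate. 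The cyclic matching of leaves is induced by $g$, and the resulting tree diagram represents $g$ by construction.

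The main technical obstacle is the termination of this mutual refinement procedure. Each round adds $\mathbf{Z}[\tau]$ points to one side, which via the combinatorial fact can always be absorbed into a $\tau$-adic refinement, but one must argue that the process closes up after finitely many rounds; in practice this is done by bounding the necessary ``depth'' of the trees in terms of the maximum exponent appearing in the base-$\tau$ expansions of the (finitely many) breakpoints of $g$ and their $g$-images. Condition (3) of Definition \ref{Definition: main} plays an essential role throughout, as it is what guarantees that $g$-images of $\mathbf{Z}[\tau]$ points remain in $\mathbf{Z}[\tau]$; without it, a generic rotation would satisfy conditions (1) and (2) but produce a $g(\mathcal{P})$ with endpoints outside $\mathbf{Z}[\tau]/\mathbf{Z}$, ruling out any tree-diagram representation and explaining precisely why the description of $T_\tau$ in \cite{burilloT} must be supplemented by condition (3) to match the combinatorial model.
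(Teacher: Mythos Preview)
Your approach is correct in outline but takes a genuinely different route from the paper. The paper does not attempt to construct a tree diagram for an arbitrary element of $T_\tau$. Instead it uses an orbit--stabilizer reduction: writing $G$ for the tree-diagram group, it first checks $G \leq T_\tau$ on the explicit generators $\{x_n, y_n, c_n\}$ from \cite{burilloT} (your forward inclusion). For the reverse inclusion it observes that both $G$ and $T_\tau$ have $\mathbf{Z}[\tau]/\mathbf{Z}$ as the orbit of $0$ (rotations by angles in $\mathbf{Z}[\tau]$ belong to $G$), so any $f \in T_\tau$ can be post-multiplied by some $g \in G$ with $0 \cdot fg^{-1} = 0$, placing $fg^{-1}$ in the stabilizer $F_\tau$. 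The paper then simply invokes the known fact from \cite{burilloF} that the analytic and tree-diagram definitions of $F_\tau$ agree, so $fg^{-1} \in G$ and hence $f \in G$.

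Your direct construction is more self-contained but pays for it exactly where you say: the termination of the mutual refinement is the substantive content of the $F_\tau$ correspondence in \cite{cleary, burilloF}, and in the $\tau$-adic setting it is not as automatic as in the dyadic case (a partition with endpoints in $\mathbf{Z}[\tau]$ and interval lengths powers of $\tau$ need not a priori be a $\tau$-adic subdivision obtainable by iterated splits). The paper's reduction black-boxes precisely this difficulty by citing the interval result, which is why its proof is only a few lines. Your approach would give an independent proof of the $F_\tau$ case along the way, at the cost of having to make the Zeckendorf-expansion termination argument rigorous.
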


\begin{proof}
For the sake of this proof, we denote the group action defined in \cite[Section 2]{burilloT} by means of combinatorial tree diagrams as $G$, and we will show that it is the same as $T_{\tau}$ above. Note that the combinatorial tree diagrams defined in \cite{burilloT} readily translate into concrete piecewise linear homeomorphisms of $\mathbf{R/Z}$ and we will use the latter without giving all details. 

The elements of the generating set $\{x_n,y_n,c_n\}$ of $G$ (as provided in \cite[Section 2]{burilloT}) are easily seen to satisfy Definition \ref{Definition: main}, so it follows that $G\leq T_{\tau}$.
Now note that the orbit of $0$ in both group actions $G$ and $T_{\tau}$ is the same since it equals $\mathbf{Z}[\tau]/\mathbf{Z}$. So given an element $f\in T_{\tau}$, there is an element $g\in G$ such that $0\cdot fg^{-1}=0$. In particular, $fg^{-1}\in F_{\tau}$. 

The group action on $\mathbf{R/Z}$ corresponding to the tree diagrams defined in \cite{burilloF} representing elements of $F_{\tau}$ coincides with our definition of $F_{\tau}$, which is also the stabilizer of $0$ in $G$.
It follows that $F_{\tau}\leq G$ and thus $f\in G$. Therefore, $T_{\tau}\leq G$ and $G=T_{\tau}$.
\end{proof}

Our result will make use of the following structural results from \cite{burilloF, burilloT}. Let $F_\tau^c \leq F_\tau$ be the subgroup of elements $g$ for which $\overline{Supp(g)}\subset (0, 1)$.

\begin{proposition}[Burillo--Nucinkis--Reeves {\cite[Proposition 5.2]{burilloF}}]
\label{prop:index2F}
$F_\tau' = (F_\tau^c)'$ is an index-two subgroup of $F_\tau^c$.
\end{proposition}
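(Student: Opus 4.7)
The plan is to compute both the abelianization of $F_\tau$ and the abelianization of $F_\tau^c$ explicitly and then compare them. First, I would introduce the log-slope homomorphisms at the endpoints: for $g \in F_\tau$, set $\ell_0(g) := \log_\tau g'(0^+)$ and $\ell_1(g) := \log_\tau g'(1^-)$, both taking values in $\mathbf{Z}$ because every slope of $g$ is a power of $\tau$. Since an element of $F_\tau$ is piecewise linear with finitely many breakpoints and fixes $0$ and $1$, the conditions $\ell_0(g) = 0$ and $\ell_1(g) = 0$ force $g$ to coincide with the identity on some neighbourhood of each endpoint, so $F_\tau^c = \ker(\ell_0) \cap \ker(\ell_1)$. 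Because $\mathbf{Z}^2$ is abelian, this already yields $F_\tau' \subseteq F_\tau^c$, while $(F_\tau^c)' \subseteq F_\tau'$ is trivial.

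Next, I would construct a parity invariant $\chi : F_\tau \to \mathbf{Z}/2$ whose restriction to $F_\tau^c$ is still surjective. The natural candidate comes from the tree-diagram description of $F_\tau$ developed in \cite{burilloF}: one associates to each element the count of a specified caret type modulo $2$, or equivalently uses the relation $\tau^2 = 1 - \tau$ (which makes $\mathbf{Z}[\tau] = \mathbf{Z} \oplus \tau\mathbf{Z}$) to track a chosen coordinate of the breakpoints modulo $2$. Verifying that such an assignment descends to a well-defined homomorphism, invariant under caret expansions, is mechanical but needs care; once this is done, exhibiting a single boundedly supported element of odd parity shows that $\chi|_{F_\tau^c}$ surjects onto $\mathbf{Z}/2$.

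The crux is then to show $\ker(\chi|_{F_\tau^c}) = (F_\tau^c)'$; the inclusion $\supseteq$ is immediate because $\chi$ takes values in an abelian group. For the reverse inclusion I would imitate the classical argument that the compactly supported subgroup of Thompson's $F$ is perfect: given $g \in F_\tau^c$ with $\chi(g) = 0$, find $h \in F_\tau^c$ whose support is disjoint from that of $g$ and which conjugates a suitable fragment of $g$ into a disjoint region, and then apply the standard fragmentation-and-commutator trick to express $g$ as a product of commutators in $F_\tau^c$; the hypothesis $\chi(g) = 0$ is precisely what allows the decomposition to close up without leaving a residual $\mathbf{Z}/2$-obstruction. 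Combining the three steps yields the chain $(F_\tau^c)' \subseteq F_\tau' \subseteq \ker(\chi) \cap F_\tau^c = (F_\tau^c)'$, forcing every inclusion to be an equality, with $[F_\tau^c : F_\tau'] = 2$. The main obstacle is the combined task of pinpointing the correct parity homomorphism $\chi$ and then proving perfectness of its kernel inside $F_\tau^c$: this is the step that genuinely depends on the arithmetic of $\mathbf{Z}[\tau]$, and where the proof for $F_\tau$ diverges from the analogous statement for Thompson's $F$, in which case no parity is needed and $F' = F^c$.
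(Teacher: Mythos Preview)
The paper does not give its own proof of this proposition: it is quoted verbatim as a result of Burillo--Nucinkis--Reeves \cite[Proposition 5.2]{burilloF} and used as a black box. So there is nothing in the present paper to compare your argument against.

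That said, your outline is a reasonable sketch of how the cited result is actually established. The abelianization of $F_\tau$ is indeed $\mathbf{Z}^2 \times \mathbf{Z}/2$, with the $\mathbf{Z}^2$ factor coming from the germ (log-slope) homomorphisms at $0$ and $1$ and the $\mathbf{Z}/2$ factor from a caret-parity invariant arising from the tree-pair model; restricting to $F_\tau^c$ kills the $\mathbf{Z}^2$ part and leaves the $\mathbf{Z}/2$. Your identification $F_\tau^c = \ker(\ell_0,\ell_1)$ and the chain of inclusions $(F_\tau^c)' \subseteq F_\tau' \subseteq F_\tau^c$ are correct and straightforward.

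Where your write-up is thin is exactly where you flag it: the existence and well-definedness of the parity homomorphism $\chi$, and the proof that $\ker(\chi|_{F_\tau^c})$ is perfect. The first is handled in \cite{burilloF} via the explicit tree-diagram calculus (checking invariance under the basic move $\tau = \tau^2 + \tau^3$ on carets), and is not something one can wave through. For the second, the ``standard fragmentation-and-commutator trick'' you invoke does not by itself see the $\mathbf{Z}/2$ obstruction; one really needs to work with generators of $F_\tau^c$ adapted to the caret model and show that each even-parity generator word is a product of commutators. As written, your Step 3 is a plausible plan rather than a proof.
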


\begin{proposition}[Burillo--Nucinkis--Reeves {\cite[Theorem 3.2]{burilloT}}]
\label{prop:index2T}

$T_\tau'$ is an index-two subgroup of $T_\tau$.
\end{proposition}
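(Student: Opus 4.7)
The plan is to exhibit a surjective homomorphism $\phi : T_\tau \to \mathbf{Z}/2\mathbf{Z}$ whose kernel is exactly $T_\tau'$. The inclusion $T_\tau' \subseteq \ker(\phi)$ is automatic since $\mathbf{Z}/2\mathbf{Z}$ is abelian; the content lies in constructing $\phi$ and in showing that $\ker(\phi)$ is perfect. The starting point is Proposition \ref{prop:index2F}, which already provides the desired quotient at the level of $F_\tau^c$, and the goal is to upgrade that homomorphism to all of $T_\tau$.

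To build $\phi$, I would work with the tree-pair model of $T_\tau$ described in \cite{burilloT} and confirmed by Lemma \ref{lemma: same group}: every element is represented by a triple $(\mathcal{T}_-, \mathcal{T}_+, \sigma)$, where $\mathcal{T}_\pm$ are finite binary trees whose carets come in two flavors (reflecting the splitting $\tau + \tau^2 = 1$) and $\sigma$ is a cyclic matching of their leaves, with two triples representing the same element iff they differ by a sequence of expansion moves. Define $\phi(g)$ as a weighted parity of caret types (for example, the difference between the number of type-$2$ carets in $\mathcal{T}_-$ and in $\mathcal{T}_+$, reduced mod $2$). The identity $\tau = \tau^2 + \tau^3$ governing the expansion move forces each expansion to add the same net count of each caret type to both trees, so this quantity descends to the quotient by expansions. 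Multiplicativity under composition is then checked on a common refinement of two given tree-pairs.

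Surjectivity of $\phi$ is witnessed by any lift to $T_\tau$ of a representative of the nontrivial class in $F_\tau^c / F_\tau'$, whose existence is guaranteed by Proposition \ref{prop:index2F}. For the harder inclusion $\ker(\phi) \subseteq T_\tau'$, the plan is to show that $\ker(\phi)$ is perfect. Given $g \in \ker(\phi)$, one uses the transitive action of $T_\tau$ on $\mathbf{Z}[\tau]/\mathbf{Z}$ to conjugate $g$ to an element fixing a chosen point $x$; the stabilizer of $x$ is naturally isomorphic to $F_\tau$, so the conjugated element lies in $F_\tau$, and by our normalization in $F_\tau \cap \ker(\phi)$. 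Proposition \ref{prop:index2F} identifies this intersection (up to suitable adjustments coming from the slope-at-endpoints homomorphisms, which in $T_\tau$ can be absorbed by further conjugations by rotations) with the perfect group $F_\tau' = (F_\tau^c)'$. A disjoint-support doubling trick in the style of the usual proofs for Thompson's $T$ then expresses $g$ as a product of commutators inside $T_\tau$.

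The main obstacle is the first step: verifying that the combinatorial definition of $\phi$ is genuinely invariant under the expansion relation and descends to a homomorphism under composition of tree-pairs. This is a purely combinatorial verification, but it is precisely the point where the arithmetic of the golden ratio enters, via $\tau + \tau^2 = 1$, and without this identity an analogous $\phi$ would not exist (consistent with Thompson's $T$ being simple). Once $\phi$ is in hand, the perfection argument for $\ker(\phi)$ is essentially bookkeeping combined with Proposition \ref{prop:index2F} and the rich supply of rotations in $T_\tau$.
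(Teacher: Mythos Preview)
The paper does not prove this proposition: it is quoted as \cite[Theorem 3.2]{burilloT} and used as a black box, so there is no ``paper's own proof'' to compare your proposal against. Your sketch is in the spirit of the original argument of Burillo--Nucinkis--Reeves, which does construct a parity homomorphism from the tree-pair model and identifies its kernel with $T_\tau'$.

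That said, the second half of your outline is shakier than you indicate. Once you conjugate $g \in \ker(\phi)$ so that it fixes a point $x$, it lies in $\mathrm{Stab}(x) \cong F_\tau$, but $F_\tau$ has infinite abelianization (coming from the slope homomorphisms at the two germs of $x$), so $F_\tau \cap \ker(\phi)$ is much larger than $F_\tau'$. Your parenthetical ``absorb the slope-at-endpoints homomorphisms by further conjugations by rotations'' is doing real work here and is not obviously correct: conjugation by a rotation moves the fixed point and does not directly kill the germ data. The honest route (and the one taken in \cite{burilloT}) is to show that $\ker(\phi)$ is generated by elements that visibly lie in $T_\tau'$ (e.g.\ the squares of generators together with suitable commutators), or equivalently to compute the abelianization of $T_\tau$ directly from a presentation and see that it is $\mathbf{Z}/2\mathbf{Z}$. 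Your ``disjoint-support doubling trick'' would then be unnecessary.
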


The main result of this section is that the full lift $\overline{T_\tau}$ of $T_\tau$ to the real line has elements with stable commutator length equal to any $x \in \frac{1}{2} \mathbf{Z}[\tau]$, proving Theorem \ref{thm:scl}.

\subsection{Transitivity properties}

In this subsection we collect some useful facts about the dynamics of the action of $T_\tau$ on $\mathbf{R}/\mathbf{Z}$.
We start by recording a few simple facts about the transitivity properties of $F_\tau$.
For every $a, b \in \mathbf{Z}[\tau] \cap [0, 1]$, let $F_\tau[a, b]$ be the set of elements whose support is contained in $[a, b]$.
It is shown in \cite[Proposition 6.2]{burilloF} that $F_\tau[a, b]$ is naturally isomorphic to $F_\tau$. (In fact, the two actions are topologically conjugate: the conjugating map is a carefully chosen piecewise linear map $[a, b] \to [0, 1]$).

\begin{lemma}
\label{lem:transF}
The action of $F_\tau$ on the set of ordered $n$-tuples $0 < x_1 < \cdots < x_n < 1$ in $\mathbf{Z}[\tau]\cap [0,1]$ is transitive, and the stabilizer of each such $n$-tuple is isomorphic to $F_\tau^{n+1}$. 
\end{lemma}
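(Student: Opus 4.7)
The plan is to proceed by induction on $n$. The key structural input is the fact, recorded in the discussion preceding the lemma, that for every $a < b$ in $\mathbf{Z}[\tau] \cap [0, 1]$ the subgroup $F_\tau[a, b]$ of elements supported in $[a, b]$ is topologically conjugate to $F_\tau$ on $[0, 1]$ via a specific piecewise linear map, by \cite[Proposition 6.2]{burilloF}. In particular, any transitivity property of the action of $F_\tau$ on $\mathbf{Z}[\tau] \cap [0, 1]$ is automatically inherited by $F_\tau[a, b]$ acting on $\mathbf{Z}[\tau] \cap [a, b]$.

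For transitivity, the base case $n = 1$ reduces to showing that $F_\tau$ acts transitively on $\mathbf{Z}[\tau] \cap (0, 1)$. I would establish this directly from the tree-diagram calculus of \cite{burilloF}: every point of $\mathbf{Z}[\tau] \cap (0, 1)$ arises as an internal breakpoint of some standard $\tau$-adic subdivision of $[0, 1]$, and two such subdivisions with the same combinatorial type are matched by an element of $F_\tau$; this is the precise analogue of the standard dyadic picture for Thompson's group $F$. For the inductive step, given two tuples $(x_1, \ldots, x_n)$ and $(y_1, \ldots, y_n)$, I first pick $g \in F_\tau$ with $x_1 \cdot g = y_1$ via the base case. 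Since $F_\tau$ preserves $\mathbf{Z}[\tau]$, the points $x_2 \cdot g < \cdots < x_n \cdot g$ lie in $\mathbf{Z}[\tau] \cap (y_1, 1)$, and applying the induction hypothesis inside $F_\tau[y_1, 1] \cong F_\tau$ moves this $(n-1)$-tuple onto $(y_2, \ldots, y_n)$ while fixing $y_1$.

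For the stabilizer, set $x_0 := 0$ and $x_{n+1} := 1$, and note that every element of $F_\tau$ fixes both $0$ and $1$. Any $g$ stabilizing the tuple therefore fixes $x_0, \ldots, x_{n+1}$ and restricts on each sub-interval $[x_{i-1}, x_i]$ to a piecewise linear homeomorphism of that sub-interval with breakpoints in $\mathbf{Z}[\tau]$ and slopes in powers of $\tau$; extended by the identity outside, this restriction is an element of $F_\tau[x_{i-1}, x_i]$. Conversely, any choice of $n+1$ such elements glues into a well-defined element of $F_\tau$ since the gluing points are fixed by every factor. Combined with the isomorphisms $F_\tau[x_{i-1}, x_i] \cong F_\tau$, this gives a canonical identification of the stabilizer with $F_\tau^{n+1}$.

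The main obstacle is the base case of transitivity: once $F_\tau$ is known to act transitively on $\mathbf{Z}[\tau] \cap (0, 1)$, both the induction and the stabilizer computation are formal consequences of the identification $F_\tau[a, b] \cong F_\tau$. The base case is not completely transparent because $\mathbf{Z}[\tau]$ does not admit as clean a combinatorial description as the dyadic rationals, but it should be extractable directly from the tree-diagram formalism of \cite{burilloF}, where the standard $\tau$-adic subdivisions play the role that standard dyadic subdivisions play for Thompson's group $F$.
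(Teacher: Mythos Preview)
Your proposal is correct and follows essentially the same approach as the paper: the stabilizer description via the splitting $F_\tau[0,x_1]\times\cdots\times F_\tau[x_n,1]$ is identical, and the inductive scheme for high transitivity using $F_\tau[a,b]\cong F_\tau$ is the same. The only difference is the base case: the paper simply cites \cite[Corollary 1]{cleary} for double transitivity rather than extracting single transitivity from the tree-diagram calculus of \cite{burilloF}, so you can replace your ``main obstacle'' paragraph with that citation.
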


\begin{proof}
  Since for each $a,b\in \mathbf{Z}[\tau] \cap [0, 1]$, $F_\tau[a, b]$ is naturally isomorphic to $F_\tau$, this implies the statement about the stabilizers. Indeed, by the self similarity feature of the definition, the stabilizer of $\{x_1, \ldots, x_n\}$ splits as a direct product $F_\tau[0, x_1] \times \cdots \times F_\tau[x_n, 1]$.

Double transitivity is proven in \cite[Corollary 1]{cleary}. Finally, high transitivity follows from an elementary inductive argument using transitivity and the previous description of the stabilizers.
\end{proof}

\begin{lemma}\label{lem:nTransPrime}
For each $n\in \mathbf{N}$, the action of $F_{\tau}'$ on ordered 
$n$-tuples in $\mathbf{Z}[\tau] \cap (0,1)$ is transitive. 
\end{lemma}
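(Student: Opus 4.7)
The plan is to bootstrap from Lemma \ref{lem:transF} using the index-two structure from Proposition \ref{prop:index2F}. Given ordered $n$-tuples $\mathbf{x} = (x_1, \ldots, x_n)$ and $\mathbf{y} = (y_1, \ldots, y_n)$ in $\mathbf{Z}[\tau] \cap (0, 1)$, I would first pick $a, b \in \mathbf{Z}[\tau]$ with $0 < a < \min(x_1, y_1)$ and $\max(x_n, y_n) < b < 1$, and apply Lemma \ref{lem:transF} inside the natural copy $F_\tau[a, b] \cong F_\tau$ from \cite[Proposition 6.2]{burilloF} to obtain $\tilde{g} \in F_\tau[a, b]$ with $\mathbf{x} \tilde{g} = \mathbf{y}$. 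Since $\tilde{g}$ is supported in $[a, b] \subset (0, 1)$, automatically $\tilde{g} \in F_\tau^c$.

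If $\tilde{g} \in F_\tau'$, we are done. Otherwise, by Proposition \ref{prop:index2F} the element $\tilde{g}$ represents the nontrivial coset in $F_\tau^c / F_\tau' \cong \mathbf{Z}/2\mathbf{Z}$, and the idea is to correct $\tilde{g}$ by right-multiplying with an element $s \in F_\tau^c \setminus F_\tau'$ that fixes every $y_i$: then $\tilde{g} s$ still sends $\mathbf{x}$ to $\mathbf{y}$, and since the two nontrivial classes in $\mathbf{Z}/2\mathbf{Z}$ cancel, $\tilde{g} s \in F_\tau'$.

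To produce such an $s$, I would start from an arbitrary element $s_0 \in F_\tau^c \setminus F_\tau'$, which exists by Proposition \ref{prop:index2F}; its closed support lies inside some $[a_0, b_0]$ with $a_0, b_0 \in \mathbf{Z}[\tau] \cap (0, 1)$. Since $y_n < 1$, pick $a', b' \in \mathbf{Z}[\tau]$ with $y_n < a' < b' < 1$, making $[a', b']$ disjoint from $\{y_1, \ldots, y_n\}$; then use the $2$-transitivity from Lemma \ref{lem:transF} to find $h \in F_\tau$ with $h(a_0) = a'$ and $h(b_0) = b'$. The conjugate $s := h^{-1} s_0 h$ has support contained in $[a', b']$ and hence fixes $\mathbf{y}$, while the normality of $F_\tau'$ in $F_\tau$ guarantees that $s$ remains in $F_\tau^c \setminus F_\tau'$, as required. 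The only subtle point I anticipate is arranging the sign-correcting element to fix the prescribed tuple, which this conjugation trick handles cleanly.
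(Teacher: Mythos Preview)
Your argument is correct, and it takes a genuinely different route from the paper's. Both proofs begin the same way: produce an element of $F_\tau^c$ supported in a subinterval $[a,b]\subset(0,1)$ sending $\mathbf{x}$ to $\mathbf{y}$ (the paper does this by first using $(n+2)$-transitivity to fix $a,b$ and then stripping off the behaviour outside $[a,b]$; you do it more directly via the isomorphism $F_\tau[a,b]\cong F_\tau$). The divergence is in how one lands in $F_\tau'$. The paper uses a commutator trick: it conjugates $f$ by an element $l$ pushing $[a,b]$ past $b$, so that $[l,f]=l^{-1}f^{-1}l\cdot f$ still sends $x_i$ to $y_i$ and is visibly a commutator. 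Your approach instead invokes Proposition~\ref{prop:index2F} to view $F_\tau^c/F_\tau'\cong\mathbf{Z}/2\mathbf{Z}$ and corrects the parity by multiplying with a conjugate of a fixed odd element supported away from the $y_i$. The paper's method is slightly more self-contained in that it does not rely on the index-two result from \cite{burilloF}; yours is perhaps more transparent conceptually, and makes explicit where the index-two structure enters. One small point worth stating explicitly is that the conjugating PL map realising $F_\tau[a,b]\cong F_\tau$ carries $\mathbf{Z}[\tau]\cap(a,b)$ onto $\mathbf{Z}[\tau]\cap(0,1)$, which is what lets you transport Lemma~\ref{lem:transF} into the subinterval.
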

\begin{proof}
Let $0 < x_1 < \cdots < x_n < 1$ and $0 < y_1 < \cdots < y_n < 1$.
We choose $a \in (0, \min\{x_1, y_1\}) \cap \mathbf{Z}[\tau]$ and $b \in (\max\{x_n, y_n\}, 1) \cap \mathbf{Z}[\tau]$.

By Lemma \ref{lem:transF}, there exists an element $g \in F_\tau$ fixing $a$ and $b$ and sending $x_i$ to $y_i$ for $i = 1, \ldots, n$. Let $h \in F_\tau$ be supported on $[0, a] \cup [b, 1]$ so that
$$g \mid [0, a] \cup [b, 1] = h \mid [0, a] \cup [b, 1].$$
Then $f := gh^{-1} \in F_\tau^c$ fixes $[0, a] \cup [b, 1]$ pointwise and sends $x_i$ to $y_i$, for $i = 1, \ldots, n$.

Using proximality, we find an element $l \in F_\tau$ such that $[a, b]\cdot l\subset (b, 1)$. Then $l^{-1}f^{-1}l$ is supported on $[b, 1]$, and in particular it fixes every $x_i$. Therefore $[l, f] \in F_\tau'$ is the desired element sending $x_i$ to $y_i$ for all $i = 1, \ldots, n$.
\end{proof}

Let us move to $T_\tau$ and prove the analogous properties.

\begin{lemma}
\label{lem:transT}
The action of $T_\tau$ on the set of circularly ordered $n$-tuples in $\mathbf{Z}[\tau]/\mathbf{Z}$ is transitive, and the stabilizer is isomorphic to $F_\tau^n$.
\end{lemma}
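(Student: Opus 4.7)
The plan is to bootstrap from the analogous statement for $F_\tau$ (Lemma \ref{lem:transF}) by exploiting the fact, observed in the proof of Lemma \ref{lemma: same group}, that the $T_\tau$-orbit of $0$ equals $\mathbf{Z}[\tau]/\mathbf{Z}$, and that the stabilizer of $0$ in $T_\tau$ is naturally isomorphic to $F_\tau$ (as noted right after Definition \ref{Definition: main}).

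For transitivity, given two circularly ordered $n$-tuples $(x_1, \ldots, x_n)$ and $(y_1, \ldots, y_n)$ in $\mathbf{Z}[\tau]/\mathbf{Z}$, I would first use the transitivity of $T_\tau$ on $\mathbf{Z}[\tau]/\mathbf{Z}$ to find $g \in T_\tau$ sending $x_1$ to $y_1$; after replacing the first tuple by its image, we may assume $x_1 = y_1$. Cutting the circle at this common point, we identify $(\mathbf{R}/\mathbf{Z}) \setminus \{x_1\}$ with the open interval $(0,1)$ in such a way that the circular order becomes the linear order and the set $\mathbf{Z}[\tau]/\mathbf{Z}$ matches (up to a translation by $x_1$, which preserves $\mathbf{Z}[\tau]$) with $\mathbf{Z}[\tau] \cap (0,1)$. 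The stabilizer of $x_1$ in $T_\tau$ acts on this interval as $F_\tau$, and Lemma \ref{lem:transF} then supplies an element sending $(x_2, \ldots, x_n)$ to $(y_2, \ldots, y_n)$.

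For the stabilizer, any element of $T_\tau$ fixing a circularly ordered $n$-tuple must fix each point individually (since it preserves the circular order). Hence the stabilizer of $(x_1, \ldots, x_n)$ equals the stabilizer of $x_1$ in $T_\tau$ intersected with the pointwise stabilizer of $\{x_2, \ldots, x_n\}$. The first is $F_\tau$ acting on the complementary interval, and by the description of stabilizers in Lemma \ref{lem:transF}, the pointwise stabilizer of $n-1$ points inside $F_\tau$ splits as a direct product of $n$ copies of $F_\tau$, one for each of the $n$ subintervals determined by the $n-1$ points inside $(0,1)$. This yields the claimed isomorphism with $F_\tau^n$.

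No step looks genuinely hard: everything has been set up carefully in Lemma \ref{lem:transF}, and the only minor issue is verifying that cutting the circle at a point of $\mathbf{Z}[\tau]/\mathbf{Z}$ identifies the $T_\tau$-stabilizer with $F_\tau$ acting on $[0,1]$ in the standard way; this is immediate from Definition \ref{Definition: main} combined with invariance of $\mathbf{Z}[\tau]$ under integer translations.
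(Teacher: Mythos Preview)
Your proposal is correct and follows essentially the same approach as the paper: reduce to transitivity on single points (the paper invokes the rotations in $T_\tau$ directly, you cite the orbit computation from Lemma~\ref{lemma: same group}, which amounts to the same thing), then identify the point stabilizer with $F_\tau$ and invoke Lemma~\ref{lem:transF} for both the higher transitivity and the description of stabilizers. The paper's proof is terser but the logical structure is identical.
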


\begin{proof}
Since $T_\tau$ contains all rotations by an angle in $\mathbf{Z}[\tau]/\mathbf{Z}$, the action on $\mathbf{Z}[\tau]/\mathbf{Z}$ is transitive. Moreover, the stabilizer of each point $x\in \mathbf{Z}[\tau]/\mathbf{Z}$ is naturally isomorphic to $F_\tau$, so high transitivity now follows from Lemma \ref{lem:transF}, as does the statement on the stabilizers.
\end{proof}

These transitivity results are enough to prove that $T_\tau$ is of type $F_\infty$. 
Recall that a group $G$ is said to be \emph{of type $F_n$} for an integer $n \geq 1$ if it admits a classifying space with compact $n$-skeleton, and \emph{of type $F_\infty$} if it is of type $F_n$ for all $n \geq 1$. Being of type $F_1$ is equivalent to being finitely generated, and being of type $F_2$ is equivalent to being finitely presented. We refer the reader to \cite{brown_book, brown_fp} and \cite{alonso} for more details.
Now we show the following:

\begin{proposition}
\label{prop:Tfin}
The group $T_\tau$ is of type $F_\infty$.
\end{proposition}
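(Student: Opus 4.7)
The plan is to apply Brown's criterion \cite{brown_fp} to a cocompact cellular action of $T_\tau$ on a suitable contractible CW complex, following the Stein--Farley--Bux template used to prove that Thompson's group $T$ is of type $F_\infty$. Concretely, I would build a complex $X$ whose vertices are (equivalence classes of) $\tau$-adic ``cyclic tree pair diagrams'' of the kind used to describe $T_\tau$ combinatorially in \cite{burilloT}, and whose higher cells record common refinements of such diagrams. The poset of common refinements is directed (any two $\tau$-adic subdivisions of $\mathbf{R}/\mathbf{Z}$ admit a common refinement), so the geometric realization $X$ is contractible.

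Next, I would identify the cell stabilizers. The action of $T_\tau$ on vertices is, by construction, the action on cyclically ordered $\tau$-adic subdivisions of $\mathbf{R}/\mathbf{Z}$. Using Lemma \ref{lem:transT}, the stabilizer of an $n$-tuple in $\mathbf{Z}[\tau]/\mathbf{Z}$ is isomorphic to $F_\tau^n$, and the same self-similarity shows that the stabilizer of any cell of $X$ is isomorphic to a finite direct product of copies of $F_\tau$. Since $F_\tau$ is of type $F_\infty$ by \cite{burilloF}, every cell stabilizer is of type $F_\infty$.

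The remaining input to Brown's criterion is a $T_\tau$-invariant filtration $X_0 \subseteq X_1 \subseteq \cdots$ by cocompact subcomplexes whose descending links become arbitrarily highly connected. The natural Morse function is the ``number of leaves'' of the underlying tree pair, and the corresponding descending links are, up to a standard homotopy equivalence, matching-type complexes on the admissible subdivisions of the circle. The main obstacle, and the only step that is not formal, is the high connectivity of these descending links. Here I would argue that the local combinatorics of $\tau$-adic refinement is governed by a fixed finite list of replacement rules, entirely analogous to the dyadic case; consequently the Morse-theoretic connectivity arguments of Brown, Stein, and Bux et al.\ for classical Thompson groups adapt verbatim, with $\tau$ playing the role of $\tfrac12$. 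Feeding this connectivity estimate and the type $F_\infty$ stabilizers into Brown's criterion then yields that $T_\tau$ is of type $F_\infty$.
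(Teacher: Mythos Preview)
Your proposal follows the Stein--Farley--Bux route, which is plausible but considerably heavier than what the paper actually does, and the one step you flag as ``not formal'' is exactly where the work would lie. The $\tau$-adic subdivision combinatorics are \emph{not} literally those of the dyadic case: the relation $\tau^2+\tau=1$ means that different forests can represent the same subdivision, which is precisely why $F_\tau$ and $T_\tau$ have nontrivial abelianization. So ``adapt verbatim'' is an overstatement; one would have to redo the descending-link connectivity analysis in the spirit of Cleary's argument for $F_\tau$ (incidentally, the $F_\infty$ property for $F_\tau$ is due to Cleary \cite{cleary}, not \cite{burilloF}). This can presumably be done, but you have not done it, and it is the entire content of your proof.

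The paper bypasses all of this with a much more elementary construction. It takes $X$ to be the full simplex on the countable set $\mathbf{Z}[\tau]/\mathbf{Z}$: an $n$-simplex is just an $(n+1)$-tuple of points, with face relation given by inclusion. This complex is trivially contractible. Lemma~\ref{lem:transT} gives transitivity on circularly ordered tuples, hence finitely many orbits of simplices in each dimension, and identifies each simplex stabilizer with a finite power of $F_\tau$. Since $F_\tau$ is of type $F_\infty$ (Cleary), Brown's criterion \cite[Proposition 1.1]{brown_fp} applies directly. No filtration, no Morse function, no link-connectivity estimate is needed: the hard geometric input has already been absorbed into the $F_\infty$ result for $F_\tau$, and the passage from $F_\tau$ to $T_\tau$ is essentially formal once one has the transitivity lemma.
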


\begin{proof}[Proof]
In \cite{cleary}, Cleary showed that $F_\tau$ is of type $F_\infty$. We will use this as a base case to deduce the result.

We construct a simplicial complex $X$, endowed with an action of $T_{\tau}$, as follows: the $n$-simplices are given by $(n+1)$-tuples $(x_0, \ldots, x_n)$ of elements in $\mathbf{Z}[\tau]/\mathbf{Z}$, and the face relation is given by inclusion. This is clearly contractible, and endowed with a natural action of $T_\tau$. We claim that the action is cocompact on simplices of every given dimension, and that the stabilizer of a simplex is of type $F_\infty$.

Recall from Lemma \ref{lem:transT} that $T_\tau$ acts transitively on the set of circularly ordered $n$-tuples in $\mathbf{Z}[\tau]/\mathbf{Z}$. It follows that the action on the $n$-simplices of $X$ has finitely many orbits, and thus it is cocompact in each dimension.
By Lemma \ref{lem:transT} again, the stabilizer of an $n$-simplex is isomorphic to $F_\tau^{n+1}$. In particular it is of type $F_\infty$ since this is a property closed under finite direct sums.

It follows from a special case of Brown's finiteness criterion \cite[Proposition 1.1]{brown_fp} that the group $T_{\tau}$ is of type $F_{\infty}$.
\end{proof}

\subsection{The lift of $T_\tau$}

We will now use the previous results to study the total lift $G = \overline{T_\tau}$ of $T_\tau$, and prove Theorem \ref{thm:scl}. We start by showing that it also has an abelianization of order $2$.

\begin{lemma}\label{lemma: lift}
Let $G$ be the total lift of $T_\tau$. Then $G$ is the unique lift of $T_\tau$ to the real line. The same holds for the total lift $H$ of $T_\tau'$. 
Moreover, it holds that $G'=H$ and $[G:G']=2$.
\end{lemma}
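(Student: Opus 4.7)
The plan is to reduce all four assertions to the single Key Claim that $\overline{T_\tau'}$ is perfect, i.e., that the integer translation by $1$ lies in $(\overline{T_\tau'})'$. This is the main obstacle and amounts to primitivity of the integer Euler class of the standard action of $T_\tau'$ on the circle. The approach is to exploit the golden-ratio identity $\tau+\tau^{2}=1$: the natural lifts of the rotations by $\tau$ and by $\tau^{2}=1-\tau$ multiply to the integer translation by $1$ in $\overline{T_\tau}$. Using Proposition \ref{prop:Tus}, one writes appropriate powers of these rotations explicitly as products of commutators in $T_\tau'$; lifting commutator-wise then gives elements of $(\overline{T_\tau'})'$ whose combined product is an integer translation whose precise value can be controlled by varying the commutator decompositions, eventually yielding the integer translation by $1$. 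Equivalently, this is the primitivity of the integer Euler class, which can be proven directly at the cycle level following the classical template for Thompson's group $T$.

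Granting the Key Claim, the remaining statements follow by formal arguments. For $G'=H$: the inclusion $G'\subseteq H$ is immediate, since any commutator in $G=\overline{T_\tau}$ projects under $\eta$ into $[T_\tau,T_\tau]=T_\tau'$, whence $G'\subseteq \eta^{-1}(T_\tau')=H$. The reverse inclusion reads $H=(\overline{T_\tau'})'\subseteq G'$, combining the Key Claim with the fact that $\overline{T_\tau'}\leq G$. The index assertion $[G:G']=2$ then follows at once from $G/G'=G/H\cong T_\tau/T_\tau'$ together with Proposition \ref{prop:index2T}.

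For uniqueness of lifts, the critical observation is that commutators in any lift depend only on the images in the quotient: for any two lifts $\tilde a_1,\tilde a_2\in \overline{T_\tau}$ of the same element of $T_\tau$, they differ by a central integer, so their commutators with any element coincide. Hence for any lift $L$ of $T_\tau$ (which is automatically contained in $G=\eta^{-1}(T_\tau)$) we obtain $L'=G'=H$; combined with $\eta(L)=T_\tau$ and $[G:H]=2$, this forces $L=G$. Analogously, any lift $M$ of $T_\tau'$ satisfies $M\leq H$ and $M'=H'=H$ by the Key Claim, forcing $M=H$. The hard part is thus entirely concentrated in establishing the Key Claim; everything else is a matter of chasing central extensions.
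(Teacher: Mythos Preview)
Your reduction of the lemma to the Key Claim that $H=\overline{T_\tau'}$ is perfect is correct: once $H'=H$, the identities $G'=H$ and $[G:G']=2$ and the uniqueness of both lifts follow exactly as you describe, using only that in a central extension commutators depend solely on the images in the quotient.

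The genuine gap is in the Key Claim itself. Your sketch via $\tau+\tau^{2}=1$ produces only \emph{some} integer translation in $H'$, not the translation by~$1$. Writing the rotations (or their squares) as products of commutators in $T_\tau'$ and lifting commutator-wise yields specific lifts $t\mapsto t+\tau+n$ and $t\mapsto t+\tau^{2}+m$; their product is $t\mapsto t+1+n+m$ with $n,m$ uncontrolled. The assertion that one can ``control'' these integers by varying the decomposition is circular: two commutator decompositions of the same element give lifts that differ by an element of $H'\cap\mathbf{Z}$, so this freedom cannot reach $T_{1}$ unless you already know $H'\cap\mathbf{Z}=\mathbf{Z}$. (Note also that on the circle $r_{\tau^{2}}=r_{\tau}^{-1}$, so the identity $\tau+\tau^{2}=1$ becomes the tautology $\tilde r_\tau\cdot T_{1}\tilde r_\tau^{-1}=T_{1}$ and carries no information.) The appeal to ``the classical template for Thompson's group $T$'' is not justified either: for $T$ one typically uses its torsion (finite-order rotations) or the explicit computation of $H^{2}(T;\mathbf{Z})$, and neither ingredient is available for the torsion-free group $T_\tau'$.

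The paper proceeds by a completely different, local argument that bypasses the Euler-class question. Given an arbitrary lift $K$ of $T_\tau'$ and a point $x\in\mathbf{Z}[\tau]/\mathbf{Z}$, lifts in $K$ of elements of $T_\tau(x)'$ preserve $x+\mathbf{Z}$ setwise, hence act on it by a fixed integer translation; taking commutators kills this translation, so commutators of such lifts fix $x+\mathbf{Z}$ \emph{pointwise}. Since $T_\tau(x)'\cong F_\tau'$ is perfect (Lemma~\ref{lem:Fus}), this forces $K$ to contain the canonical lift $K_{x}$ of $T_\tau(x)'$ that fixes $x+\mathbf{Z}$ pointwise, for every $x$. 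One then observes that the group generated by all the $K_{x}$ already contains the unit translation --- concretely, concatenating a few locally supported moves in distinct $K_{x}$'s pushes a chosen lifted basepoint once around. Thus every lift $K$ contains $\mathbf{Z}$ and hence equals $H$; the remaining statements follow as in your reduction.
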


\begin{proof}
We will show the first statement for $H$, a similar argument applies to $G$.
Consider an arbitrary lift $K\leq \overline{\Homeo^+(\mathbf{R/Z})}$ of $T_{\tau}'$.
To show that it is equal to $H$, it suffices to show that $K$ must contain all integer translations.

Recall the map $\eta:\overline{\Homeo^+(\mathbf{R/Z})}\to \Homeo^+(\mathbf{R/Z})$.
Let $x\in \mathbf{Z}[\tau]/\mathbf{Z}$, and let $T_\tau(x)$ denote the subgroup of elements of $T_\tau$ that fix pointwise a neighbourhood of $x$, so $T_\tau(x) \cong F_\tau^c$ and $T_\tau(x)' \cong F_\tau'$ (see Proposition \ref{prop:index2F}).
Let $f_1,f_2\in T_{\tau}(x)'$ and let $\lambda_1,\lambda_2\in K$ be elements such that $\eta(\lambda_i)=f_i$. Since each $f_i$ fixes $x$, each $\lambda_i$ preserves $x+\mathbf{Z}$ in $\mathbf{R}$.
It follows that the element $\lambda=[\lambda_1,\lambda_2]$ fixes $x+\mathbf{Z}$ pointwise, and 
$\eta(\lambda)=[f_1,f_2]$. Since $T_{\tau}(x)'$ is perfect (see e.g. \cite{prox}), so generated by commutators of elements in $T_\tau(x)'$, we conclude that the group $K$ contains the lift $K_x$ of $T_{\tau}(x)'$ that fixes $x+\mathbf{Z}$ pointwise. This holds for each $x\in \mathbf{Z}[\tau]/\mathbf{Z}$. 
It is an elementary exercise to show that the group generated by the groups $\{K_x\mid x\in \mathbf{Z}[\tau]/\mathbf{Z}\}$ contains all integer translations.

Since $G'$ and $H$ are both lifts of $T_{\tau}'$, by the previous paragraph they coincide. Now $H$ is the preimage under $\eta$ of an index-two subgroup of $T_\tau$, so it has index $2$ in $G = \eta^{-1}(T_\tau)$.
\end{proof}

We are finally ready to prove Theorem \ref{thm:scl}. More precisely:

\begin{theorem}\label{thm:scl2}
For every $x \in \frac{1}{2}\mathbf{Z}[\tau]$, there exists $g \in G$ such that $\scl(g) = x$. Moreover, if $x \in \mathbf{Z}[\tau]$, we may choose $g \in G'$.
\end{theorem}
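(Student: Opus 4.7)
The plan is to reduce the theorem to Corollary \ref{cor:sclrot} and then realize the prescribed scl values via translations on the real line.

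First I would verify that $T_\tau$ satisfies the hypotheses of Corollary \ref{cor:sclrot} with the orbit $S := \mathbf{Z}[\tau]/\mathbf{Z}$. The action on circularly ordered $n$-tuples in $S$ is transitive for all $n \geq 1$ by Lemma \ref{lem:transT}, and $S$ is dense in $\mathbf{R}/\mathbf{Z}$ since $\tau$ is irrational. Moreover, the same lemma shows that stabilizers of $k$-tuples in $S$ are isomorphic to direct products of copies of $F_\tau$; since $F_\tau$ is a group of piecewise linear homeomorphisms of the interval, Theorem \ref{thm:PLPP} (together with the amenable-extension statement, or direct products) yields that these stabilizers have vanishing second bounded cohomology. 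Corollary \ref{cor:sclrot} then delivers the key identity
\[ \scl(g) = \frac{\rot(g)}{2} \qquad \text{for every } g \in G'. \]

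Next I would promote this identity to all of $G$. By Lemma \ref{lemma: lift} we have $[G:G']=2$, so every $g \in G$ satisfies $g^2 \in G'$. Using homogeneity of $\rot$ together with the definition $\scl(g) := \scl(g^k)/k$ for $g^k \in G'$, the formula above extends to
\[ \scl(g) = \frac{\scl(g^2)}{2} = \frac{\rot(g^2)}{4} = \frac{\rot(g)}{2} \qquad \text{for every } g \in G. \]

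Finally, I would realize the prescribed values. For any $\alpha \in \mathbf{Z}[\tau]$, the rotation by $\alpha \bmod \mathbf{Z}$ lies in $T_\tau$ (it is a breakpoint-free homeomorphism preserving $\mathbf{Z}[\tau]/\mathbf{Z}$), so its total lift contains the translation $t_\alpha : x \mapsto x+\alpha$; clearly $t_\alpha \in G$ and $\rot(t_\alpha) = \alpha$. Given $x \in \tfrac12 \mathbf{Z}[\tau]$, setting $g := t_{2x} \in G$ gives $\scl(g) = \rot(g)/2 = x$. If moreover $x \in \mathbf{Z}[\tau]$, then $g = t_{2x} = (t_x)^2$ is a square in $G$, hence lies in $G'$ (since $G/G' \cong \mathbf{Z}/2$), proving the stronger statement.

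The main obstacle is really just checking that Corollary \ref{cor:sclrot} applies cleanly, as essentially all the hard work (vanishing of $H^2_b$ for the stabilizers, transitivity in the dense orbit, and identification of the defect of $\rot$) has already been packaged into the earlier results. Once Corollary \ref{cor:sclrot} is in hand, the only remaining point requiring care is the $G/G'$ calculation to ensure that the translations $t_{2x}$ sit inside the commutator subgroup when $x \in \mathbf{Z}[\tau]$, and this is immediate from Lemma \ref{lemma: lift}.
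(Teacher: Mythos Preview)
Your proposal is correct and follows essentially the same route as the paper: verify that $T_\tau$ satisfies the hypotheses of Corollary \ref{cor:sclrot} via Lemma \ref{lem:transT} (plus Theorem \ref{thm:PLPP} for the stabilizers), deduce $\scl(g)=\rot(g)/2$, and realize the desired values using translations $t_\alpha$ together with the fact that $[G:G']=2$ from Lemma \ref{lemma: lift}. The only cosmetic difference is that the paper applies the formula directly to $f^2\in G'$ rather than first extending $\scl=\rot/2$ to all of $G$, and it records the $F_\infty$ property of $G$ (needed for Theorem \ref{thm:scl}) which is outside the statement you were asked to prove.
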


\begin{proof}[Proof]
The group $T_\tau$ is of type $F_\infty$ by Proposition \ref{prop:Tfin}. Being a central extension of $T_\tau$ by $\mathbf{Z}$, also $G$ is of type $F_\infty$. Moreover, by Lemma \ref{lem:transT} and Corollary \ref{cor:sclrot}, for every $g \in G'$ it holds $\scl(g) = \rot(g)/2$.

Now let $0 < \alpha \in \mathbf{Z}[\tau]$. The translation $f: t \mapsto t + \alpha$ on $\mathbf{R}$ descends to a rotation by $\alpha \mod \mathbf{Z}$, which belongs to $T_\tau$. Moreover, by Lemma \ref{lemma: lift} it holds $f^2 \in G'$. Then
$$\scl(f) = \frac{\scl(f^2)}{2} = \frac{\rot(f^2)}{4} = \frac{\alpha}{2}.$$
\end{proof}

Our proof relied on Corollary \ref{cor:sclrot}, thus on Proposition \ref{prop:circle}. However, the full knowledge of the second bounded cohomology of $T_\tau$ was not necessary to carry over the proof. Indeed, it is possible to deduce the same result by only knowing that $\scl$ vanishes identically on $T_\tau$, which can be proven using the structural results about $T_\tau$ we presented here.
More details on this argument can be found in the preprint \cite{algebraicscl}. \\

We have given a concrete answer to Question \ref{qCalegari}, which gives new insight into the spectrum of $\scl$ on finitely presented groups. It is possible that our proof can be generalized to reach other \emph{metallic ratios}, i.e. quadratic irrationals of the form
$$\lambda := \frac{-n \pm \sqrt{n^2 + 4}}{2}.$$
Indeed, as noted by Cleary at the end of \cite{sqrt2}, most of the arguments establishing finite presentability appear to carry over to that setting.
On the other hand, it is not clear how to generalize the arguments to other algebraic integers. 
In \cite{cleary}, Cleary cites unpublished work generalizing the theory of regular subdivisions (carried over therein for the golden ratio) to algebraic rings. However, this work remains unpublished. Therefore we end by asking the following more specialized version of Question \ref{qCalegari}:

\begin{question}
Does every algebraic integer occur as the $\scl$ of a finitely presented group?
\end{question}

\section{Algebraic irrational simplicial volume}
\label{s:simvol}

Our final application concerns the spectrum of simplicial volume, the main result being Theorem \ref{thm:sv}. We start with the definition. For the rest of this section, all manifolds will be assumed to be oriented, closed and connected.

\begin{definition}
Let $M$ be an $n$-manifold, and let $[M] \in \HH_n(M; \mathbf{R})$ be its real fundamental class. The \emph{simplicial volume} of $M$ is defined as:
$$\|M\| := \inf \left\{ \sum |a_i| : \sum a_i \sigma_i \text{ is a cycle representing } [M] \right\}.$$
\end{definition}

This invariant was introduced by Gromov in the seminal paper \cite{gromov}, and despite being a homotopical invariant, it carries a surprising amount of geometric information \cite[Chapter 7]{BC}. However, there are very few cases in which the precise value of $M$ is known. The largest class of explicit examples is provided by the following theorem:

\begin{theorem}[Heuer--L\"oh \cite{spectrum1}]
\label{thm:spectrum}

Let $G$ be a finitely presented group such that $\HH_2(G; \mathbf{R}) = 0$, and let $g \in G'$. Then there exists a $4$-manifold $M$ such that $\|M\| = 48 \cdot \scl(g)$.
\end{theorem}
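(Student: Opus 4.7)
The plan is to reduce $\|M\|$ to an Euler–characteristic computation on an admissible surface realising $\scl(g)$, and then to realise such a surface inside a closed $4$-manifold via a product-and-fill construction that is efficient enough to match $\scl$ exactly on both sides.

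First, I would invoke the topological definition of $\scl$: since $g \in G'$, for every $\epsilon > 0$ there exist $n \geq 1$ and a compact oriented surface $\Sigma$ with connected boundary, together with a continuous map $\phi : \Sigma \to K(G,1)$ sending $\partial \Sigma$ to a loop representing $g^n$, such that
\[
\frac{-\chi(\Sigma)}{2n} < \scl(g) + \epsilon.
\]
Conversely, every such pair $(\Sigma,\phi)$ yields an upper bound of this form on $\scl(g)$, so we may work with $(\Sigma,\phi)$ as our efficient data. The finite presentability of $G$ is what allows $\phi$ to be replaced, up to homotopy, by a map into a finite $2$-complex $X$ with $\pi_1 X = G$.

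Next I would build the $4$-manifold $M$ from $(\Sigma,\phi)$. Roughly, the construction should be a surface-by-surface bundle or thickened product of $\Sigma$ with a closed reference surface, with the boundary $\partial \Sigma$ capped off by a piece whose existence is guaranteed by the commutator expression of $g^n$ inside $G'$. Here is where the hypothesis $\HH_2(G;\mathbf{R}) = 0$ enters: it ensures that once we have closed up $\partial \Sigma$, the resulting closed surface in $K(G,1)$ is rationally null-homologous, so it bounds a rational $3$-chain, and this $3$-chain can be used to guide a $4$-dimensional capping that keeps track of the fundamental class rather than merely producing a random $4$-manifold. The numerical factor $48$ will come from the product formula $\|\Sigma_1 \times \Sigma_2\| \leq 6 \,\|\Sigma_1\|\|\Sigma_2\|$ in dimension $4$ together with the factors of $2$ relating $\scl$ to $-\chi/n$ and the doubling/capping steps used to close the manifold.

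The upper bound $\|M\| \leq 48\,\scl(g)$ should then follow from plugging the surface into the product/fill construction and applying multiplicativity of the simplicial volume under products, together with the standard bound $\|\Sigma\| = -2\chi(\Sigma)$ for hyperbolic surfaces. The matching lower bound $\|M\| \geq 48\,\scl(g)$ is where the hypothesis $\HH_2(G;\mathbf{R}) = 0$ is used in a second, more delicate way: via the duality between $\|\cdot\|_1$ and the $\ell^\infty$-seminorm on bounded cohomology, the simplicial volume of $M$ is controlled from below by pairing the fundamental class with bounded cocycles on $G$, and the vanishing of ordinary $\HH_2(G;\mathbf{R})$ means that every class in $\HH^2_b(G;\mathbf{R})$ comes from a quasimorphism via the sequence of Theorem~\ref{thm:hbqm}; Bavard duality then converts this pairing into $\scl(g)$ times the appropriate combinatorial constant.

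The main obstacle will be the $4$-manifold construction in the middle step. One must simultaneously (i) produce a genuine smooth closed oriented $4$-manifold, (ii) control its simplicial volume from above by an explicit multiple of $-\chi(\Sigma)/n$, and (iii) keep enough information about the fundamental class to run the Bavard/duality argument giving the matching lower bound. Balancing all three while only using that $\HH_2(G;\mathbf{R})=0$ — rather than much stronger topological hypotheses on $G$ — is the delicate part, and is essentially the technical heart of Heuer and L\"oh's work in \cite{spectrum1}.
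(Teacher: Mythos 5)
A preliminary remark: the paper does not prove Theorem \ref{thm:spectrum} at all. It is quoted from Heuer--L\"oh \cite{spectrum1} and used as a black box in Section \ref{s:simvol}, so there is no internal proof to compare yours against; I can only measure your sketch against the actual argument of \cite{spectrum1}.

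Your outline has the right general shape (admissible surfaces computing $\scl$, a cross product with a reference surface, duality with bounded cohomology for the lower bound), but two load-bearing steps are wrong or missing. First, the hypothesis $\HH_2(G;\mathbf{R})=0$ is not used to make a closed-up surface ``bound a rational $3$-chain that guides a $4$-dimensional capping''; that step does not correspond to anything that can be carried out, and the manifold is not obtained by capping an admissible surface. In \cite{spectrum1} one instead works with an auxiliary space $X_g$ (a mapping cone of the loop in $K(G,1)$ representing $g$) whose second real homology becomes one-dimensional precisely because $g\in G'$ and $\HH_2(G;\mathbf{R})=0$, spanned by a class whose $\ell^1$-seminorm computes $\scl(g)$ up to a universal factor via Calegari's filling-norm description. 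One then takes the cross product with a surface class in degree $4$, realizes the resulting class by a map from a closed oriented $4$-manifold (Thom), and performs surgery so that the map becomes a $\pi_1$-isomorphism; since the $\ell^1$-seminorm only depends on the fundamental group (Theorem \ref{thm:mapping} plus duality), this forces $\|M\|$ to equal the seminorm of the product class, and the vanishing of $\HH_2(G;\mathbf{R})$ is what pins down, via K\"unneth, which class $[M]$ hits. Second, and more seriously, your lower bound does not close: the inequality $\|\alpha\times\beta\|\le 6\|\alpha\|\,\|\beta\|$ points in the wrong direction, and Bavard duality is a degree-$2$ statement that cannot by itself evaluate a degree-$4$ pairing. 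The matching lower bound is exactly a Bucher--Karlsson-type equality $\|\Sigma_1\times\Sigma_2\|=\tfrac32\|\Sigma_1\|\,\|\Sigma_2\|$ (and its relative analogue): one must exhibit an explicit bounded $4$-cocycle, built as a cup product of two degree-$2$ bounded classes --- one of which is the coboundary of the extremal quasimorphism for $g$ supplied by Bavard duality and Theorem \ref{thm:hbqm} --- and control its norm. That computation, not the manifold construction, is the analytic heart of \cite{spectrum1}, and it is the step your sketch replaces with a hand-wave.
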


This allows to prove that all rationals \cite{spectrum1}, as well as certain trascendental numbers \cite{spectrum2}, occur as the simplicial volume of a $4$-manifold. 

The spectrum of $\scl$ for finitely presented groups is better understood than the spectrum of simplicial volume. It was already known that all rationals \cite{ghys_serg} and certain trascendental numbers \cite{zhuang} are possible values in the former. However realizing these values in groups to which Theorem \ref{thm:spectrum} applies is harder: one needs to perform a universal (or almost universal) central extension to annihilate second homology, while ensuring that the value of $\scl$ is preserved along the way.

The following is a general setup in which this construction goes through:

\begin{proposition}[Heuer--L\"oh {\cite[Section 3.3]{spectrum2}}]
\label{prop:ExtensionSimplVol}

Let $G$ be a group of orientation-preserving homeomorphisms of the circle, and let $\overline{G}$ be its total lift to the real line. Suppose that $G$ is finitely presented, it admits no unbounded quasimorphisms, and $\HH^2_b(G)$ is one-dimensional, spanned by the Euler class. Then there exists a central extension $E$ of $G$ with the following properties:
\begin{enumerate}
    \item $E$ is finitely presented;
    \item $\HH_2(E; \mathbf{R}) = 0$;
    \item $\scl(E') = \scl(\overline{G}')$ as subset of $\mathbf{R}$.
    \end{enumerate}
\end{proposition}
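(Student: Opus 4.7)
The strategy is to realize $E$ as a multiple central extension of $G$ by a free abelian kernel $\mathbf{Z}^{n+1}$, with the extension class chosen so that one coordinate records the integral Euler class $e_{\mathbf{Z}}$ (which will give $E$ a surjection onto $\overline{G}$ with amenable kernel, securing property (3)), while the remaining $n$ coordinates supply a basis of $\HH^2(G; \mathbf{R})$ (so that $\HH_2(G; \mathbf{R})$ transgresses to zero in $E$). Since $G$ is finitely presented, $\HH_2(G; \mathbf{Z})$ is finitely generated; since $G^{ab}$ is finite, $\HH_1(G; \mathbf{Z})$ is finite torsion, so universal coefficients gives $\HH^2(G; \mathbf{R}) \cong \HH^2(G; \mathbf{Z}) \otimes \mathbf{R}$ of finite dimension, say $n$. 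I would then pick integral classes $\alpha_1, \ldots, \alpha_n \in \HH^2(G; \mathbf{Z})$ whose images form an $\mathbf{R}$-basis of $\HH^2(G; \mathbf{R})$, dual to some $\sigma_1, \ldots, \sigma_n \in \HH_2(G; \mathbf{R})$, and define $E$ to be the central extension of $G$ by $\mathbf{Z}^{n+1}$ classified by $(e_{\mathbf{Z}}, \alpha_1, \ldots, \alpha_n)$. Property (1), finite presentability of $E$, then follows from the standard fact that central extensions of finitely presented groups by finitely generated abelian groups are finitely presented.

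For property (2) I would analyze the Lyndon--Hochschild--Serre spectral sequence of the central extension $1 \to \mathbf{Z}^{n+1} \to E \to G \to 1$ with real coefficients, whose $E^2$-page is $E^2_{p,q} = \HH_p(G; \Lambda^q \mathbf{R}^{n+1})$. The three terms on the total-degree-$2$ diagonal are $E^\infty_{2,0}$, $E^\infty_{1,1}$ and $E^\infty_{0,2}$. The middle vanishes already on the $E^2$-page since $\HH_1(G; \mathbf{R}) = 0$. The transgression $d_2 \colon \HH_2(G; \mathbf{R}) \to \mathbf{R}^{n+1}$ is cap product with the extension class; by construction its last $n$ coordinates form the identity matrix under the chosen dual bases, so $d_2$ is injective and $E^\infty_{2,0} = 0$. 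The crux is showing $E^\infty_{0,2} = 0$, which the five-term exact sequence cannot detect: one must use the Pontryagin-product description of the differential $d_2 \colon \HH_2(G; \mathbf{R}^{n+1}) \to \Lambda^2 \mathbf{R}^{n+1}$ on higher rows, namely $\sigma \otimes v \mapsto (d_2 \sigma) \wedge v$. Taking $v = \epsilon_0$ (the Euler coordinate) shows the image contains every $\epsilon_j \wedge \epsilon_0$, and then by wedging with further basis vectors one recovers all of $\Lambda^2 \mathbf{R}^{n+1}$, making $d_2$ surjective. This Koszul-style computation of the spectral-sequence differentials on the rows above the transgression is the step where I expect the most care to be needed, since it goes beyond the standard five-term machinery.

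For property (3), the projection of the kernel $\mathbf{Z}^{n+1} \to \mathbf{Z}$ onto the Euler coordinate pushes the class $(e_{\mathbf{Z}}, \alpha_1, \ldots, \alpha_n)$ forward to $e_{\mathbf{Z}}$, realizing $E$ as a central extension $1 \to \mathbf{Z}^n \to E \to \overline{G} \to 1$ with amenable kernel. By Gromov's Mapping Theorem (Theorem \ref{thm:mapping}), the pullback $\HH^2_b(\overline{G}; \mathbf{R}) \to \HH^2_b(E; \mathbf{R})$ is an isomorphism, and in particular the rotation quasimorphism on $\overline{G}$ pulls back to a quasimorphism on $E$ of the same defect. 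Since commutators lift through any surjection, the induced map $E' \twoheadrightarrow \overline{G}'$ is surjective, and combining Bavard duality (Theorem \ref{thm:bavard}) with the fact that every homogeneous quasimorphism on $E$ is, modulo homomorphisms, the pullback of one on $\overline{G}$, yields $\scl_E(\tilde g) = \scl_{\overline{G}}(\pi(\tilde g))$ for every $\tilde g \in E'$. Hence $\scl(E') = \scl(\overline{G}')$ as subsets of $\mathbf{R}$.
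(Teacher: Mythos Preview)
The paper does not supply its own proof of this proposition: it is stated with attribution to Heuer--L\"oh \cite{spectrum2} and used as a black box. So there is no in-paper argument to compare your attempt against.

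That said, your sketch is essentially correct and is the natural construction. A few remarks. For property~(2), the formula $d_2(\sigma\otimes v)=(d_2\sigma)\wedge v$ you invoke is precisely the multiplicative (Koszul) structure of the Lyndon--Hochschild--Serre spectral sequence for a central extension; this is standard and justifies the step you flag as delicate. Your surjectivity argument is fine: the image of the transgression $d_2\colon \HH_2(G;\mathbf{R})\to\mathbf{R}^{n+1}$ is an $n$-dimensional subspace $W$ (the graph of the linear map expressing the real Euler class in the basis $\alpha_1,\dots,\alpha_n$), and $W\wedge\mathbf{R}^{n+1}=\Lambda^2\mathbf{R}^{n+1}$ whenever $W$ has codimension at most~$1$. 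For property~(3), the assertion that every homogeneous quasimorphism on $E$ is a pullback from $\overline{G}$ plus a homomorphism is not automatic for arbitrary amenable quotients, but here it follows because $\HH^2_b(E)\cong\HH^2_b(\overline{G})$ is one-dimensional and spanned by the pullback of $[\delta^1\rot]$; combined with the exact sequence of Theorem~\ref{thm:hbqm} this gives exactly what you need. Since $\pi\colon E\to\overline{G}$ is surjective, the defect is preserved under pullback, and since homomorphisms vanish on $E'$, Bavard duality yields $\scl_E(\tilde g)=\scl_{\overline G}(\pi(\tilde g))$. Together with the surjectivity of $E'\to\overline{G}'$ this gives the equality of spectra.
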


\begin{remark}
\label{rem:checkingHL}

Since $G$ is finitely presented, the hypothesis that it has no unbounded quasimorphisms can be replaced by: $G$ has finite abelianization, and the Euler class is not exact, i.e. its image in $\HH^2(G)$ is non-trivial. Using again finite presentability and the universal coefficient theorem, the latter condition is equivalent to the fact that the central extension $1 \to \mathbf{Z} \to \overline{G} \to G \to 1$ does not virtually split.
\end{remark}

In particular this theorem applies to the groups we considered in Theorem \ref{thm:T}. We immediately obtain:

\begin{corollary}
\label{cor:sv}

Let $G$ be as in Theorem \ref{thm:T}. Suppose that $G$ is finitely presented and has finite abelianization. Then for every $g \in \overline{G}'$ there exists a $4$-manifold $M$ such that $\|M\| = 48 \cdot \scl(g)$.
\end{corollary}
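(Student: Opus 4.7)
The plan is to chain together the three key results already at our disposal: Theorem \ref{thm:T} (which supplies the bounded cohomology information), Proposition \ref{prop:ExtensionSimplVol} (which converts that information into a well-behaved central extension), and Theorem \ref{thm:spectrum} (which realizes values of $\scl$ in such extensions as simplicial volumes of $4$-manifolds).

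First I would verify that $G$ meets the hypotheses of Proposition \ref{prop:ExtensionSimplVol}. By assumption $G$ is a finitely presented group of orientation-preserving piecewise linear (or piecewise projective) homeomorphisms of the circle with an orbit that is doubly and triply transitive in the circular order, and has finite abelianization. Theorem \ref{thm:T} immediately gives that $\HH^2_b(G; \mathbf{R})$ is one-dimensional, spanned by the real Euler class. This is precisely the hypothesis needed. Proposition \ref{prop:ExtensionSimplVol} then produces a central extension $E$ of $G$ which is finitely presented, satisfies $\HH_2(E; \mathbf{R}) = 0$, and for which $\scl(E') = \scl(\overline{G}')$ as subsets of $\mathbf{R}$.

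Next, given any $g \in \overline{G}'$, set $r := \scl(g) \in \scl(\overline{G}')$. By the equality of scl spectra from Proposition \ref{prop:ExtensionSimplVol}, there exists $h \in E'$ with $\scl(h) = r$. Since $E$ is finitely presented and $\HH_2(E; \mathbf{R}) = 0$, Theorem \ref{thm:spectrum} applies to the pair $(E, h)$ and produces an oriented, closed, connected $4$-manifold $M$ with
\[
\|M\| = 48 \cdot \scl(h) = 48 \cdot r = 48 \cdot \scl(g),
\]
which is exactly the desired conclusion.

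In this argument there is essentially no obstacle beyond bookkeeping: all the heavy lifting has been done by the three results being invoked. The only thing to watch is the matching of hypotheses, namely that ``$\HH^2_b(G; \mathbf{R})$ one-dimensional spanned by the Euler class'' in Proposition \ref{prop:ExtensionSimplVol} is exactly the conclusion of Theorem \ref{thm:T}, and that Theorem \ref{thm:spectrum} is stated for finitely presented groups with vanishing second real homology, which is precisely what the central extension $E$ provides. The role of taking $g$ in $\overline{G}'$ rather than $G'$ is crucial and matches the statement of Proposition \ref{prop:ExtensionSimplVol}, which compares $\scl(E')$ to $\scl(\overline{G}')$ rather than to $\scl(G')$.
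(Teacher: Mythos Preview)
Your proof is correct and is exactly the argument the paper has in mind: the corollary is stated immediately after Proposition~\ref{prop:ExtensionSimplVol} with the phrase ``We immediately obtain,'' and your write-up simply spells out that immediacy by chaining Theorem~\ref{thm:T}, Proposition~\ref{prop:ExtensionSimplVol}, and Theorem~\ref{thm:spectrum}.
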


Theorem \ref{thm:sv} is a special case of this corollary:

\begin{proof}[Proof of Theorem \ref{thm:sv}]
Let $G = T_\tau$ be the golden ratio Thompson group. We know that $G$ is finitely presented (indeed type $F_{\infty}$, by Proposition \ref{prop:Tfin}) and has finite abelianization (from Proposition \ref{prop:index2T}). 
By Lemma \ref{lem:transT} and Theorem \ref{thm:T}, we know that $\HH^2_b(G)$ is one-dimensional and is spanned by the Euler class. Finally, the Euler class is not exact, since the central extension $1 \to \mathbf{Z} \to \overline{G} \to G \to 1$ does not virtually split, by Lemma \ref{lemma: lift}; thus the conditions of Proposition \ref{prop:ExtensionSimplVol} are satisfied (alternatively, one can show that $G'$ is uniformly perfect, and thus $G$ has no unbounded quasimorphisms \cite{algebraicscl}).

Moreover, $\scl(\overline{G}')$ contains all of $\mathbf{Z}[\tau]$, where $\tau = \frac{\sqrt{5}-1}{2}$ is the small golden ratio (Theorem \ref{thm:scl2}). Therefore by Proposition \ref{prop:ExtensionSimplVol}, there exists a $4$-manifold $M$ such that $\|M\| = 48 \cdot \tau$, which is algebraic and irrational.
Indeed, for each $\lambda \in \mathbf{Z}[\tau]$, we obtain a $4$-manifold $M$ such that $\|M\| = 48 \cdot \lambda$.
\end{proof}

In the same way, we obtain new transcendental values in the spectrum of simplicial volume:

\begin{example}
Stein--Thompson groups to which Theorem \ref{thm:T} applies (see \cite{monsters}) produce irrational values of stable commutator length in the same way \cite{zhuang}. For example, if $G = T_{2, 3}$, then $G$ is finitely presented, has no unbounded quasimorphisms, and there exist elements in its lift whose $\scl$ equals $\frac{\log(3)}{\log(2)}$. Then Corollary \ref{cor:sv} produces a $4$-manifold such that $\|M\| = 48 \cdot \frac{\log(3)}{\log(2)}$.
This confirms a conditional result of Heuer and L\"oh \cite[Corollary A.12]{spectrumv1}.
\end{example}

More generally, Corollary \ref{cor:sv} provides a way to translate many concrete examples of values of $\scl$ to values of simplicial volume. We hope that this approach will be useful for showing that the full spectra of $\scl$ of finitely presented groups and simplicial volume of $4$-manifolds coincide. It is conjectured \cite[Question 1.1]{spectrum2} that both spectra coincide with the set of right-computable numbers: this is known to be the full spectrum of $\scl$ of recursively presented groups \cite{heuer}.


\footnotesize

\bibliographystyle{abbrv}
\bibliography{references}

\begin{thebibliography}{10}

\bibitem{alonso}
J.~M. Alonso.
\newblock Finiteness conditions on groups and quasi-isometries.
\newblock {\em J. Pure Appl. Algebra}, 95(2):121--129, 1994.

\bibitem{bavard}
C.~Bavard.
\newblock Longueur stable des commutateurs.
\newblock {\em Enseign. Math. (2)}, 37(1-2):109--150, 1991.

\bibitem{Belk}
J.~M. Belk.
\newblock {\em Thompsons' group {F}}.
\newblock ProQuest LLC, Ann Arbor, MI, 2004.
\newblock Thesis (Ph.D.)--Cornell University.

\bibitem{mcg}
M.~Bestvina and K.~Fujiwara.
\newblock Bounded cohomology of subgroups of mapping class groups.
\newblock {\em Geom. Topol.}, 6:69--89, 2002.

\bibitem{subgroups3}
C.~Bleak, M.~G. Brin, and J.~T. Moore.
\newblock Complexity among the finitely generated subgroups of {T}hompson's
  group.
\newblock {\em J. Comb. Algebra}, 5(1):1--58, 2021.

\bibitem{bouarich}
A.~Bouarich.
\newblock Suites exactes en cohomologie born\'{e}e r\'{e}elle des groupes
  discrets.
\newblock {\em C. R. Acad. Sci. Paris S\'{e}r. I Math.}, 320(11):1355--1359,
  1995.

\bibitem{BrinSquier}
M.~G. Brin and C.~C. Squier.
\newblock Groups of piecewise linear homeomorphisms of the real line.
\newblock {\em Invent. Math.}, 79(3):485--498, 1985.

\bibitem{brown_book}
K.~S. Brown.
\newblock {\em Cohomology of groups}, volume~87 of {\em Graduate Texts in
  Mathematics}.
\newblock Springer-Verlag, New York-Berlin, 1982.

\bibitem{brown_fp}
K.~S. Brown.
\newblock Finiteness properties of groups.
\newblock {\em J. Pure Appl. Algebra}, 44(1-3):45--75, 1987.

\bibitem{burger}
M.~Burger.
\newblock An extension criterion for lattice actions on the circle.
\newblock In {\em Geometry, rigidity, and group actions}, Chicago Lectures in
  Math., pages 3--31. Univ. Chicago Press, Chicago, IL, 2011.

\bibitem{hermitian1}
M.~Burger and A.~Iozzi.
\newblock Bounded {K}\"{a}hler class rigidity of actions on {H}ermitian
  symmetric spaces.
\newblock {\em Ann. Sci. \'{E}cole Norm. Sup. (4)}, 37(1):77--103, 2004.

\bibitem{hermitian2}
M.~Burger, A.~Iozzi, and A.~Wienhard.
\newblock Hermitian symmetric spaces and {K}\"{a}hler rigidity.
\newblock {\em Transform. Groups}, 12(1):5--32, 2007.

\bibitem{lattices}
M.~Burger and N.~Monod.
\newblock Bounded cohomology of lattices in higher rank {L}ie groups.
\newblock {\em J. Eur. Math. Soc. (JEMS)}, 1(2):199--235, 1999.

\bibitem{rigidity}
M.~Burger and N.~Monod.
\newblock Continuous bounded cohomology and applications to rigidity theory.
\newblock {\em Geom. Funct. Anal.}, 12(2):219--280, 2002.

\bibitem{burilloF}
J.~Burillo, B.~Nucinkis, and L.~Reeves.
\newblock An irrational-slope {T}hompson's group.
\newblock {\em Publ. Mat.}, 65:809--839, 2021.

\bibitem{burilloT}
J.~Burillo, B.~Nucinkis, and L.~Reeves.
\newblock Irrational-slope versions of {T}hompson's groups {$T$} and {$V$}.
\newblock {\em Proc. Edinb. Math. Soc. (2)}, 65(1):244--262, 2022.

\bibitem{scl_pl}
D.~Calegari.
\newblock Stable commutator length in subgroups of {${\rm PL}^+(I)$}.
\newblock {\em Pacific J. Math.}, 232(2):257--262, 2007.

\bibitem{calegari}
D.~Calegari.
\newblock {\em scl}, volume~20 of {\em MSJ Memoirs}.
\newblock Mathematical Society of Japan, Tokyo, 2009.

\bibitem{free}
D.~Calegari.
\newblock Stable commutator length is rational in free groups.
\newblock {\em J. Amer. Math. Soc.}, 22(4):941--961, 2009.

\bibitem{CFP}
J.~W. Cannon, W.~J. Floyd, and W.~R. Parry.
\newblock Introductory notes on {R}ichard {T}hompson's groups.
\newblock {\em Enseign. Math. (2)}, 42(3-4):215--256, 1996.

\bibitem{gog}
L.~Chen.
\newblock Scl in graphs of groups.
\newblock {\em Invent. Math.}, 221(2):329--396, 2020.

\bibitem{sqrt2}
S.~Cleary.
\newblock Groups of piecewise-linear homeomorphisms with irrational slopes.
\newblock {\em Rocky Mountain J. Math.}, 25(3):935--955, 1995.

\bibitem{cleary}
S.~Cleary.
\newblock Regular subdivision in {$\bold Z[\frac{1+\sqrt 5}{2}]$}.
\newblock {\em Illinois J. Math.}, 44(3):453--464, 2000.

\bibitem{orders}
B.~Deroin, A.~Navas, and C.~Rivas.
\newblock Groups, orders, and dynamics.
\newblock {\em arXiv preprint arXiv:1408.5805}, 2014.

\bibitem{algebraicscl}
F.~Fournier-Facio and Y.~Lodha.
\newblock Algebraic irrational stable commutator length in finitely presented
  groups.
\newblock {\em arXiv preprint arXiv:2110.06286}, 2021.

\bibitem{our}
F.~Fournier-Facio and Y.~Lodha.
\newblock Finitely generated simple left orderable groups with vanishing second
  bounded cohomology.
\newblock {\em arXiv preprint arXiv:2112.04458}, 2021.

\bibitem{monsters}
F.~Fournier-Facio, Y.~Lodha, and M.~C.~B. Zaremsky.
\newblock Finitely presented left orderable monsters.
\newblock {\em arXiv preprint arXiv:2211.05268}, 2022.

\bibitem{bV}
F.~Fournier-Facio, Y.~Lodha, and M.~C.~B. Zaremsky.
\newblock Braided {T}hompson groups with and without quasimorphisms.
\newblock {\em Algebr. Geom. Topol.}, To appear. arXiv:2204.05272, 2022.

\bibitem{bcbinate}
F.~Fournier-Facio, C.~L{\"o}h, and M.~Moraschini.
\newblock Bounded cohomology and binate groups.
\newblock {\em J. Aust. Math. Soc.}, To appear. arXiv:2111.04305, 2021.

\bibitem{BC}
R.~Frigerio.
\newblock {\em Bounded cohomology of discrete groups}, volume 227 of {\em
  Mathematical Surveys and Monographs}.
\newblock American Mathematical Society, Providence, RI, 2017.

\bibitem{prox}
S.~a.~R. Gal and J.~Gismatullin.
\newblock Uniform simplicity of groups with proximal action.
\newblock {\em Trans. Amer. Math. Soc. Ser. B}, 4:110--130, 2017.
\newblock With an appendix by Nir Lazarovich.

\bibitem{gersten}
S.~M. Gersten.
\newblock Bounded cocycles and combings of groups.
\newblock {\em Internat. J. Algebra Comput.}, 2(3):307--326, 1992.

\bibitem{ghys}
E.~Ghys.
\newblock Groupes d'hom\'{e}omorphismes du cercle et cohomologie born\'{e}e.
\newblock In {\em The {L}efschetz centennial conference, {P}art {III} ({M}exico
  {C}ity, 1984)}, volume~58 of {\em Contemp. Math.}, pages 81--106. Amer. Math.
  Soc., Providence, RI, 1987.

\bibitem{ghys_serg}
E.~Ghys and V.~Sergiescu.
\newblock Sur un groupe remarquable de diff\'{e}omorphismes du cercle.
\newblock {\em Comment. Math. Helv.}, 62(2):185--239, 1987.

\bibitem{subgroups2}
G.~Golan and M.~Sapir.
\newblock On subgroups of {R}. {T}hompson's group {$F$}.
\newblock {\em Trans. Amer. Math. Soc.}, 369(12):8857--8878, 2017.

\bibitem{gromov}
M.~Gromov.
\newblock Volume and bounded cohomology.
\newblock {\em Inst.\ Hautes {\'E}tudes Sci.\ Publ.\ Math.}, 56:5--99 (1983),
  1982.

\bibitem{subgroups1}
V.~S. Guba and M.~V. Sapir.
\newblock On subgroups of the {R}. {T}hompson group {$F$} and other diagram
  groups.
\newblock {\em Mat. Sb.}, 190(8):3--60, 1999.

\bibitem{book}
N.~Heuer.
\newblock Stable commutator length.
\newblock In {\em Bounded cohomology and simplcial volume}, volume 479 of {\em
  London Math. Soc. Lecture Note Ser.} Cambridge University Press, 2022.

\bibitem{heuer}
N.~Heuer.
\newblock The full spectrum of scl on recursively presented groups.
\newblock {\em Geom. Dedicata}, To appear. arXiv:1909.01309, 2019.

\bibitem{spectrumv1}
N.~Heuer and C.~L{\"o}h.
\newblock The spectrum of simplicial volume.
\newblock {\em arXiv preprint arXiv:1904.04539v1}, 2019.

\bibitem{spectrum1}
N.~Heuer and C.~L\"{o}h.
\newblock The spectrum of simplicial volume.
\newblock {\em Invent. Math.}, 223(1):103--148, 2021.

\bibitem{spectrum2}
N.~Heuer and C.~L{\"o}h.
\newblock Transcendental simplicial volumes.
\newblock {\em Ann. Inst. Fourier (Grenoble)}, To appear. arXiv:1911.06386,
  2019.

\bibitem{grho}
J.~Hyde and Y.~Lodha.
\newblock Finitely generated infinite simple groups of homeomorphisms of the
  real line.
\newblock {\em Invent. Math.}, 218(1):83--112, 2019.

\bibitem{uniformlyperfect}
J.~Hyde, Y.~Lodha, A.~Navas, and C.~Rivas.
\newblock Uniformly perfect finitely generated simple left orderable groups.
\newblock {\em Ergodic Theory Dynam. Systems}, 41(2):534--552, 2021.

\bibitem{johnson}
B.~E. Johnson.
\newblock {\em Cohomology in {B}anach algebras}.
\newblock Memoirs of the American Mathematical Society, No. 127. American
  Mathematical Society, Providence, R.I., 1972.

\bibitem{ds}
A.~Katok and B.~Hasselblatt.
\newblock {\em Introduction to the modern theory of dynamical systems},
  volume~54 of {\em Encyclopedia of Mathematics and its Applications}.
\newblock Cambridge University Press, Cambridge, 1995.
\newblock With a supplementary chapter by Katok and Leonardo Mendoza.

\bibitem{chain}
S.-h. Kim, T.~Koberda, and Y.~Lodha.
\newblock Chain groups of homeomorphisms of the interval.
\newblock {\em Ann. Sci. \'{E}c. Norm. Sup\'{e}r. (4)}, 52(4):797--820, 2019.

\bibitem{kotschick}
D.~Kotschick.
\newblock Stable length in stable groups.
\newblock In {\em Groups of diffeomorphisms}, volume~52 of {\em Adv. Stud. Pure
  Math.}, pages 401--413. Math. Soc. Japan, Tokyo, 2008.

\bibitem{LBMB}
A.~Le~Boudec and N.~Matte~Bon.
\newblock Confined subgroups and high transitivity.
\newblock {\em Ann. H. Lebesgue}, 5:491--522, 2022.

\bibitem{lodhaS}
Y.~Lodha.
\newblock A finitely presented infinite simple group of homeomorphisms of the
  circle.
\newblock {\em J. Lond. Math. Soc. (2)}, 100(3):1034--1064, 2019.

\bibitem{coherent}
Y.~Lodha.
\newblock Coherent actions by homeomorphisms on the real line or an interval.
\newblock {\em Israel J. Math.}, 235(1):183--212, 2020.

\bibitem{Lodha}
Y.~Lodha.
\newblock A nonamenable type {$\rm F_\infty$} group of piecewise projective
  homeomorphisms.
\newblock {\em J. Topol.}, 13(4):1767--1838, 2020.

\bibitem{LodhaMoore}
Y.~Lodha and J.~T. Moore.
\newblock A nonamenable finitely presented group of piecewise projective
  homeomorphisms.
\newblock {\em Groups Geom. Dyn.}, 10(1):177--200, 2016.

\bibitem{Mather}
J.~N. Mather.
\newblock The vanishing of the homology of certain groups of homeomorphisms.
\newblock {\em Topology}, 10:297--298, 1971.

\bibitem{matsumoto}
S.~Matsumoto.
\newblock Numerical invariants for semiconjugacy of homeomorphisms of the
  circle.
\newblock {\em Proc. Amer. Math. Soc.}, 98(1):163--168, 1986.

\bibitem{MM}
S.~Matsumoto and S.~Morita.
\newblock Bounded cohomology of certain groups of homeomorphisms.
\newblock {\em Proc. Amer. Math. Soc.}, 94(3):539--544, 1985.

\bibitem{MBT}
N.~Matte~Bon and M.~Triestino.
\newblock Groups of piecewise linear homeomorphisms of flows.
\newblock {\em Compos. Math.}, 156(8):1595--1622, 2020.

\bibitem{monod}
N.~Monod.
\newblock {\em Continuous bounded cohomology of locally compact groups}, volume
  1758 of {\em Lecture Notes in Mathematics}.
\newblock Springer-Verlag, Berlin, 2001.

\bibitem{lamplighters}
N.~Monod.
\newblock Lamplighters and the bounded cohomology of {T}hompson's group.
\newblock {\em Geom. Funct. Anal.}, 32(3):662--675, 2022.

\bibitem{MonodNariman}
N.~Monod and S.~Nariman.
\newblock Bounded and unbounded cohomology of homeomorphism and diffeomorphism
  groups.
\newblock {\em Invent. Math.}, 232(3):1439--1475, 2023.

\bibitem{coamenable}
N.~Monod and S.~Popa.
\newblock On co-amenability for groups and von {N}eumann algebras.
\newblock {\em C. R. Math. Acad. Sci. Soc. R. Can.}, 25(3):82--87, 2003.

\bibitem{bac}
M.~Moraschini and G.~Raptis.
\newblock Amenability and acyclicity in bounded cohomology theory.
\newblock {\em Rev. Mat. Iberoam.}, To appear. arXiv:2105.02821, 2021.

\bibitem{questions}
A.~Navas.
\newblock Group actions on 1-manifolds: a list of very concrete open questions.
\newblock In {\em Proceedings of the {I}nternational {C}ongress of
  {M}athematicians---{R}io de {J}aneiro 2018. {V}ol. {III}. {I}nvited
  lectures}, pages 2035--2062. World Sci. Publ., Hackensack, NJ, 2018.

\bibitem{WM}
D.~Witte-Morris.
\newblock Amenable groups that act on the line.
\newblock {\em Algebr. Geom. Topol.}, 6:2509--2518, 2006.

\bibitem{zhuang}
D.~Zhuang.
\newblock Irrational stable commutator length in finitely presented groups.
\newblock {\em J. Mod. Dyn.}, 2(3):499--507, 2008.

\end{thebibliography}

\normalsize

\noindent{\textsc{Department of Mathematics, ETH Z\"urich, Switzerland}}

\noindent{\textit{E-mail address:} \texttt{francesco.fournier@math.ethz.ch}} \\

\noindent{\textsc{Department of Mathematics,
University of \Hawaii at \Manoa.}}

\noindent{\textit{E-mail address:} \texttt{lodha@hawaii.edu}} \\

\end{document}